\newtheorem{remark}[theorem]{Remark}
\newtheorem{example}{Example}
\newsavebox{\savepar}
\newcommand{\NN}{\mathbb{N}}
\newcommand{\RR}{\mathbb{R}}
\newcommand{\CC}{\mathbb{C}}
\newcommand{\Hb}{\mathrm{HKCF}}
\newcommand{\cK}{{\cal K}}
\newcommand{\cA}{{\cal A}}
\newcommand{\cD}{{\cal D}}
\newcommand{\cM}{{\cal M}}
\newcommand{\cS}{{\cal S}}
\newcommand{\cH}{{\cal H}}
\newcommand{\cP}{{\cal P}}
\newcommand{\cR}{{\cal R}}
\newcommand{\cL}{{\cal L}}
\newcommand{\cJ}{{\cal J}}
\newcommand{\la}{\lambda}
\newcommand{\orb}{{\cal O}}
\newcommand{\bun}{{\cal B}}
\newcommand {\mat}  [1] {\left[\begin{array}{#1}}
\newcommand {\rix}      {\end{array}\right]}
\DeclareMathOperator{\tr}{trace}
\DeclareMathOperator{\pen}{PENCIL}
\DeclareMathOperator{\codim}{codim}
\DeclareMathOperator{\syst}{syst}
\newcounter{algo}[section]
\DeclareMathOperator{\im}{im}
\newcommand{\C}{\mathbb{C}}
\title{Generic eigenstructures of Hermitian pencils}
\author{Fernando De Ter\'{a}n\thanks{Departamento de Matem\'aticas, Universidad Carlos III de Madrid, Avda. Universidad 30, 28911 Legan\'es, Spain. {\tt fteran@math.uc3m.es}} \and Andrii Dmytryshyn\thanks{School of Science and Technology, \"{O}rebro University, 701 82, \"{O}rebro, Sweden. {\tt andrii.dmytryshyn@oru.se}}\and Froil\'an M. Dopico\thanks{Departamento de Matem\'aticas, Universidad Carlos III de Madrid, Avda. Universidad 30, 28911 Legan\'es, Spain. {\tt dopico@math.uc3m.es}}}
\begin{document}

\date{\today}

\maketitle

\begin{abstract}
  We obtain the generic complete eigenstructures of complex Hermitian $n\times n$ matrix pencils with rank at most $r$ (with $r\leq n$). To do this, we prove that the set of such pencils is the union of a finite number of bundle closures, where each bundle is the set of complex Hermitian $n\times n$ pencils with the same complete eigenstructure (up to the specific values of the finite eigenvalues). We also obtain the explicit number of such bundles and their codimension. The cases $r=n$, corresponding to general Hermitian pencils, and $r<n$ exhibit surprising differences, since for $r<n$ the generic complete eigenstructures can contain only real eigenvalues, while for $r=n$ they can contain real and non-real eigenvalues. Moreover, we will see that the sign characteristic of the real eigenvalues plays a relevant role for determining the generic eigenstructures of Hermitian pencils.
\end{abstract}

\begin{keywords}
Matrix pencil, rank, strict equivalence, congruence, Hermitian matrix pencil, orbit, bundle, closure, sign characteristic.
\end{keywords}

\begin{AMS}
15A22, 15A18, 15A21, 15A54.
\end{AMS}




\section{Introduction}

The {\em complete eigenstructure} of a matrix pencil (or just a ``pencil", for short) is an intrinsic information of the pencil that is relevant in many of the applied problems where matrix pencils (or, more in general, matrix polynomials and rational matrices) arise (see, for instance,  \cite{DeKa87,DeKa88,DeKa93,MaMT15,Van79,Van81} and the references therein). More precisely, the complete eigenstructure of a pencil is the set of invariants of the pencil under {\em strict equivalence}, and it is encoded in the {\em Kronecker canonical form} (see \cite[Ch. XII]{Gant59} or \cite[\S 3]{LaRo05} for a more recent reference). In many applications where matrix pencils arise (either by themselves or by means of {\em linearizations} of matrix polynomials and rational matrices) the coefficient matrices have some particular symmetries, which lead to {\em structured matrix pencils}. These include {\em (skew-)symmetric}, {\em (skew-)Hermitian}, {\em (anti-)palindromic}, or {\em alternating} matrix pencils (see, for instance, \cite{dopico2021strongly,MMMM06b,MaMT15}).

The problem addressed in the present work is an instance of the general problem of determining the {\em most likely} complete eigenstructure of matrix pencils within some particular set, $\cS$. We use the word {\em generic} for the most likely complete eigenstructure, to mean that all pencils within the set $\cS$ are in the closure of the set of pencils having the generic eigenstructure. In other words, in every neighborhood of any particular pencil in $\cS$ there is at least one pencil having the generic eigenstructure. The generic complete eigenstructure of general $n\times n$ pencils consist, as it is well-known, of $n$ different simple eigenvalues. However, when some restrictions are imposed to the pencils, so that we restrict ourselves to pencils in some particular set $\bf\cS$, then it is not, in general, so easy to identify the generic complete eigenstructure (for instance, it is not trivial to obtain the generic complete eigenstructure of general $m\times n$ pencils when $m\neq n$, see below). As a consequence, the problem of describing the generic complete eigenstructure of matrix pencils within a particular set has attracted the attention of researchers for several decades. The research on this problem has allowed to describe the generic eigenstructure for the following sets of matrix pencils:
\begin{itemize}
    \item Singular $n\times n$ (namely, square) pencils \cite{Wate84}.
    \item Rectangular pencils of a fixed size \cite{DeEd95} (though the credit of the result, as mentioned in \cite[p. 85]{DeEd95}, goes back to, at least, \cite{VanD79}).
    \item General $m\times n$ pencils with rank at most $r$ (smaller than $\min\{m,n\}$) \cite{DeDo08} (revisited in \cite{DeDL17}).
    \item Palindromic and alternating $n\times n$ pencils with rank at most $r$ (smaller than $n$) \cite{DeTe17}.
    \item Complex symmetric $n\times n$ pencils with rank at most $r$ (smaller than $n$) \cite{ddd20}.
    \item Complex skew-symmetric $n\times n$ pencils with rank at most $2r$ (smaller than $n$) \cite{DmDo18}.
\end{itemize}
If we remove the restriction for the pencils being of bounded rank, then the generic complete eigenstructure of structured $n\times n$ pencils is also known for the following structures (though, up to our knowledge, some of them are not explicitly provided in the literature):
\begin{itemize}
    \item Complex symmetric $n\times n$ pencils: it is the same as for general (non-structured) matrix pencils, namely $n$ different eigenvalues. A way to see this is the following. Consider the set of $n\times n$ symmetric pencils as a manifold depending on $n(n+1)$  complex variables, encoded in a vector $X$ (these variables come from the upper triangular part, including the main diagonal, of the leading and the trailing coefficient matrices of the pencil). Assume that $f(X,\lambda)=\det\cP(X,\lambda)=\sum_{i=0}^n p_i(X)\lambda^{i}$ is the determinant of a general $n\times n$ symmetric pencil, $\cP(X,\la)$. Then, the subset of singular symmetric pencils is a proper algebraic set of $\CC^{n(n+1)}$, defined by the polynomial equations $p_0(X)=\cdots=p_n(X)=0$. Second, the subset of pencils with a multiple eigenvalue is also an algebraic set, namely Res($f(X,\lambda),f'(X,\lambda))=0$, where $f'(X,\lambda)$ is the derivative of $f(X,\la)$ with respect to the variable $\la$, and Res denotes the resultant (which is a polynomial in $X$) \cite[Ch. I, \S3]{macaulay}. Therefore, the set of $n\times n$ symmetric pencils with $n$ different eigenvalues is the complementary of the union of two algebraic sets. Since it is also nonempty (for instance, $\diag(\la-1,\la-2,\hdots,\la-n)$ is such a pencil), we conclude that it is a generic set.
    \item Complex skew-symmetric $n\times n$ pencils: we have been unable to find an explicit expression for the generic complete eigenstructures for this structure. However, it can be deduced from the canonical form under congruence of skew-symmetric pencils in \cite{Thom91} and the developments in \cite{DmDo18}. More precisely, it depends on whether $n$ is even or odd. If $n$ is even, then the generic canonical form consists of $n/2$ skew-symmetric Jordan blocks associated with different eigenvalues of multiplicity $2$, whereas if $n$ is odd we add to the previous structure a $0$ diagonal block. In other words, the generic complete eigenstructure consists of $n/2$ distinct eigenvalues of multiplicity exactly $2$ (if $n$ is even), together with one left and one right minimal indices equal to $0$ (when $n$ is odd).
    \item $\top$-palindromic pencils: the generic complete eigenstructure is also different depending on whether $n$ is even or odd \cite[Th. 6]{DeDo11}. More precisely, it consists of $n/2$ pairs of different simple complex values of the form $(\mu,1/\mu)$ (if $n$ is even), together with a simple eigenvalue $-1$ (when $n$ is odd). For $\top$-anti-palindromic pencils the eigenvalue $-1$ is replaced by $1$.  For $*$-palindromic pencils the generic complete eigenstructure can be found in \cite[Th. 5.4]{de2011equation} and it consists of $n/2$ pairs of different simple complex values of the form $(\mu,1/\overline{\mu})$ with $|\mu|>1$ (if $n$ is even), together with a simple eigenvalue which is an unspecified complex number of modulus $1$ (when $n$ is odd).
    \item $\top$-alternating pencils: in this case, the generic complete eigenstructure (that can be obtained from the one of $\top$-palindromic pencils by means of a Cayley transformation, see \cite{DeTe17}) consists of $n/2$ pairs of different simple complex values of the form $(\mu,-\mu)$, if $n$ is even, together with a simple eigenvalue $\infty$ (for $\top$-even pencils) or $0$ (for $\top$-odd pencils), when $n$ is odd. For $*$-alternating pencils, the pairs $(\mu,-\mu)$ are replaced by $(\mu,-\overline\mu)$.
\end{itemize}
In the present work, we describe the generic complete eigenstructure of Hermitian $n\times n$ matrix pencils with rank at most $r$, for any $0\leq r\leq n$. When $r=n$, what we obtain is the generic complete eigenstructure of general $n\times n$ Hermitian pencils (without restrictions), and for this reason this case is addressed separately. We will prove (in Theorems \ref{regular_th} and \ref{main_th}) that the number of generic complete eigenstructures in the set of $n\times n$ Hermitian pencils with rank at most $r$ is equal to $\left(\left\lfloor\frac{r}{2}\right\rfloor+1\right)\left\lfloor\frac{r+3}{2}\right\rfloor$. However, there are relevant differences between the case $r=n$ and $r<n$, namely:
\begin{itemize}
    \item When $r=n$, all generic eigenstructures correspond to regular pencils having $n$ simple eigenvalues. Some of these eigenvalues are real, and the other ones are pairs of non-real complex conjugate numbers. Only in one of these eigenstructures (namely, when all eigenvalues are real) there are no non-real eigenvalues.
    \item When $r < n$, however, none of the generic eigenstructures have non-real eigenvalues.
\end{itemize}
In both cases, each of the generic eigenstructures differs from the others in the number of real eigenvalues, and in the sign characteristics of these eigenvalues. This emphasizes the relevance of the sign characteristic for Hermitian pencils, which is a quantity that does not arise in the other structures mentioned above.

It is worth to emphasize also that the number of generic complete eigenstructures of $n\times n$ Hermitian pencils with rank at most $r$ is always greater than $1$, so there is no a unique generic eigenstructure. In fact, there are many when $r$ is large. This should not be surprising when $r<n$ since it happens also for general square singular pencils \cite{DeEd95} and for singular symmetric pencils \cite{ddd20}.  However, it may seem surprising in the case of general $n\times n$ Hermitian pencils (namely, when $r=n$). We will see that the lack of uniqueness when $r=n$ is a consequence of the different
forms to distribute the eigenvalues between real and non-real ones and also of the different possible sign characteristics of the real eigenvalues.

The rest of the paper is organized as follows: in Section \ref{notation_sec} we introduce the notation and some basic notions that are used throughout the manuscript. Section \ref{technical_sec} presents some technical results that are needed to prove the main results of the paper. The main results are introduced in Sections \ref{regular_sec} and \ref{main_sec}. More precisely, Theorem \ref{regular_th} describes the generic complete eigenstructures of Hermitian $n\times n$ matrix pencils, and in Theorem \ref{main_th} we provide the generic complete eigenstructures of Hermitian $n\times n$ matrix pencils with rank at most $r$, with $r<n$. The codimension of these generic complete eigenstructures are computed in Section \ref{codim_sec}, whereas in Section \ref{experiments_sec} we provide some numerical experiments to show that all the generic complete eigenstructures of general Hermitian pencils arise in numerical computations, and that non-real eigenvalues do not typically appear in singular Hermitian pencils. Finally, in Section \ref{conclusion_sec} we present a summary of the contributions of the manuscript.

\section{Basic definitions and notation}\label{notation_sec}

By $\RR$ and $\CC$ we denote the fields of real and complex numbers, respectively.  We also follow the standard notation ${\rm re}(\mu)$ and ${\rm im}(\mu)$ for, respectively, the real and imaginary parts of the complex number $\mu$, and $\rm i$ for the imaginary unit (namely, ${\rm i}=\sqrt{-1}$).

A {\em matrix pencil} is of the form ${\cal P}(\la)=A+\la B$, with $A,B\in\CC^{m\times n}$, and $\la$ being a scalar variable (matrix pencils can also be seen as pairs of $m\times n$ complex matrices $(A,B)$, see, for instance, \cite{LaRo05}). We use calligraphic letters, as above, to denote matrix pencils. Sometimes, and for the sake of brevity, we will remove the variable $\la$ and just write $\cP$. The pencil $\cP(\la)$ is called {\em regular} if $m=n$ and $\det\cP(\la)$ is not identically zero (as a polynomial in $\lambda$) and it is called {\em singular} otherwise. For a matrix pencil $\cP(\la)$ as above, we set $\cP(\la)^*=(A+\la B)^*=A^*+\la B^*$,  where $*$ denotes the conjugate transpose. It is important to note that the complex conjugation does not affect the variable $\la$.

In this paper, we are interested in complex {\em Hermitian} matrix pencils, namely those with $A^*=A$ and $B^*=B$. An important part of this work focuses on Hermitian matrix pencils with bounded rank, where the {\em rank} of the pencil $\cP$, denoted $\rank \cP$, is the size of the largest non-identically zero minor of $\cP$ (namely, the rank of $\cP$ when viewed as a matrix with entries in the field of rational functions in $\la$). The set of complex Hermitian $n\times n$ pencils is denoted by $\pen_{n\times n}^H$, and $\pen_{n\times n}^H(r)$ denotes the set of complex Hermitian $n\times n$ pencils with rank at most $r$.

The {\em signature} of a Hermitian constant matrix $A \in\CC^{n\times n}$ is the tuple $(\sigma_+,\sigma_-,\sigma_0)$, where $\sigma_+$ is the number of positive eigenvalues, $\sigma_-$ is the number of negative eigenvalues, and $\sigma_0$ is the multiplicity of the $0$ eigenvalue of $A$.

 Two $n\times n$ pencils $\cH_1 (\la)$ and $\cH_2 (\la)$ are $*$-congruent if there exists a nonsingular matrix $Q \in\CC^{n\times n}$ such that  $\cH_1 (\la) = Q^* \, \cH_2 (\la) \, Q$. Note that, if $\cH_1$ and $\cH_2$ are $*$-congruent, then $\cH_1$ is Hermitian if and only if $\cH_2$ is Hermitian. Since in the rest of this paper we only use the relation of ``$*$-congruence'', we will often refer to it simply as ``congruence''.

The {\em closure} of a subset of $n\times n$ complex matrix pencils ${\cal S}$, denoted by $\overline{\cal S},$ is considered in the Euclidean topology of the space $\CC^{2n^2}\simeq\CC^{n\times n}\times\CC^{n\times n}$, which is identified with the set of $n\times n$ matrix pencils, when considered as pairs of $n\times n$ matrices.  Also, open sets and open neighborhoods, as well as the notion of convergence, are considered in this topology. Through the identification above, $\pen_{n\times n}^H(r)$ becomes a subset of $\CC^{2n^2}$ and we can consider in $\pen_{n\times n}^H(r)$ the subspace topology induced by the Euclidean topology of $\CC^{2n^2}$.

The {\em direct sum} of the pencils $\cP_1,\hdots,\cP_k$ is a block diagonal pencil whose diagonal blocks are $\cP_1,\hdots,\cP_k$, in this order. We will denote it by either $\diag(\cP_1,\hdots,\cP_k)$ or $\bigoplus_{i=1}^k\cP_i$.

Following \cite[p. 909]{ddd20}, let ${\cal L}_d(\la):=\la G_d+F_d$,  where
\begin{equation*}
F_d :=
\begin{bmatrix}
0&1&&\\
&\ddots&\ddots&\\
&&0&1\\
\end{bmatrix}_{d\times(d+1)} \qquad\mbox{\rm and}\qquad
G_d :=
\begin{bmatrix}
1&0&&\\
&\ddots&\ddots&\\
&&1&0\\
\end{bmatrix}_{d\times(d+1)},
\end{equation*}
and define the Hermitian (actually, real symmetric) pencil
$$
{\cal M}_d(\la):=
\begin{bmatrix}0&{\cal L}_d(\la)^\top\\
{\cal L}_d(\la)&0
\end{bmatrix}_{(2d+1)\times(2d+1)}.
$$
The pencil ${\cal M}_0$ is a $1\times 1$ null matrix, and it is a degenerate case of ${\cal M}_d$ obtained after joining ${\cal L}_0$ and ${\cal L}_0^\top$, namely a null column and a null row, respectively.

We are also going to use the following pencils:

\begin{itemize}
\item Symmetric Jordan-like blocks associated with a finite eigenvalue:
$$
{\cal J}_k^H(\mu):=\left[\begin{array}{cccc}&&1&\la-\mu\\&\iddots&\iddots&\\1&\la-\mu&&\\\la-\mu&&&\end{array}\right]_{k\times k}\quad (\mu\in\CC).
$$
\item Hermitian Jordan-like blocks associated with the infinite eigenvalue:
$$
{\cal J}_k^H(\infty):=\left[\begin{array}{cccc}&&\la&1\\&\iddots&\iddots&\\\la&1&&\\1&&&\end{array}\right]_{k\times k}.
$$
\item Hermitian Jordan-like blocks associated with a pair of complex conjugate eigenvalues:
$$
{\cal J}_k^H(\mu,\overline\mu)=\begin{bmatrix}
0&{\cal J}_k^H(\overline\mu)\\{\cal J}_k^H(\mu)&0
\end{bmatrix}_{2k\times2k}.
$$
\end{itemize}
Note that
$$
\begin{array}{lc}
{\cal J}_1^H(\mu)=\cJ_1(\mu)=\la-\mu,&\mbox{for $\mu\in\CC$, and}\\
\cJ_1^H(\infty)=\cJ_1(\infty)=1.
\end{array}
$$
The last two blocks are standard Jordan $1\times 1$ blocks, and for this reason we will just write $\cJ_1(\mu)$ and $\cJ_1(\infty)$, respectively, omitting the superscript $H$. Note also that the Jordan-like block ${\cal J}_k^H(\mu)$ is Hermitian if and only if $\mu\in\RR$.

We also warn the reader that $\cJ_k^H(\mu,\overline{\mu})$ has size $2k\times2k$, instead of $k\times k$.

The following result, which provides a canonical form for Hermitian pencils under $*$-congruence, can be found in \cite[Th. 6.1]{LaRo05}, but here we present it as in \cite[Th. 1]{dmm22}.

\begin{theorem}\label{HCKF_th} {\rm(Hermitian Kronecker canonical form).}
Every $n\times n$ Hermitian matrix pencil, $\cH(\la)$, is $*$-congruent to a direct sum of blocks of the form
\begin{itemize}
\item[{\rm (i)}] blocks $\sigma \cJ_k^H(a)$, with $a\in\mathbb R$ and $\sigma\in\{+1,-1\}$;
\item[{\rm (ii)}] blocks $\sigma\cJ_k^H(\infty)$, with $\sigma\in\{+1,-1\}$;
\item[{\rm (iii)}] blocks $\cJ_k^H(\mu,\overline\mu)$, with $\mu\in\mathbb C$ having positive
imaginary part;
\item[{\rm (iv)}] blocks $\cM_k(\la)$.
\end{itemize}
The parameters $a,k,\sigma$, and $\mu$ may be
distinct in different blocks. These parameters, as well as the number of blocks of each type, are uniquely determined by $\cH$, and they are the invariants of $\cH$ under $*$-congruence. Furthermore, the direct sum is unique up to permutation of blocks. We will refer to this direct sum as the {\rm Hermitian Kronecker canonical form} of $\cH$, and we denote it by {\rm HKCF}$(\cH)$.
\end{theorem}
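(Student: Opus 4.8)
We sketch the plan. This is a classical result (it is, in essence, the canonical form of a selfadjoint matrix in a finite-dimensional indefinite inner product space; see \cite{LaRo05}), so the strategy is to recover it from the unstructured Kronecker canonical form together with the extra symmetry forced by Hermitianness. First I would record that symmetry. If $\cH(\la)=A+\la B$ is Hermitian and $P\,\cH(\la)\,Q=\cK(\la)$ is its Kronecker canonical form for some nonsingular $P,Q$, then taking conjugate transposes and using $\cH^*=\cH$ yields $Q^*\cH(\la)P^*=\cK(\la)^*=\overline{\cK(\la)}^{\top}$; hence $\cK$ and $\overline{\cK}^{\top}$ are both Kronecker canonical forms of $\cH$ and so agree up to a permutation of blocks. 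Reading off the Kronecker invariants from this identity gives: the left and right minimal indices of $\cH$ coincide (transposition interchanges them), the infinite elementary divisors are unconstrained, and the elementary divisors at a finite $\mu$ equal those at $\overline{\mu}$ (conjugation interchanges them). This already fixes the \emph{shape} of the answer — the singular part must assemble into blocks $\cM_k$, each finite non-real eigenvalue pair into blocks $\cJ_k^H(\mu,\overline{\mu})$, and the finite real and the infinite eigenvalues into blocks $\cJ_k^H(a)$ and $\cJ_k^H(\infty)$ — and the only thing left undetermined is a sign $\sigma\in\{+1,-1\}$ attached to each block of types (i) and (ii).

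For existence I would split $\cH$ by $*$-congruences into homogeneous pieces. First I would peel off the singular part: conjugating coefficientwise the identity $\cH(\la)v(\la)=0$ for a right null vector $v$ of minimal degree shows that $\overline{v(\la)}$ is a left null vector of the same degree, and this matched pair allows a Kronecker-type elimination to be carried out \emph{by a congruence} (in the spirit of the symmetric case treated in \cite{ddd20}); inducting, $\cH$ becomes $*$-congruent to $\cH_{\mathrm{reg}}\oplus\bigl(\bigoplus_i\cM_{k_i}\bigr)$, with $\cH_{\mathrm{reg}}$ regular Hermitian and the $k_i$ running over the common value of the left and right minimal indices. Next I would treat $\cH_{\mathrm{reg}}$: if it has an infinite eigenvalue, I would first apply a Möbius transformation $\la\mapsto(\alpha\la+\beta)/(\gamma\la+\delta)$ with \emph{real} coefficients — which preserves both the Hermitian structure and the class of degree-one pencils — chosen to send some eigenvalue-free real point to $\infty$; afterwards the leading coefficient is nonsingular, so (renaming) $\cH_{\mathrm{reg}}(\la)=B(\la I+S)$ with $B$ Hermitian nonsingular and $S=B^{-1}A$ selfadjoint for the indefinite inner product $[x,y]=y^*Bx$. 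The canonical form of $B$-selfadjoint matrices then supplies a congruence bringing $\cH_{\mathrm{reg}}(\la)=B(\la I+S)$ to a direct sum of blocks $\sigma_i\cJ_{k_i}^H(a_i)$ (from the real eigenvalues of $S$, each carrying a sign) and $\cJ_{\ell_j}^H(\mu_j,\overline{\mu_j})$ (from the non-real conjugate pairs, carrying no sign). Undoing the Möbius map sends the real blocks sitting at the image of $\infty$ back to blocks $\pm\cJ_k^H(\infty)$ and leaves the types of the remaining blocks unchanged, so collecting everything gives a direct sum of the claimed form.

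Finally I would prove uniqueness. Everything other than the signs is a strict-equivalence invariant, hence a $*$-congruence invariant, and is therefore determined by $\cH$: the elementary divisors fix the values $a$, $\mu$ and the sizes $k$ in (i) and (iii) and the sizes in (ii) (a non-real eigenvalue can only arise from a block of type (iii)), while the minimal indices fix the blocks $\cM_k$. So it remains to show that, for each fixed $k$ and each fixed $a\in\RR$ (respectively for $\infty$), the multiset of signs carried by the blocks $\cJ_k^H(a)$ (respectively $\cJ_k^H(\infty)$) is a congruence invariant. This multiset is the \emph{sign characteristic}, and it can be read off intrinsically from $\cH$ — for instance from the way the real eigenvalues of the Hermitian matrix $\cH(\la_0)$ cross zero, together with the local partial-multiplicity data of $\cH$, as the real parameter $\la_0$ passes through $a$ (equivalently, from the signatures of the Hermitian coefficients appearing in a local Smith-type factorization of $\cH$ at $a$); a matching analysis of $\rev\cH$ at $0$ handles $\infty$. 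The main obstacle is precisely this sign-characteristic bookkeeping — establishing that it is well defined and congruence-invariant, and verifying that the Möbius step above transports signs between finite real eigenvalues and $\infty$ consistently with the chosen conventions. The two splitting reductions, although technical, follow standard Kronecker-type patterns adapted to congruence.
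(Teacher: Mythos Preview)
The paper does not give its own proof of this theorem: it is stated as a known classical result and attributed to \cite[Th.~6.1]{LaRo05} (presented in the form of \cite[Th.~1]{dmm22}), with no argument supplied. Your proposal therefore goes well beyond what the paper does, and what you outline is essentially the standard route one finds in those references: deduce the symmetry constraints on the Kronecker invariants by comparing the KCF of $\cH$ with that of $\cH^*$, peel off the singular part by a congruence-compatible reduction, reduce the regular part (after a real M\"obius change of variable making the leading coefficient invertible) to the canonical form of a $B$-selfadjoint matrix in an indefinite inner product space, and finally argue uniqueness of the signs via the sign characteristic.

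As a sketch this is sound, and you correctly identify the genuinely delicate step as the congruence-invariance of the sign characteristic and its compatibility with the M\"obius transfer between finite real eigenvalues and $\infty$. Two small points worth tightening if you flesh this out: (1) with the paper's convention $\cP(\la)^*:=A^*+\la B^*$ (no conjugation of $\la$), the identity you want is $Q^*\cH(\la)P^*=\cK(\la)^*$, and one should be explicit that this is \emph{strict equivalence} of $\cK$ and $\cK^*$, not just an equality of canonical forms; (2) the ``Kronecker-type elimination by a congruence'' for the singular part is not quite as routine as in the symmetric case, since here the matched left null vector is $\overline{v(\la)}$ rather than $v(\la)$, and one has to check that the resulting reduction can still be effected by a single $*$-congruence (this is where the block $\cM_k$ --- which is real symmetric --- emerges). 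None of this is a gap in your plan, but it is exactly where the work lies.
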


The values $a$ associated with the blocks in part (i) of Theorem \ref{HCKF_th} are the {\em real eigenvalues} of $\cH$, whereas the values $\mu,\overline{\mu}$ associated with blocks in part (iii) are the {\em pairs of (non-real) complex conjugate eigenvalues} of $\cH$. Both the real and the complex conjugate eigenvalues conform the set of {\em finite eigenvalues} of $\cH$. Moreover, if at least one block like the ones in part (ii) appears in HKCF($\cH$), then $\cH$ has an {\em infinite} eigenvalue. The list of signs $\sigma$ appearing in the blocks $\sigma\cJ_k^H(a)$ and $\sigma\cJ_k^H(\infty)$, given in a certain order, is known as the {\em sign characteristic} of the pencil $\cH$ \cite{LaRo05}. We emphasize that the sign characteristic of Hermitian matrix pencils and polynomials has been defined in several equivalent ways in the literature (see, for instance, \cite{lancaster2013sign,lancaster2021spectral,mntx20}). Each block $\cM_k$ in part (iv) is associated with a couple of {\em left} and {\em right minimal indices} equal to $k$ \cite{Gant59}. The set of eigenvalues together with the number, sign characteristics, and sizes of the blocks associated to them in the HKCF($\cH$) in Theorem \ref{HCKF_th}, and the number and sizes of the blocks $\cM_k(\la)$ associated to the minimal indices,
constitute the {\em complete eigenstructure} of $\cH$.

Note that $\cH$ is regular if and only if HCKF($\cH$) does not contain blocks $\cM_k$.

The {\em Hermitian orbit} of the $n\times n$ Hermitian pencil ${\cal H}$, denoted by $\orb^H(\cal H)$, is the set of matrix pencils which are $*$-congruent with ${\cal H}$, namely
$$
\orb^H(\cH):=\{Q^*\cH(\la)Q:\ \ Q\in\CC^{n\times n}\ \mbox{is invertible}\}.
$$
Note that all pencils in $\orb^H(\cH)$ are Hermitian.

The {\em Hermitian bundle} of $\cH$, denoted by $\bun^H(\cH)$, is the set of all Hermitian pencils having the same HKCF as $\cH$ except maybe for the specific values of their distinct finite eigenvalues. Thus, all the pencils in $\bun^H(\cH)$ have the same number of distinct finite eigenvalues and, moreover, there exists an ordering of such distinct finite eigenvalues for which each eigenvalue has the same number and sizes of associated Hermitian canonical blocks (with the same signs associated with the blocks of real eigenvalues).

\vspace{0.2cm}
\begin{remark}
In our definition of Hermitian bundle we allow the finite eigenvalues to vary from one pencil to another in the same bundle. However,  the blocks (with their signs) of the infinite eigenvalue are equal for all pencils in the bundle, in contrast with the standard approach for nonstructured pencils \cite{EdEK97,EdEK99}. The reason for introducing this restriction on the infinite eigenvalue is related to the sign characteristic and to the fact that we expect the Hermitian bundles to have the following property:  if $\cH_1\in\overline{\bun^H}(\cH_2)$ then $\bun^H (\cH_1)\subseteq\overline{\bun^H}(\cH_2)$. This property is necessary for considering the set $\pen_{n\times n}^H$ a stratified manifold whose strata are the bundles, since the closure of a strata must be the finite union of itself with strata of smaller dimensions. This, however, does not hold if we allow finite eigenvalues to become the infinite one in a bundle or vice versa. Let us illustrate this situation in the simple case of $\pen_{1\times 1}^H = \{a + \la b \, : \, a,b \in \mathbb{R}\}$. The possible canonical forms of these $1\times 1$ Hermitian pencils are $+{\cal J}_1(\alpha) = \lambda - \alpha, -{\cal J}_1(\alpha) = -(\lambda - \alpha)$, with $\alpha \in \mathbb{R}$ and finite, $+{\cal J}_1(\infty) = 1, -{\cal J}_1(\infty) = -1$, and $\mathcal{M}_0 = 0$ (the only singular $1\times 1$ pencil). If we include $\la-\alpha = +{\cal J}_1(\alpha)$ for all finite $\alpha \in \mathbb{R}$ and $1 = + {\cal J}_1 (\infty)$ in the same bundle, as might seem natural taking into account the definition of bundles for unstructured pencils, then $+{\cal J}_1 (\infty) = 1\in\overline{\bun^H}(-{\cal J}_1(\beta))$, where $\beta \in \mathbb{R}$ is finite, since $-(\frac{\la}{m}-1)$ converges to $1$ as the natural number $m$ tends to infinity, and $-(\frac{\la}{m}-1)\in\bun^H(-{\cal J}_1(\beta))$. However, $+{\cal J}_1(\alpha) =\la-\alpha\not\in\overline{\bun^H}(-{\cal J}_1(\beta))$ for any $\alpha$.
This means that the previous desired property of bundles does not hold, since $\overline{\bun^H}(-{\cal J}_1(\beta))$ would not include the whole bundle to which $\la-\alpha = +{\cal J}_1(\alpha)$ and $1 = +{\cal J}_1 (\infty)$ belong. Note that the problem remains if $1 = +{\cal J}_1 (\infty)$  is included in the same bundle as $-(\la-\beta) = -{\cal J}_1(\beta)$ for all finite $\beta \in \mathbb{R}$, since taking the sequence $\{\frac{\la}{m} + 1 \} \subset \bun^H ( +{\cal J}_1(\alpha) )$, that tends to $1$ as well, we see that $1 = +{\cal J}_1 (\infty)  \in\overline{\bun^H}(+{\cal J}_1(\alpha))$, but $-(\la-\beta) \not\in\overline{\bun^H}(+{\cal J}_1(\alpha))$ for any $\beta$.  For these reasons, we only allow the finite eigenvalues to vary in the pencils of a given bundle. Using the ideas above, it is possible to construct higher dimensional examples with similar difficulties.
\end{remark}
\vspace{0.2cm}

Next, we introduce a notation that allows us to express some arguments concisely and we state without proof a few very simple properties of Hermitian bundles that are often used. If $\cH \in \pen_{n\times n}^H$ and $\cH_1, \cH_2 \in \bun^H (\cH)$, then we will write
$$
\Hb (\cH_1) \simeq \Hb (\cH_2)
$$
to mean that the $\Hb$s of $\cH_1$ and $\cH_2$ are the same up to the values of their distinct finite eigenvalues.

\begin{lemma} \label{lemm.trivialbundle} Let $\cH_1, \cH_2 \in \pen_{n\times n}^H$ and $Q \in \mathbb{C}^{n\times n}$ be nonsingular. Then
\begin{itemize}
    \item[\rm (a)] $\cH_1 \in \bun^H (\cH_2)$ if and only if $\bun^H (\cH_1) = \bun^H (\cH_2)$,
    \item[\rm (b)] $\cH_1 \in \bun^H (\cH_2)$ if and only if $\Hb (\cH_1) \simeq \Hb (\cH_2)$,
    \item[\rm (c)] $\cH_1 \in \bun^H (\cH_2)$ if and only if $Q^* \cH_1 Q \in \bun^H (\cH_2)$,
    \item[\rm (d)] $\cH_1 \in \overline{\bun^H} (\cH_2)$ if and only if $Q^* \cH_1 Q \in \overline{\bun^H} (\cH_2)$.
\end{itemize}
\end{lemma}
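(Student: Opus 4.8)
The plan is to derive all four equivalences directly from two elementary facts. The first is that the relation $\simeq$ on $\pen_{n\times n}^H$ --- ``having the same $\Hb$ up to the specific values of the distinct finite eigenvalues'' --- is an equivalence relation. Reflexivity and symmetry are immediate, and transitivity holds because in the $\Hb$ a real eigenvalue can only be matched with another real eigenvalue, a complex conjugate pair only with another conjugate pair, while the $\cJ_k^H(\infty)$ and $\cM_k$ blocks are not allowed to vary at all; hence a ``relabeling of the distinct finite eigenvalues'' is, on each of these disjoint families of blocks, a bijection that respects block sizes and signs, and the composition of two such relabelings is again one. The second fact is that $*$-congruence preserves the $\Hb$ exactly: if $\cH$ is Hermitian and $Q\in\CC^{n\times n}$ is nonsingular, then $Q^*\cH Q$ is again an $n\times n$ Hermitian pencil (congruence preserves size and the Hermitian property), it is $*$-congruent to $\cH$, and by the uniqueness part of Theorem~\ref{HCKF_th} it has the same $\Hb$ as $\cH$. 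I would then prove the items in the order (b), (a), (c), (d).

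Item (b) is the definition of Hermitian bundle unwound: by that definition $\cH_1\in\bun^H(\cH_2)$ holds precisely when $\cH_1$ is Hermitian and has the same $\Hb$ as $\cH_2$ up to the values of the distinct finite eigenvalues, i.e.\ $\Hb(\cH_1)\simeq\Hb(\cH_2)$. For item (a), ``$\Leftarrow$'' is immediate from reflexivity, since $\cH_1\in\bun^H(\cH_1)=\bun^H(\cH_2)$; for ``$\Rightarrow$'', if $\cH_1\in\bun^H(\cH_2)$ then $\Hb(\cH_1)\simeq\Hb(\cH_2)$ by (b), so for any $\cK\in\pen_{n\times n}^H$ we have $\cK\in\bun^H(\cH_1)\Leftrightarrow\Hb(\cK)\simeq\Hb(\cH_1)\Leftrightarrow\Hb(\cK)\simeq\Hb(\cH_2)\Leftrightarrow\cK\in\bun^H(\cH_2)$, using (b) together with transitivity and symmetry of $\simeq$; this gives $\bun^H(\cH_1)=\bun^H(\cH_2)$.

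Item (c) combines (a) with the second fact: $Q^*\cH_1 Q$ is Hermitian with $\Hb(Q^*\cH_1 Q)=\Hb(\cH_1)$, hence $Q^*\cH_1 Q\in\bun^H(\cH_1)$, so by (a) $\bun^H(Q^*\cH_1 Q)=\bun^H(\cH_1)$; therefore $Q^*\cH_1 Q\in\bun^H(\cH_2)$ if and only if $\cH_1\in\bun^H(\cH_2)$. For item (d), the key observation is that $\Phi_Q\colon\CC^{2n^2}\to\CC^{2n^2}$, $\Phi_Q(\cP)=Q^*\cP Q$, is a $\CC$-linear bijection of the ambient space (its inverse being $\cP\mapsto(Q^{-1})^*\cP\,Q^{-1}$), hence a homeomorphism, so it commutes with closure: $\Phi_Q(\overline{\cS})=\overline{\Phi_Q(\cS)}$ for every $\cS\subseteq\CC^{2n^2}$. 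By item (c), a Hermitian pencil lies in $\bun^H(\cH_2)$ if and only if its $\Phi_Q$-image does, and since $\Phi_Q$ is a bijection this forces $\Phi_Q$ to map $\bun^H(\cH_2)$ onto itself; consequently $\Phi_Q(\overline{\bun^H}(\cH_2))=\overline{\bun^H}(\cH_2)$, and, again by bijectivity, $\cH_1\in\overline{\bun^H}(\cH_2)$ if and only if $Q^*\cH_1 Q=\Phi_Q(\cH_1)\in\overline{\bun^H}(\cH_2)$.

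There is no substantial obstacle here --- this is the kind of statement proved by carefully writing out the definitions --- but the two points I would take care over are exactly the two facts isolated at the outset: that the segregation of block types in the $\Hb$ makes $\simeq$ a genuine equivalence relation (which is what (a) and (b) rest on), and that the congruence action $\Phi_Q$ extends to a homeomorphism of $\CC^{2n^2}$ commuting with topological closure, which is what reduces (d), the only topological item, to (c). Everything else is formal manipulation.
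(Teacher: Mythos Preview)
Your proof is correct. The paper explicitly states this lemma without proof (``we state without proof a few very simple properties of Hermitian bundles that are often used''), so there is nothing to compare against; your argument --- that $\simeq$ is an equivalence relation because the block types (real, conjugate-pair, infinite, singular) are structurally segregated in the $\Hb$, and that the congruence action $\Phi_Q$ is a linear homeomorphism of $\CC^{2n^2}$ stabilizing each bundle --- is exactly the sort of routine unwinding the authors had in mind.
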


\section{Some technical results}\label{technical_sec}
In this section, we present some results that are needed to prove the main theorems of the paper (in Sections \ref{regular_sec} and \ref{main_sec}).
We first provide a block anti-triangular decomposition of Hermitian matrix pencils. This result can be proven in the same way as the analogous result for symmetric matrix pencils \cite[Theorem 2]{ddd20} but using the factorization $W^*=U^* R$  (equivalently, $W=R^* U$), where $U^*$ is unitary and $R$ is upper-triangular, see e.g., \cite[p. 89, Theorem 2.1.14]{HoJo13}.
\begin{theorem}\label{antitriangular_th}{\rm (Block anti-triangular form of Hermitian pencils).} Let ${\cal H}(\la)$ be a Hermitian pencil. Then, there is a unitary matrix $U$ such that
\begin{equation}\label{antitriangular}
{\cal H}(\la)=U^*\left[\begin{array}{ccc}{\cal A}(\la)&{\cal B}(\la)&{\cal H}_{\mbox{\rm\tiny right}}(\la)\\
{\cal B}(\la)^*&{\cal H}_{\mbox{\rm\tiny reg}}(\la)&0\\{\cal H}_{\mbox{\rm\tiny right}}(\la)^*&0&0\end{array}\right]U,
\end{equation}
where:
\begin{itemize}
\item[\rm(i)] ${\cal A}(\la)$ is a Hermitian pencil.

\item[\rm(ii)] ${\cal H}_{\mbox{\rm\tiny reg}}(\la)$ is a regular Hermitian pencil whose elementary divisors are exactly those of ${\cal H}(\la)$.

\item[\rm(iii)] ${\cal H}_{\mbox{\rm\tiny right}}(\la)$ is a pencil whose complete eigenstructure consists only of the right minimal indices of ${\cal H}(\la)$.
\end{itemize}
As a consequence, ${\cal H}_{\mbox{\rm\tiny right}}(\la)^*$ is a pencil whose complete eigenstructure consists only of the left minimal indices of ${\cal H}(\la)$.
\end{theorem}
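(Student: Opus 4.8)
The goal is to establish Theorem \ref{antitriangular_th}, the block anti-triangular form of a Hermitian pencil, following the blueprint of the analogous symmetric result \cite[Theorem 2]{ddd20} but with the complex/unitary machinery in place of the real/orthogonal one.

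First I would identify the subspace that produces the bottom block row of zeros. Write $\cH(\la)=A+\la B$ with $A=A^*$, $B=B^*$. The right minimal indices of $\cH$ correspond to a basis of the rational null space, and the constant part of this structure lives in the intersection $\cN:=\ker A\cap\ker B$, equivalently the common (right) null space of the pencil evaluated at generic $\la$. Let me choose an orthonormal basis of $\cN$ and complete it to an orthonormal basis of $\CC^n$; collect the completion/null-space vectors into a unitary $U$ arranged so that the last $\dim\cN$ columns span $\cN$. Since $\cN$ is annihilated by both $A$ and $B$ on the right, $\cH(\la)$ times those columns is $0$; taking conjugate transposes and using Hermitian-ness, those columns also annihilate $\cH(\la)$ on the left. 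Hence $U\cH(\la)U^*$ has a trailing block of zero rows and, by the Hermitian symmetry, a trailing block of zero columns, i.e.\ the bottom-right $2\times 2$ (block) corner is structurally as displayed, with the $(1,3)$ block a pencil $\cH_{\mathrm{right}}$ and $(3,1)$ block its conjugate transpose. Renaming, $\cH(\la)=U^*\big[\begin{smallmatrix}\cH'(\la)&\cH_{\mathrm{right}}(\la)\\ \cH_{\mathrm{right}}(\la)^*&0\end{smallmatrix}\big]U$ where $\cH'$ is a smaller Hermitian pencil with \emph{no} constant common null vector.

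Second, within $\cH'$ I separate out the regular part. The pencil $\cH'$ is Hermitian with trivial constant null space, but it may still be singular; however its singular part, if present, now has positive minimal indices, so $\cH'$ restricted to an appropriate subspace is regular and carries all the elementary divisors of $\cH$ (deflating the zero rows/columns does not change the elementary divisors, by Theorem \ref{HCKF_th}, since the deflated blocks come only from $\cM_0$-type pieces). Using the $W^*=U^*R$ factorization advertised in the statement (a QR-type decomposition, \cite[Thm.~2.1.14]{HoJo13}) I can, after one more unitary conjugation, bring $\cH'$ to the form $\big[\begin{smallmatrix}\cA(\la)&\cB(\la)\\ \cB(\la)^*&\cH_{\mathrm{reg}}(\la)\end{smallmatrix}\big]$ with $\cH_{\mathrm{reg}}$ regular Hermitian and $\cA$ Hermitian. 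The key point is that the two unitary transformations compose into a single unitary $U$, so the final form is exactly \eqref{antitriangular}; conjugation by a unitary (equivalently, $*$-congruence by a unitary) preserves the Hermitian property of every diagonal block and preserves the whole complete eigenstructure, so $\cH_{\mathrm{reg}}$ inherits precisely the elementary divisors of $\cH$ and $\cH_{\mathrm{right}}$ precisely its right minimal indices. The final sentence about $\cH_{\mathrm{right}}^*$ and the left minimal indices is immediate from Hermitian symmetry: the left minimal indices of $\cH$ equal the right minimal indices of $\cH^*=\cH$, and transposing $\cH_{\mathrm{right}}$ exchanges left and right minimal indices while leaving their sizes unchanged.

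The main obstacle, and the step that needs genuine care rather than bookkeeping, is verifying claim (ii): that $\cH_{\mathrm{reg}}$ captures \emph{exactly} the elementary divisors of $\cH$, with the correct sign characteristic on the real and infinite eigenvalues. Plain $*$-congruence invariance handles the eigenstructure of the whole block matrix, but I must argue that the off-diagonal coupling block $\cB(\la)$ does not redistribute elementary divisors between $\cA$ and $\cH_{\mathrm{reg}}$ — this is where one mimics the symmetric-case argument of \cite{ddd20}, using that $\cA$ contributes only singular structure (minimal indices) after the deflation and that $\cH_{\mathrm{reg}}$, being regular, must therefore carry all the elementary divisors; the sign characteristic then follows because $*$-congruence preserves it block by block once the blocks are decoupled at the level of the HKCF. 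Everything else — the existence of the orthonormal bases, the composition of unitaries, the Hermitian symmetry of the trailing zero blocks — is routine.
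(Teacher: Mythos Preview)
Your approach has a genuine gap at the very first step. You identify $\cN:=\ker A\cap\ker B$ and claim it is ``equivalently the common (right) null space of the pencil evaluated at generic $\la$.'' These are different spaces: $\ker A\cap\ker B=\bigcap_{\la}\ker\cH(\la)$ picks out only the \emph{constant} right null vectors, so $\dim\cN$ equals the number of minimal indices equal to zero (the number of $\cM_0$ blocks in the HKCF), whereas the generic null space $\ker\cH(\la_0)$ has dimension $n-r$, the total number of right minimal indices. For example, a single $\cM_1$ block has $\ker A\cap\ker B=\{0\}$ although it carries one right minimal index equal to $1$. Thus your first deflation produces a trailing zero block whose size is too small, and the block you call $\cH_{\mathrm{right}}$ at that stage has the wrong number of columns and cannot possibly have complete eigenstructure consisting of \emph{all} the right minimal indices of $\cH$. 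The positive minimal indices remain trapped inside $\cH'$, and your vague second step gives no mechanism for transferring them into the $(1,3)$ block; indeed, a further unitary acting only on the $\cH'$ block will in general leave a nonzero $(2,3)$ block.

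The route the paper points to is conceptually simpler and avoids iterated deflations altogether. Starting from the HKCF, $\cH=W^*KW$ with $W$ invertible, one first permutes the canonical blocks so that $K$ itself already has the anti-triangular shape: the $\cL_{d_i}$-parts of all $\cM_{d_i}$ blocks collected into the $(1,3)$ position, their transposes into $(3,1)$, the regular part in the $(2,2)$ position, and zeros elsewhere. Then one applies the QR-type factorization $W^*=U^*R$ with $U$ unitary and $R$ upper triangular, so that $\cH=U^*\,R K R^*\,U$. Because $R$ is block upper triangular conformally with the $3\times3$ partition, a direct block multiplication shows $RKR^*$ inherits exactly the anti-triangular zero pattern, with $\cH_{\mathrm{reg}}=R_{22}K_{\mathrm{reg}}R_{22}^*$ (hence $*$-congruent to $K_{\mathrm{reg}}$, so regular Hermitian with the same elementary divisors and signs) and $\cH_{\mathrm{right}}=R_{11}\big(\bigoplus_i\cL_{d_i}\big)R_{33}^*$ strictly equivalent to $\bigoplus_i\cL_{d_i}$, hence carrying precisely the right minimal indices. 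This is where the factorization $W^*=U^*R$ is actually used; in your write-up it never does any real work.
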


In the proofs of the main Theorems \ref{regular_th} and \ref{main_th} we make use of the following results.

\begin{lemma} \label{lemm-direcsum1} Let $\cA_1, \ldots , \cA_s$ and $\cH_1, \ldots , \cH_s$ be Hermitian pencils of different sizes such that, for $i,j = 1, \ldots , s$,
\begin{itemize}
    \item[\rm (a)] the sizes of $\cA_i$ and $\cH_i$ are equal, $\cA_i \in \bun^H \left(\cH_i \right)$, and
    \item[\rm (b)] $\cH_i$ and $\cH_j$ have no finite eigenvalues in common for $i \ne j$.
\end{itemize}
Then \,
$
\cA_1 \oplus \cdots \oplus \cA_s  \in
\overline{\bun^H} \left( \cH_1 \oplus \cdots \oplus \cH_s \right)
$. If, in addition, $\cA_i$ and $\cA_j$ have no finite eigenvalues in common for $i \ne j$, then
$
\cA_1 \oplus \cdots \oplus \cA_s  \in
\bun^H \left( \cH_1 \oplus \cdots \oplus \cH_s \right)
$.
\end{lemma}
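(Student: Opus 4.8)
The plan is to prove the two assertions — membership in the bundle closure, and membership in the bundle itself under the extra hypothesis — essentially in tandem, reducing both to the corresponding statements for a single pair $(\cA_i,\cH_i)$ via a perturbation argument. Throughout I will freely use Lemma \ref{lemm.trivialbundle}: since congruence does not change the bundle, I may replace each $\cA_i$ and each $\cH_i$ by any convenient pencil in its Hermitian orbit, in particular by its HKCF; and since direct sums of congruences are congruences, I may work with the block-diagonal pencil $\cA_1\oplus\cdots\oplus\cA_s$ directly.

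First I would treat the ``if, in addition'' clause, because it is the cleaner of the two and it will be reused. Assume $\cA_i$ and $\cA_j$ have no common finite eigenvalues for $i\ne j$. By hypothesis (a), $\cA_i\in\bun^H(\cH_i)$, so by Lemma \ref{lemm.trivialbundle}(b), $\Hb(\cA_i)\simeq\Hb(\cH_i)$: the two have the same block structure (same sizes, same signs, same infinite-eigenvalue and minimal-index data) and differ only in the numerical values of their distinct finite eigenvalues. Taking direct sums, $\Hb(\cA_1)\oplus\cdots\oplus\Hb(\cA_s)$ and $\Hb(\cH_1)\oplus\cdots\oplus\Hb(\cH_s)$ therefore consist of exactly the same multiset of canonical blocks except for the values of the finite eigenvalues. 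The point where both no-common-eigenvalue hypotheses enter is that they guarantee no accidental coalescence: within $\Hb(\cH_1)\oplus\cdots\oplus\Hb(\cH_s)$ the distinct finite eigenvalues are exactly the union of those of the $\cH_i$ (by (b)), and within $\cA_1\oplus\cdots\oplus\cA_s$ they are exactly the union of those of the $\cA_i$ (by the additional assumption), so there is an ordering of the distinct finite eigenvalues matching the two lists block-for-block. Hence $\Hb(\cA_1\oplus\cdots\oplus\cA_s)\simeq\Hb(\cH_1\oplus\cdots\oplus\cH_s)$, and Lemma \ref{lemm.trivialbundle}(b) again gives $\cA_1\oplus\cdots\oplus\cA_s\in\bun^H(\cH_1\oplus\cdots\oplus\cH_s)$.

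For the general statement (only hypothesis (b) on the $\cH_i$), the idea is to approximate $\cA_1\oplus\cdots\oplus\cA_s$ by pencils that do satisfy the extra clause. Fix the HKCF of each $\cA_i$; its finite eigenvalues (real ones $a$ and complex-conjugate pairs $\mu,\overline\mu$) can each be shifted by an arbitrarily small real amount — shift a real eigenvalue $a$ to $a+\varepsilon$ and a pair $\mu,\overline\mu$ to $\mu+\varepsilon,\overline{\mu+\varepsilon}$ — producing a pencil $\cA_i^{(\varepsilon)}$ that is still Hermitian, still has the same HKCF block structure (so $\cA_i^{(\varepsilon)}\in\bun^H(\cH_i)$ by Lemma \ref{lemm.trivialbundle}(b), since it differs from $\cA_i$ only in finite-eigenvalue values), and converges to $\cA_i$ entrywise as $\varepsilon\to0$. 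Choosing the shifts generically (a finite set of linear conditions to avoid) we can arrange that for small $\varepsilon\ne0$ the pencils $\cA_1^{(\varepsilon)},\ldots,\cA_s^{(\varepsilon)}$ have pairwise disjoint finite spectra. Applying the already-proved clause to $\cA_1^{(\varepsilon)},\ldots,\cA_s^{(\varepsilon)}$ and $\cH_1,\ldots,\cH_s$ gives $\cA_1^{(\varepsilon)}\oplus\cdots\oplus\cA_s^{(\varepsilon)}\in\bun^H(\cH_1\oplus\cdots\oplus\cH_s)$ for all small $\varepsilon\ne0$; letting $\varepsilon\to0$ and using that $\cA_1^{(\varepsilon)}\oplus\cdots\oplus\cA_s^{(\varepsilon)}\to\cA_1\oplus\cdots\oplus\cA_s$ in $\CC^{2n^2}$ yields $\cA_1\oplus\cdots\oplus\cA_s\in\overline{\bun^H}(\cH_1\oplus\cdots\oplus\cH_s)$, as claimed.

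The main obstacle is bookkeeping rather than any deep difficulty: one must be careful that ``the same HKCF up to finite-eigenvalue values'' is genuinely preserved when passing to direct sums, i.e. that no two distinct finite eigenvalues of different summands are forced to collide on one side but not the other — this is exactly what hypotheses (b) and the extra assumption rule out, and it is worth stating explicitly via an ordering of the pooled list of distinct finite eigenvalues. A secondary point to handle carefully is that the perturbations $\cA_i\rightsquigarrow\cA_i^{(\varepsilon)}$ must be Hermitian-structure-preserving and must move only finite eigenvalues (not the infinite eigenvalue, not the minimal-index blocks, and not the signs), which is immediate from working in the HKCF where real eigenvalues and complex-conjugate pairs appear as explicit parameters $a$ and $\mu$ in the blocks $\sigma\cJ_k^H(a)$ and $\cJ_k^H(\mu,\overline\mu)$.
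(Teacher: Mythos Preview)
Your proof is correct and follows essentially the same approach as the paper: first establish the ``no common eigenvalues'' case by observing that the HKCF of the direct sum is the direct sum of the HKCFs on both sides (since neither side has coalescence), hence $\Hb(\cA_1\oplus\cdots\oplus\cA_s)\simeq\Hb(\cH_1\oplus\cdots\oplus\cH_s)$; then handle the general case by perturbing the finite eigenvalues of the $\cA_i$ by small real amounts so as to separate them, apply the first case, and pass to the limit. The paper organizes the perturbation via sequences $\{c_k^{(m)}\}\to 0$ indexed by the repeated eigenvalues, while you phrase it as a generic choice of a small real parameter $\varepsilon$, but the content is the same.
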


\begin{proof} Case 1. Let us assume first that $\cA_i$ and $\cA_j$ have no finite eigenvalues in common for $i \ne j$. Then
\begin{align*}
    \Hb (\cA_1 \oplus \cdots \oplus \cA_s ) & = \Hb (\cA_1) \oplus \cdots \oplus \Hb (\cA_s ) \\
     & \simeq \Hb (\cH_1) \oplus \cdots \oplus \Hb (\cH_s ) = \Hb (\cH_1 \oplus \cdots \oplus \cH_s ) ,
\end{align*}
and Lemma \ref{lemm.trivialbundle}-(b) implies $
\cA_1 \oplus \cdots \oplus \cA_s  \in
\bun^H \left( \cH_1 \oplus \cdots \oplus \cH_s \right)
$.

Case 2. Next, assume that $\cA_i$ and $\cA_j$ have finite eigenvalues in common for some $i\ne j$. Let $\{ \la_1 ,  \ldots ,\la_t\} := \{ \la \in \CC \, : \, \mbox{$\la$ is a finite eigenvalue of  $\cA_i$ and of $\cA_j$ for $i\ne j$}\}$,  where if $\la$ is an eigenvalue of exactly $\ell$ pencils $\cA_i$ (with $\ell>1$) then it is repeated $\ell$ times in the previous set. Since there are infinitely many different sequences of real numbers $\{a_n\}_{n\in\mathbb N}$ with all their terms distinct whose limit is zero (as $m$ tends to infinity), we can choose $t$ of these sequences $\{c_{1}^{(m)}\}, \ldots , \{c_{t}^{(m)}\}$ and replace in $\Hb (\cA_1), \ldots ,\Hb (\cA_s)$ the common finite eigenvalues $\la_1 , \la_2 , \ldots ,\la_t$ by $\la_1 + c_{1}^{(m)},  \ldots , \la_t + c_{t}^{(m)}$
to get sequences $\{\Hb (\cA_{1}^{(m)}) \}, \ldots , \allowbreak \{\Hb (\cA_{s}^{(m)}) \}$ of pencils in $\Hb$s such that, for all $m\in \NN$,
\begin{itemize}
    \item[\rm (p1)] $\Hb (\cA_{i}^{(m)}) \simeq \Hb (\cA_{i}) \in \bun^H \left( \cH_{i} \right)$ for $i= 1\ldots , s$,
    \item[\rm (p2)]  $\Hb (\cA_{i}^{(m)})$ and $\Hb (\cA_{j}^{(m)})$ have no finite eigenvalues in common for $i\ne j$, and
    \item[\rm (p3)] $\lim_{m\rightarrow \infty}\Hb (\cA_{i}^{(m)}) = \Hb (\cA_{i})$, for $i=1, \ldots, s$.
\end{itemize}
Then, the result in Case 1 implies
$$ \Hb (\cA_{1}^{(m)} \oplus \cdots \oplus \cA_{s}^{(m)} ) =
\Hb (\cA_{1}^{(m)}) \oplus \cdots \oplus \Hb (\cA_{s}^{(m)})  \in
\bun^H \left( \cH_1 \oplus \cdots \oplus \cH_s \right)
$$
for all $m$. Then, from (p3) above, $\Hb (\cA_{1} \oplus \cdots \oplus \cA_{s}) \in
\overline{\bun^H} \left( \cH_1 \oplus \cdots \oplus \cH_s \right)$, and the result follows from Lemma \ref{lemm.trivialbundle}-(d).
\end{proof}

\vspace{0.2cm}
\begin{remark} We emphasize that, in Lemma {\rm\ref{lemm-direcsum1}}, the closure in the condition $\cA_1 \oplus \cdots \oplus \cA_s  \in \overline{\bun^H} \left( \cH_1 \oplus \cdots \oplus \cH_s \right)$ cannot be removed. Consider the following $1 \times 1$ Hermitian pencils: $\cA_1 = \cA_2 = \la - 1$, $\cH_1 = \la + 1$, and
$\cH_2 = \la$. Thus, $\cA_1 \in \bun^H \left( \cH_1 \right)$, $\cA_2 \in \bun^H \left( \cH_2 \right)$, and $\cA_1$ and  $\cA_2$ have finite eigenvalues in common. Obviously $\cA_1 \oplus \cA_2 = (\la - 1) \oplus (\la -1) \notin \bun^H \left(\cH_1 \oplus \cH_2\right)$. However, $\cA_1 \oplus \cA_2 = (\la - 1) \oplus (\la -1) \in \overline{\bun^H} \left(\cH_1 \oplus \cH_2 \right)$, since, for each $m =1 , 2, \ldots$,
$(\la - (1-1/m)) \oplus (\la -(1+ 1/m)) \in \bun^H \left(\cH_1 \oplus \cH_2 \right)$. Observe also that in this example, $\bun^H \left(\cH_1 \right) = \bun^H \left(\cH_2 \right)$, but the hypotheses of Lemma {\rm\ref{lemm-direcsum1}} require to take different pencils representing this bundle.
\end{remark}
\vspace{0.2cm}

Lemma \ref{lemm-direcsum1} allows us to prove the following lemma, which is the one we will actually use to prove our main results.

\begin{lemma} \label{lemm-direcsum2} Let $\cH_1, \ldots , \cH_s$ and $\widetilde\cH_1, \ldots , \widetilde\cH_s$ be Hermitian pencils of different sizes such that, for $i,j = 1, \ldots , s$,
\begin{itemize}
\item[\rm (a)] the sizes of $\cH_i$ and $\widetilde \cH_i$ are equal and $\overline{\bun^H}\left( \cH_i \right) \subseteq
\overline{\bun^H}\left( \widetilde \cH_i \right)$, and
\item[\rm (b)] $\widetilde\cH_i$ and $\widetilde\cH_j$ have no finite eigenvalues in common for $i \ne j$.
\end{itemize}
Then \,
$\overline{\bun^H}\left( \cH_1 \oplus \cdots \oplus \cH_s \right) \subseteq
\overline{\bun^H}\left( \widetilde \cH_1 \oplus \cdots \oplus \widetilde \cH_s \right).$
\end{lemma}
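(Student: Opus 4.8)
The plan is to reduce Lemma~\ref{lemm-direcsum2} to Lemma~\ref{lemm-direcsum1} by a density argument. The hypothesis $\overline{\bun^H}(\cH_i)\subseteq\overline{\bun^H}(\widetilde\cH_i)$ means that $\cH_i$ itself lies in $\overline{\bun^H}(\widetilde\cH_i)$, so there is a sequence of pencils $\cA_i^{(m)}\in\bun^H(\widetilde\cH_i)$ with $\cA_i^{(m)}\to\cH_i$ as $m\to\infty$. The idea is to form the direct sums $\cA_1^{(m)}\oplus\cdots\oplus\cA_s^{(m)}$, apply Lemma~\ref{lemm-direcsum1} to place each of them in $\overline{\bun^H}(\widetilde\cH_1\oplus\cdots\oplus\widetilde\cH_s)$ (note that hypothesis~(b) of Lemma~\ref{lemm-direcsum2} gives exactly hypothesis~(b) of Lemma~\ref{lemm-direcsum1} for the family $\widetilde\cH_1,\dots,\widetilde\cH_s$, while $\cA_i^{(m)}\in\bun^H(\widetilde\cH_i)$ gives hypothesis~(a)), and then let $m\to\infty$ so that $\cA_1^{(m)}\oplus\cdots\oplus\cA_s^{(m)}\to\cH_1\oplus\cdots\oplus\cH_s$, which therefore lies in $\overline{\bun^H}(\widetilde\cH_1\oplus\cdots\oplus\widetilde\cH_s)$ (a closed set contains the limits of its points). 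Finally, since $\overline{\bun^H}(\widetilde\cH_1\oplus\cdots\oplus\widetilde\cH_s)$ is closed and contains $\cH_1\oplus\cdots\oplus\cH_s$, it contains $\overline{\bun^H}(\cH_1\oplus\cdots\oplus\cH_s)$, which is the claim. (Here I would also note that $\bun^H(\cH_1\oplus\cdots\oplus\cH_s)\subseteq\overline{\bun^H}(\cH_1\oplus\cdots\oplus\cH_s)$ trivially, so it suffices to show $\cH_1\oplus\cdots\oplus\cH_s$ is in the right-hand closure; if one prefers, one can argue directly that $\overline{\bun^H}(\cS_1)\subseteq\overline{\bun^H}(\cS_2)$ whenever $\cS_1\in\overline{\bun^H}(\cS_2)$, using $\overline{\overline{\bun^H}(\cS_2)}=\overline{\bun^H}(\cS_2)$.)

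There is one point that needs care: Lemma~\ref{lemm-direcsum1} as stated requires the $\cH_i$ playing the role of the ``target'' pencils to have pairwise no common finite eigenvalues, and this is satisfied by the $\widetilde\cH_i$ by hypothesis~(b). It does \emph{not} require the $\cA_i^{(m)}$ to have pairwise distinct finite eigenvalues for the conclusion with a closure — which is precisely the conclusion we need. So no perturbation of the $\cA_i^{(m)}$ themselves is required; the necessary perturbation is already built into the statement of Lemma~\ref{lemm-direcsum1} (Case~2 of its proof). Thus each $\cA_1^{(m)}\oplus\cdots\oplus\cA_s^{(m)}\in\overline{\bun^H}(\widetilde\cH_1\oplus\cdots\oplus\widetilde\cH_s)$, and this is all we use.

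A second, more routine, point: one must check that $\cA_1^{(m)}\oplus\cdots\oplus\cA_s^{(m)}$ indeed converges to $\cH_1\oplus\cdots\oplus\cH_s$ in the Euclidean topology of the pencil space. This is immediate because the blocks have fixed sizes (hypothesis~(a): $\cH_i$ and $\widetilde\cH_i$, hence $\cA_i^{(m)}$, all have the same size), so the direct sum is a block-diagonal embedding into a fixed $\CC^{2N^2}$ with $N=\sum_i n_i$, and a block-diagonal assembly of convergent sequences converges to the block-diagonal assembly of the limits. One should be slightly careful that the sequence $\cA_i^{(m)}\to\cH_i$ exists: this is exactly the meaning of $\cH_i\in\overline{\bun^H}(\widetilde\cH_i)$, which follows from hypothesis~(a) since $\cH_i\in\bun^H(\cH_i)\subseteq\overline{\bun^H}(\cH_i)\subseteq\overline{\bun^H}(\widetilde\cH_i)$.

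I do not expect a genuine obstacle here; the lemma is essentially a formal consequence of Lemma~\ref{lemm-direcsum1} together with the elementary topological facts that closures are closed and that a block-diagonal map is continuous. The only thing to be vigilant about is bookkeeping: making sure the hypotheses of Lemma~\ref{lemm-direcsum1} are matched to the right family (the $\widetilde\cH_i$ in the ``no common eigenvalues'' role, the $\cA_i^{(m)}$ in the ``$\cA_i$'' role) and that we are invoking the \emph{closure} version of its conclusion, not the stronger exact-bundle version. If one wanted to be maximally explicit one could also invoke Lemma~\ref{lemm.trivialbundle}-(d) to pass between a representative and an arbitrary congruent pencil, but with the setup above that is not needed.
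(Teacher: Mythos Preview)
Your argument has a genuine gap in the final step. You correctly show that the specific pencil $\cH_1\oplus\cdots\oplus\cH_s$ lies in $\overline{\bun^H}(\widetilde\cH_1\oplus\cdots\oplus\widetilde\cH_s)$, but then you claim that because this set is closed and contains that one point, it contains all of $\overline{\bun^H}(\cH_1\oplus\cdots\oplus\cH_s)$. That inference is not valid: a closed set containing a point $\cS$ need not contain $\bun^H(\cS)$. Lemma~\ref{lemm.trivialbundle}-(d) gives you the whole \emph{orbit} of $\cS$, but a bundle is strictly larger than an orbit, since the finite eigenvalues are allowed to vary. The implication ``$\cS_1\in\overline{\bun^H}(\cS_2)\Rightarrow\bun^H(\cS_1)\subseteq\overline{\bun^H}(\cS_2)$'' is exactly the stratification property the paper says it \emph{expects} in the Remark after the bundle definition; it is not something you can invoke for free, and your justification via $\overline{\overline{\bun^H}(\cS_2)}=\overline{\bun^H}(\cS_2)$ does not supply the missing containment $\bun^H(\cS_1)\subseteq\overline{\bun^H}(\cS_2)$.

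The paper's proof avoids this by starting from an \emph{arbitrary} $\cD\in\bun^H(\cH_1\oplus\cdots\oplus\cH_s)$, writing $\Hb(\cD)=\Hb(\cD_1)\oplus\cdots\oplus\Hb(\cD_s)$ with $\cD_i\in\bun^H(\cH_i)$, and then running your approximation argument with $\cD_i$ in place of $\cH_i$. This uses the full strength of hypothesis~(a), namely that \emph{every} element of $\bun^H(\cH_i)$ (not just $\cH_i$ itself) lies in $\overline{\bun^H}(\widetilde\cH_i)$. With that one additional decomposition step your approach becomes correct and coincides with the paper's.
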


\begin{proof} By definition of closure, we only need to prove
$\bun^H \left( \cH_1 \oplus \cdots \oplus \cH_s \right) \subseteq
\overline{\bun^H}\left( \widetilde \cH_1 \oplus \cdots \oplus \widetilde \cH_s \right).$ Let $\cD \in \bun^H \left( \cH_1 \oplus \cdots \oplus \cH_s \right) $. This implies
$
\Hb (\cD)  \simeq \Hb (\cH_1 \oplus \cdots \oplus \cH_s ) =
\Hb (\cH_1) \oplus \cdots \oplus \Hb (\cH_s ).
$
Therefore, $\Hb (\cD) = \Hb (\cD_1) \oplus \cdots \oplus \Hb (\cD_s )$, with $\Hb (\cD_i) \simeq \Hb (\cH_i)$, which, according to Lemma \ref{lemm.trivialbundle} (b), implies $\Hb (\cD_i) \in \bun^H \left( \cH_i \right) \subseteq \overline{\bun^H}\left( \widetilde \cH_i \right)$, for $i =1 , \ldots s$. Thus, there exist sequences of Hermitian pencils, $\{ \cD_i^{(m)}\}  \subseteq \bun^H\left( \widetilde \cH_i \right)$, such that $\lim_{m\rightarrow \infty} \cD_i^{(m)} = \Hb (\cD_i)$,  for $i=1, \ldots s$. In addition, from Lemma \ref{lemm-direcsum1},
$$
\cD_1^{(m)} \oplus \cdots \oplus \cD_s^{(m)} \in \overline{\bun^H}\left( \widetilde \cH_1 \oplus \cdots \oplus \widetilde \cH_s \right)
$$
for all $m$.
Since, $\lim_{m\rightarrow \infty} ( \cD_1^{(m)} \oplus \cdots \oplus \cD_s^{(m)} ) =  \Hb (\cD_1) \oplus \cdots \oplus \Hb (\cD_s ) = \Hb (\cD)$, we get $\Hb (\cD) \in \overline{\bun^H}\left( \widetilde \cH_1 \oplus \cdots \oplus \widetilde \cH_s \right)$, which combined with Lemma \ref{lemm.trivialbundle}-(d) implies $\cD \in \overline{\bun^H}\left( \widetilde \cH_1 \oplus \cdots \oplus \widetilde \cH_s \right)$.
\end{proof}

\vspace{0.2cm}
Lemma \ref{lemm-direcsum2} will be combined  in the proofs of the main results with the next proposition dealing with the canonical blocks appearing in Theorem \ref{HCKF_th}.

\begin{proposition}\label{propo}  Let $\sigma\in\{+1,-1\}$ and $k >0$, $d,d_1,\hdots,d_t\geq0$ be integer numbers.
\begin{itemize}

\item[{\rm(a1)}] If $k$ is even and $a\in\RR$, then
$$
\overline{\bun^H}\left(\sigma{\cal J}_k^H(a) \right) \subseteq  \,\overline{\bun^H}\left(\diag(\cJ_1^H(\mu_1,\overline{\mu_1}),\hdots,\cJ_1^H(\mu_{\frac{k}{2}-1},\overline{\mu_{\frac{k}{2}-1}}),\cJ_1(a_1),-\cJ_1(a_2))\right),
$$
and
$$
\overline{\bun^H}\left(\sigma{\cal J}_k^H(\infty) \right) \subseteq  \, \overline{\bun^H}\left(\diag(\cJ_1^H(\mu_1,\overline{\mu_1}),\hdots,\cJ_1^H(\mu_{\frac{k}{2}-1},\overline{\mu_{\frac{k}{2}-1}}),\sigma\cJ_1(a_1),\sigma\cJ_1(a_2))\right),
$$
with $a_1,a_2\in\RR$, $\mu_1,\hdots,\mu_{\frac{k}{2}-1} \in \CC$ having positive imaginary part, and $a_1,a_2,\mu_1,\allowbreak \hdots,\mu_{\frac{k}{2}-1}$ different to each other.

\item[{\rm(a2)}] If $k$ is odd and $a\in\RR\cup\{\infty\}$, then
$$
\overline{\bun^H}\left( \sigma{\cal J}_k^H(a) \right) \subseteq  \, \overline{\bun^H}\left(\diag(\cJ_1^H(\mu_1,\overline{\mu_1}),\hdots,\cJ_1^H(\mu_{\frac{k-1}{2}},\overline{\mu_{\frac{k-1}{2}}}),\sigma\cJ_1(\widetilde a))\right),
$$
with $\widetilde a\in\RR$, $\mu_1,\hdots,\mu_{\frac{k-1}{2}} \in \CC$ having positive imaginary part, and $\mu_1,\hdots,\mu_{\frac{k-1}{2}}$ different to each other.

\item[{\rm(b)}] If $\mu\in\CC$ has positive imaginary part, then $$\overline{\bun^H}\left(\cJ_{k}^H(\mu,\overline{\mu}) \right) \subseteq  \, \overline{\bun^H}\left(\diag(\cJ_1^H(\mu_1,\overline{\mu_1}),\hdots,\cJ_1^H(\mu_k,\overline{\mu_k}))\right),$$ with $\mu_1,\hdots,\mu_k \in \CC$ being different to each other and having positive imaginary part.

\item[{\rm(c)}] If $\mu\in\CC$ has positive imaginary part, then $\overline{\bun^H} \left( \cJ_k^H(\mu,\overline{\mu})\oplus{\cal M}_d \right) \subseteq\overline{\orb^H}\left({\cal M}_{d+k}\right)$.

\item[{\rm(d)}] $\overline{\orb^H} \left( \diag({\cal M}_{d_1},\hdots,{\cal M}_{d_t}) \right) \subseteq\overline{\orb^H}(\diag(\underbrace{{\cal M}_{\alpha+1},\hdots,{\cal M}_{\alpha+1}}_{\mbox{\tiny $s$}},\underbrace{{\cal M}_{\alpha},\hdots,{\cal M}_{\alpha})}_{\mbox{\tiny $t-s$}}),$ with \break$\sum_{i=1}^{t} d_i=t\alpha+s$ being the Euclidean division of $\sum_{i=1}^{t}d_i$ by $t$.

\end{itemize}
\end{proposition}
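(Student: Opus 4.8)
The plan is to prove each of the four inclusions (a1)--(a2), (b), (c), (d) by exhibiting explicit one-parameter families of Hermitian pencils in the bundle (or orbit) on the right-hand side that converge, as the parameter tends to a limit, to the canonical block(s) on the left-hand side; then Lemma~\ref{lemm.trivialbundle}-(d) upgrades the pointwise limit to the claimed closure inclusion, and Lemma~\ref{lemm-direcsum2} will let us reduce statements about big blocks to statements about the constituent $1\times1$ (or small) blocks whenever the direct-sum structure is visible. Throughout, because bundles are closed under congruence (Lemma~\ref{lemm.trivialbundle}-(c),(d)), I am free to work with any convenient congruent representative rather than the precise anti-triangular normal forms; and because bundles ignore the specific values of distinct finite eigenvalues, I only need the \emph{eigenstructure} (block sizes, signs, and which eigenvalues are real vs.\ a conjugate pair) of the perturbed pencils to be constant along each family.

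For (a1) and (a2), the idea is the classical ``splitting a Jordan block into distinct eigenvalues'' deformation, but carried out $*$-congruently and with careful bookkeeping of signs. Take $\sigma\cJ_k^H(a)$ with $a\in\RR$; a $*$-congruence transformation (a perturbation of the anti-diagonal identity appearing in $\cJ_k^H$) turns it into a Hermitian pencil congruent to $\sigma\,\mathrm{diag}(\la-a)\oplus(\text{off-diagonal remainder})$, and a standard perturbation of the nilpotent part of the Jordan-like block replaces the $k$-fold eigenvalue $a$ by $k$ nearby \emph{simple} eigenvalues. The subtlety is purely about inertia/sign characteristic: a real Jordan block of even size $k$ with sign $\sigma$ has, after splitting, an equal number of $+$ and $-$ signs up to one pair (this is exactly why $k/2-1$ conjugate pairs plus a $(+,-)$ pair appear, independent of $\sigma$), whereas a real block of odd size $k$ leaves a single leftover real eigenvalue whose sign is $\sigma$ (hence $(k-1)/2$ conjugate pairs plus $\sigma\cJ_1(\widetilde a)$); for $\cJ_k^H(\infty)$ the infinite eigenvalue of even size $k$ splits into $k/2-1$ conjugate pairs plus two real signs both equal to $\sigma$, and of odd size into $(k-1)/2$ conjugate pairs plus one real $\sigma$. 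I would verify these sign counts by computing the signature of $\cA\cH(\la_0)$ or, more cleanly, by invoking the known inertia rules for the sign characteristic (e.g.\ from \cite{LaRo05,mntx20}) applied to the limit: since a simple real eigenvalue of a Hermitian pencil contributes $\pm1$ to a running inertia that must match that of the $k\times k$ block $\sigma\cJ_k^H(a)$, whose relevant signature is determined by $\sigma$ and the parity of $k$. One also needs non-real eigenvalues to be available: deform a pair of real simple eigenvalues $c_1,c_2$ with signs $(+,-)$ into a conjugate pair $\mu,\overline\mu$ (this is the elementary $2\times2$ fact that $\mathrm{diag}(\la-c_1,-(\la-c_2))$ lies in the closure of the bundle of $\cJ_1^H(\mu,\overline\mu)$, and conversely), which explains both the form of the right-hand sides and why the signs in (a1) first case are forced to be $(+,-)$.

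For (b), iterate the $2\times2$ perturbation $\cJ_1^H(\mu,\overline\mu)$: a single Jordan-like block $\cJ_k^H(\mu,\overline\mu)$ for a conjugate pair can be split, by perturbing its nilpotent part, into $k$ Jordan-like blocks $\cJ_1^H(\mu_i,\overline{\mu_i})$ with the $\mu_i$ distinct and close to $\mu$ in the upper half-plane; since no real eigenvalues or signs are created, no inertia bookkeeping is needed and the inclusion follows directly. For (c), the point is that a conjugate pair together with a singular block $\cM_d$ can be absorbed into a larger singular block: one shows $\cJ_k^H(\mu,\overline\mu)\oplus\cM_d$ lies in $\overline{\orb^H}(\cM_{d+k})$ by a deformation that coalesces the eigenvalue into the minimal-index structure --- this is the Hermitian analogue of the unstructured fact that a regular $2k\times2k$ pencil with a $2k$-fold eigenvalue sits in the closure of the orbit of a suitable singular block, and I would build the explicit family by perturbing the zero entries of (a congruent copy of) $\cM_{d+k}$ so that a $\cJ_k^H(\mu,\overline\mu)$ block and an $\cM_d$ block emerge. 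For (d), the ``balancing of minimal indices'' statement, the cleanest route is induction on the discrepancy $\max_i d_i - \min_i d_i$: show the single move $\cM_{d_i}\oplus\cM_{d_j}\in\overline{\orb^H}(\cM_{d_i+1}\oplus\cM_{d_j-1})$ whenever $d_i\le d_j-2$, by an explicit perturbation of $\cM_{d_i+1}\oplus\cM_{d_j-1}$, and iterate; since each move preserves the number of blocks $t$ and the total sum $\sum d_i$, the process terminates precisely at the near-balanced configuration with parameters given by the Euclidean division, and transitivity of orbit-closure inclusion finishes it. (This last fact is the Hermitian counterpart of the classical majorization characterisation of the closure order on minimal indices, cf.\ \cite{ddd20}.)

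The main obstacle will be (a1)--(a2): not the limit constructions themselves, which are routine nilpotent perturbations, but proving that the \emph{signs} of the surviving real eigenvalues after splitting are exactly those asserted --- in particular that they depend only on $\sigma$ and the parity of $k$, and that the even-$k$ finite case forces a sign pair $(+,-)$ (equivalently, no net sign, which is what allows it to be further absorbed into a conjugate pair). I expect to handle this by reducing to a local inertia computation: evaluate a congruent representative of $\sigma\cJ_k^H(a)$ (respectively the $\infty$ block) at a real point near $a$ and compare its signature with that of the split form, using that congruence preserves inertia and that the sign characteristic of a Hermitian pencil is read off from the jumps of the inertia of $\cH(\la)$ as $\la$ passes through each real eigenvalue. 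Once the $1\times1$ sign labels are pinned down, assembling the full inclusions via Lemma~\ref{lemm-direcsum2} (choosing the perturbed eigenvalues distinct across blocks) is immediate.
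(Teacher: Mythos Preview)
Your overall strategy and your treatment of (b) and (d) are fine and match the paper's. There is, however, a genuine gap in your plan for (a1)--(a2). Your two-step route first splits $\sigma\cJ_k^H(a)$ into $k$ simple \emph{real} eigenvalues with the right signs, and then ``deforms a pair of real simple eigenvalues $c_1,c_2$ with signs $(+,-)$ into a conjugate pair''. The second step fails: the asserted $2\times2$ fact $\diag(\la-c_1,-(\la-c_2))\in\overline{\bun^H}(\cJ_1^H(\mu,\overline\mu))$ is \emph{false} for $c_1\ne c_2$, since any nearby Hermitian pencil has two eigenvalues close to the distinct reals $c_1,c_2$ and therefore cannot have a non-real conjugate pair. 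More globally, the ``all-real'' bundle you pass through and the target ``mixed'' bundle are two different generic bundles $\cR_{c,d}$ of Theorem~\ref{regular_th}, and part (ii) there shows that neither lies in the closure of the other; so transitivity through the all-real split cannot yield the required inclusion.

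The paper instead uses a \emph{single} perturbation landing directly in the target bundle: add $\varepsilon/m$ (respectively $(\varepsilon/m)\la$ for the $\infty$ block) to the $(k,k)$ corner of $\sigma\cJ_k^H(a)$. The determinant becomes $\pm\big((\la-a)^k-\varepsilon/m\big)$, whose roots are $a$ plus the $k$th roots of $\varepsilon/m>0$: exactly two real values and $k/2-1$ non-real conjugate pairs when $k$ is even, one real value and $(k-1)/2$ pairs when $k$ is odd. The signs of the surviving real eigenvalues are then pinned down by comparing the signature of the \emph{leading coefficient} (a $*$-congruence invariant) on both sides: for even $k$ and finite $a$ it is $(k/2,k/2,0)$, and since each $\cJ_1^H(\mu_i,\overline{\mu_i})$ contributes $(1,1,0)$, the two real $1\times1$ blocks must contribute $(1,1,0)$, forcing signs $(+,-)$ regardless of $\sigma$; the odd-$k$ and $\infty$ cases are handled by the same signature count. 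Your fallback ``local inertia'' idea is in the right spirit, but it is this global signature comparison --- not inertia jumps along the real axis --- that makes the sign determination clean. (For (c) your sketch is also vague about direction; the paper's argument is to observe that, after a permutation, $\cJ_k^H(\mu,\overline\mu)\oplus\cM_d$ and $\cM_{d+k}$ share the same $2\times2$ block anti-diagonal shape, and then to lift the known unstructured fact that $\cJ_k^H(\mu)\oplus\cL_d$ lies in the strict-equivalence orbit closure of $\cL_{d+k}$ \cite{Bole98,Pokr86} to the Hermitian orbit via that block structure.)
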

\begin{proof} Part (a1). Assume that $k$ is even and $a\in\RR$, and note that $\sigma {\cal J}^H_k(a)$ is the limit of the following sequence of Hermitian pencils (as $m\in\NN$ tends to infinity):
$$
\sigma{\cal S}_{(a,k)}^{(\varepsilon,m)}(\la):=\sigma\left[\begin{array}{cccccc}&&&1&\la-a\\&&1&\la-a\\&\iddots&\iddots&&\\1&\la-a&\\\la-a&&&&\varepsilon/m\end{array}\right]_{k\times k},
$$
with $\varepsilon>0$. By a direct calculation, we get $\det \sigma \cS_{(a,k)}^{(\varepsilon,m)}(\la)=(-1)^{\frac{k}{2}}\left((\la-a)^k-\frac{\varepsilon}{m}\right)$, so the eigenvalues of $\sigma \cS_{(a,k)}^{(\varepsilon,m)}(\la)$ are $a+(\frac{\varepsilon}{m})^{1/k}$, where $(\frac{\varepsilon}{m})^{1/k}$ denote the $k$ different $k$th roots of $\frac{\varepsilon}{m}$. Since $k$ is even and $\frac{\varepsilon}{m}>0$, these consist of $2$ real values and $\frac{k}{2}-1$ pairs of complex conjugate non-real distinct numbers. Therefore,
$$
{\rm HKCF}\left(\sigma\cS_{(a,k)}^{(\varepsilon,m)}\right)= \diag\left(\cJ_1^H(\mu_1,\overline{\mu_1}),\hdots,\cJ_1^H(\mu_{\frac{k}{2}-1},\overline{\mu_{\frac{k}{2}-1}}),\pm\cJ_1(a_1),\pm\cJ_1(a_2)\right),
$$
where $\mu_1,\hdots,\mu_{\frac{k}{2}-1},\overline{\mu_1},\hdots,\overline{\mu_{\frac{k}{2}-1}}$ are the non-real eigenvalues of $\sigma \cS_{(a,k)}^{(\varepsilon,m)}(\la)$, $a_1,a_2$ are the real eigenvalues, and we take $\mu_1,\hdots,\mu_{\frac{k}{2}-1}$ with positive imaginary part. In order to fix the signs in $\left(\pm\cJ_1(a_1),\pm\cJ_1(a_2)\right)$, we will use that the matrix leading terms of $\sigma \cS_{(a,k)}^{(\varepsilon,m)}(\la)$ and of the pencil in the right-hand side of the previous equality are $*$-congruent and so have the same signature. Since the leading term of $\sigma\cS_{(a,k)}^{(\varepsilon,m)}$ is a symmetric $k\times k$ matrix with trace $0$ and eigenvalues $\pm1$, its signature is $(k/2,k/2,0)$. Now, since the signature of the leading term of each block $\cJ_1^H(\mu_i,\overline{\mu_i})$ is $(1,1,0)$, for $i=1,\hdots,\frac{k}{2}-1$, then the signature of the leading coefficient of the remaining block $\diag(\pm\cJ_1(a_1),\pm\cJ_1(a_2))$ must be $(1,1,0)$ as well. As a consequence,
$$
\sigma\cS_{(a,k)}^{(\varepsilon,m)}(\la)\in\bun^H\left(\diag(\cJ_1^H(\mu_1,\overline{\mu_1}),\hdots,\cJ_1^H(\mu_{\frac{k}{2}-1},\overline{\mu_{\frac{k}{2}-1}}),\cJ_1(a_1),-\cJ_1(a_2))\right),
$$
with $a_1,a_2\in\RR$, and $a_1,a_2,\mu_1,\hdots,\mu_{\frac{k}{2}-1}$ being different to each other, as wanted. Since the arguments above are independent of the specific value of $a \in \RR$, using Lemma \ref{lemm.trivialbundle}-(d) we conclude that
$$
\bun^H\left(\sigma{\cal J}_k^H(a) \right) \subseteq
\overline{\bun^H}\left(\diag(\cJ_1^H(\mu_1,\overline{\mu_1}),\hdots,\cJ_1^H(\mu_{\frac{k}{2}-1},\overline{\mu_{\frac{k}{2}-1}}),\cJ_1(a_1),-\cJ_1(a_2))\right),
$$
and the result follows by the definition of closure.

Now, let us prove the claim for $\sigma\cJ_k^H(\infty)$. In this case, we consider the Hermitian perturbation
\begin{equation}\label{infpert}
\sigma {\cal S}_{(\infty,k)}^{(\varepsilon,m)}(\la):=\sigma\begin{bmatrix}
&&\la&1\\
&\iddots&\iddots\\
\la&1&&\\
1&&&\frac{\varepsilon}{m}\la
\end{bmatrix},
\end{equation}
with $\varepsilon>0$. Note that $\sigma {\cal S}_{(\infty,k)}^{(\varepsilon,m)}(\la)$ tends to $\sigma\cJ_k^H(\infty)$ as $m$ tends to infinity. A direct calculation gives $\det(\sigma {\cal S}_{(\infty,k)}^{(\varepsilon,m)}(\la))=(-1)^{k/2}(1-\frac{\varepsilon}{m}\la^k)$, so the eigenvalues of $\sigma {\cal S}_{(\infty,k)}^{(\varepsilon,m)}(\la)$ are the $k$th roots of $\frac{m}{\varepsilon}$. Since $\varepsilon>0$, these are $\frac{k}{2}-1$ pairs of complex conjugate (non-real) numbers and two real numbers. Hence,
$$
{\rm HKCF}\left(\sigma {\cal S}_{(\infty,k)}^{(\varepsilon,m)}\right)=\diag\left(\cJ_1^H(\mu_1,\overline{\mu_1}),\hdots,\cJ_1^H(\mu_{\frac{k}{2}-1},\overline{\mu_{\frac{k}{2}-1}}),\pm\cJ_1(a_1),\pm\cJ_1(a_2)\right),
$$
with $a_1,a_2\in\RR$ and $\mu_1,\hdots,\mu_{\frac{k}{2}-1}$ having positive imaginary part, and all them being different to each other. The signs of $(\pm\cJ_1(a_1),\pm\cJ_1(a_2))$ are determined again by the signatures of the leading terms. The leading term of $\sigma {\cal S}_{(\infty,k)}^{(\varepsilon,m)}$ is
$$
T^{(\varepsilon,m)}:=\sigma\begin{bmatrix}
&&1&0\\
&\iddots&\iddots&\\
1&0&\\
0&&&\frac{\varepsilon}{m}
\end{bmatrix}.
$$
The eigenvalues of $\sigma T^{(\varepsilon,m)}$ are the following:  $(k-2)/2$ of them are equal to $+1$, $(k-2)/2$ are equal to $-1$, another one is equal to $\sigma$, and the last one is equal to $\sigma\frac{\varepsilon}{m}$. Since the signature of the leading term of $\cJ_1^H(\mu,\overline\mu)$ is $(1,1,0)$, the signature of the leading term of $\diag(\pm\cJ_1(a_1),\pm\cJ_1(a_2))$ must be the one of the leading term of  $\diag(\sigma\cJ_1(a_1), \allowbreak\mbox{sign}(\sigma\frac{\varepsilon}{m})\, \cJ_1(a_2))$. Since $\varepsilon>0$  and all the pencils in $\bun^H\left(\sigma{\cal J}_k^H(\infty) \right)$ are congruent to each other, the result follows.

Part (a2). The argument when $a\in\RR$ is similar to the one in part (a1). Thus, we only sketch the main ideas. In this case, $\det(\sigma\cS_{(a,k)}^{(\varepsilon,m)}(\la))=\sigma^k(-1)^{\frac{k-1}{2}}\left((\la-a)^k+\frac{\varepsilon}{m}\right)$, so $\sigma \cS_{(a,k)}^{(\varepsilon,m)}$ has only one real eigenvalue, and the remaining $k-1$ eigenvalues are couples of complex conjugate non-real distinct numbers, namely $\sigma\cS_{(a,k)}^{(\varepsilon,m)}$ is $*$-congruent to $\diag(\cJ_1^H(\mu_1,\overline{\mu_1}),\hdots,\cJ_1^H(\mu_{\frac{k-1}{2}},\overline{\mu_{\frac{k-1}{2}}}),\pm\cJ_1(\widetilde a))$, for some $\mu_1,\hdots,\mu_{\frac{k-1}{2}}$ being different to each other and having positive imaginary part, and $\widetilde a\in\RR$.  The signature of the leading coefficient of $\sigma\cS_{(a,k)}^{(\varepsilon,m)}$ is  $((k+1)/2,(k-1)/2,0)$ if $\sigma=1$ and $((k-1)/2,(k+1)/2,0)$ if $\sigma=-1$. Since, again, the signature of the leading coefficient of $\cJ_1^H(\mu,\overline{\mu})$ is $(1,1,0)$, we conclude that the sign of the block $\cJ_1(a)$ must be equal to $\sigma$.  The fact that the arguments above are again independent of the specific value of $a \in \RR$ allows us to get the desired inclusion of the bundle closures.

For $\sigma \cJ_k^H(\infty)$ in (a2) we can use a similar argument as for the case $k$ even. In this case the eigenvalues of \eqref{infpert} are the $k$th roots of $-\frac{m}{\varepsilon}$, so there is just one real root and $(k-1)/2$ pairs of complex conjugate (non-real) roots. Therefore,
$$
{\rm HKCF}\left(\sigma {\cal S}_{(\infty,k)}^{(\varepsilon,m)}\right)=\diag\left(\cJ_1^H(\mu_1,\overline{\mu_1}),\hdots,\cJ_1^H(\mu_{\frac{k-1}{2}},\overline{\mu_{\frac{k-1}{2}}}),\pm\cJ_1(\widetilde{a})\right),
$$
with $\widetilde{a}\in\RR$ and $\mu_1,\hdots,\mu_{\frac{k-1}{2}}$ being different to each other and having positive imaginary part.  Reasoning as for the $k$ even case with the signatures of the leading terms of $\sigma {\cal S}_{(\infty,k)}^{(\varepsilon,m)}$ and of ${\rm HKCF}\left(\sigma {\cal S}_{(\infty,k)}^{(\varepsilon,m)}\right)$, which are equal to each other, we conclude that the sign of the block $\pm\cJ_1(\widetilde{a})$ must be equal to $\sigma$.

Part (b). Note that, given an arbitrary parameter $\varepsilon>0$, the Hermitian pencil
$$
\begin{array}{ccl}
\cJ_k^H(\mu,\overline{\mu})+T^{(k,m,\varepsilon)} &:=&\left[\begin{array}{cccc|cccc}
     &&&&&&1&\la-\overline\mu\\&&&&&\iddots&\iddots&\\&&&&1&\la-\overline\mu&&\\&&&&\la-\overline{\mu}&&& \\\hline &&1&\la-\mu&&&&\\&\iddots&\iddots&&&&&\\1&\la-\mu&&&&&&\\\la-\mu&&&&&&&
\end{array}\right]\\&&+\left[\begin{array}{cccc|cccc}&&&&&&&\frac{\varepsilon}{m}\\&&&&&&\iddots&\\&&&&&\frac{(k-1)\varepsilon}{m}&&\\&&&&\frac{k\varepsilon}{m}&&&\\\hline&&&\frac{k\varepsilon}{m}&&&&\\&&\frac{(k-1)\varepsilon}{m}&&&&&\\&\iddots&&&&&&\\\frac{\varepsilon}{m}&&&&&&&\end{array}\right]
\end{array}
$$
has eigenvalues $\mu-\frac{\varepsilon}{m},\overline{\mu-\frac{\varepsilon}{m}},\hdots,\mu-k\frac{\varepsilon}{m},\overline{\mu-k\frac{\varepsilon}{m}}$, which are all different to each other, so
$$
\cJ_k^H(\mu,\overline{\mu})+T^{(k,m,\varepsilon)}\in\bun^H\left(\diag(\cJ_1^H(\mu_1,\overline{\mu_1}),\hdots,\cJ_1^H(\mu_k,\overline{\mu_k})\right),
$$
with $\mu_1,\hdots,\mu_k$ being different to each other and having positive imaginary part. Then $\lim_{m\rightarrow \infty}\cJ_k^H(\mu,\overline{\mu})+T^{(k,m,\varepsilon)} = \cJ_k^H(\mu,\overline{\mu})$, which implies that $\cJ_k^H (\mu,\overline{\mu}) \in \overline{\bun^H}\left(\diag(\cJ_1^H(\mu_1,\overline{\mu_1}),\hdots,\cJ_1^H(\mu_k,\overline{\mu_k})\right)$. Since this is valid for any $\mu\in\CC$ with positive imaginary part, we get that $\bun^H(\cJ_k^H (\mu,\overline{\mu}) ) \subseteq \overline{\bun^H}\left(\diag(\cJ_1^H(\mu_1,\overline{\mu_1}),\allowbreak \hdots, \allowbreak \cJ_1^H(\mu_k,\overline{\mu_k})\right),$ using Lemma \ref{lemm.trivialbundle}-(d), which implies the result because of the definition of closure.

Part (c). First note that, if $P$ is the block permutation matrix
$$
P=\begin{bmatrix}
I_k&0&0&0\\
0&0&I_k&0\\
0&I_{d+1}&0&0\\
0&0&0&I_d\\
\end{bmatrix},
$$
then 
\begin{equation*}
\begin{aligned}
P^*\Big(&\cJ_k^H(\mu,\overline{\mu})\oplus{\cal M}_d(\la)\Big)P \\
&=\small \begin{bmatrix}
I_k&0&0&0\\
0&0&I_{d+1}&0\\
0&I_k&0&0\\
0&0&0&I_d\\
\end{bmatrix}
\begin{bmatrix}0&\cJ_k^H(\overline\mu)&0&0\\\cJ_k^H(\mu)&0&0&0\\0&0&0&\cL_d(\la)^\top\\0&0&\cL_d(\la)&0\end{bmatrix}
\begin{bmatrix}
I_k&0&0&0\\
0&0&I_k&0\\
0&I_{d+1}&0&0\\
0&0&0&I_d\\
\end{bmatrix}
\\
&=\begin{bmatrix}0&0&\cJ_k^H(\overline\mu)&0\\0&0&0&\cL_d(\la)^\top\\\cJ_k^H(\mu)&0&0&0\\0&\cL_d(\la)&0&0\end{bmatrix}
\\
&=\begin{bmatrix}
0&\cJ_k^H(\overline{\mu})\oplus{\cal L}_d(\la)^\top \\
\cJ_k^H(\mu)\oplus{\cal L}_d(\la)&0\\
\end{bmatrix}.
\end{aligned}
\end{equation*}
Therefore, $\cJ_k^H(\mu,\overline{\mu})\oplus{\cal M}_d(\la)$ is $*$-congruent to $\left[\begin{smallmatrix}
0&\cJ_k^H(\overline{\mu})\oplus{\cal L}_d(\la)^\top \\
\cJ_k^H(\mu)\oplus{\cal L}_d(\la)&0\\
\end{smallmatrix}\right]$.

By \cite[Section 5.1]{Bole98} (or \cite[Lemma 5]{Pokr86}), there exist two invertible matrices $R$ and $Q$ and an arbitrarily small (entry-wise for each coefficient) matrix pencil ${\cal E}$ such that
\begin{equation*}
R\left( \cJ_k^H(\mu)\oplus{\cal L}_d(\la)+{\cal E} \right) Q = {\cal L}_{d+k}(\la).
\end{equation*}
Applying the $*$ operator to both sides of the previous equality, we obtain
\begin{equation*}
Q^*\left( \cJ_k^H(\overline{\mu})\oplus{\cal L}_d(\la)^\top
+{\cal E}^* \right) R^{*} = {\cal L}_{d+k}(\la)^\top.
\end{equation*}
Combining these two identities we obtain
\begin{equation*}
\begin{aligned}
\begin{bmatrix}
Q^*&0\\
0&R\\
\end{bmatrix}
\left(
\begin{bmatrix}
0&\cJ_k^H(\overline{\mu})\oplus{\cal L}_d(\la)^\top\\
\cJ_k^H(\mu)\oplus{\cal L}_d(\la)&0\\
\end{bmatrix}
+
\begin{bmatrix}
0&{\cal E}^*\\
{\cal E}&0\\
\end{bmatrix}
 \right)
\begin{bmatrix}
Q &0\\
0&R^*\\
\end{bmatrix} \\=
\begin{bmatrix}
0&{\cal L}_{d+k}(\la)^\top\\
{\cal L}_{d+k}(\la)&0\\
\end{bmatrix}.
\end{aligned}
\end{equation*}
Since $\cal E$ is arbitrarily small, this implies that $\left[\begin{smallmatrix}
0&\cJ_k^H(\overline{\mu})\oplus{\cal L}_d(\la)^\top \\
\cJ_k^H(\mu)\oplus{\cal L}_d(\la)&0\\
\end{smallmatrix}\right]\in\overline{\orb^H}(\cM_{d+k)}$, so $\cJ_k^H(\mu,\overline{\mu})\oplus\cM_d(\la)\in\overline{\orb^H}(\cM_{d+k})$ as well. Again, the fact that this is valid for any $\mu \in \CC$ with positive imaginary part allows us to get the inclusion of the corresponding bundle-orbit closures.

Part (d). Note that $\diag({\cal M}_{d_1},\hdots,{\cal M}_{d_t}) \in \overline{\orb^H}(\diag({\cal M}_{\alpha+1},\hdots,{\cal M}_{\alpha+1}, \allowbreak {\cal M}_{\alpha},\hdots,{\cal M}_{\alpha}))$ (with the number of blocks in the statement) was already proven in the proof of Theorem 3 in \cite{ddd20}. This result immediately implies the inclusion of the corresponding orbit closures.
\end{proof}

The final technical result of this section is Lemma \ref{lemm.fromlowrank}, which will allow us to simplify the proofs of our main results (Theorems \ref{regular_th} and \ref{main_th}).

\vspace{0.2cm}

\begin{lemma} \label{lemm.fromlowrank} Let $\widetilde r, r$, and $n$ be nonnegative integers such that $0\leq \widetilde r < r \leq n$. Let $\widetilde \cH \in \pen_{n\times n}^H$ be a Hermitian pencil with $\rank \widetilde \cH = \widetilde r$. Then, there exists $\cH \in \pen_{n\times n}^H$ with $\rank \cH =  r$ such that $\overline{\bun^H} ( \widetilde \cH) \subseteq \overline{\bun^H} (\cH)$.
\end{lemma}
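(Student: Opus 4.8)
Because a Hermitian pencil and any pencil congruent to it lie in the same bundle (Lemma~\ref{lemm.trivialbundle}(a),(c)), I would first assume $\widetilde\cH$ is in HKCF and write $\widetilde\cH=\cR\oplus\cM_{d_1}\oplus\cdots\oplus\cM_{d_p}$, where $\cR$ gathers the blocks of types (i)--(iii) of Theorem~\ref{HCKF_th} (hence $\cR$ is regular) and $\cM_{d_1},\ldots,\cM_{d_p}$ are the singular blocks. Since $\rank\cM_d=2d$ and $\cR$ is regular, comparing sizes and ranks of $\widetilde\cH$ yields $p=n-\widetilde r$, so the hypotheses give $1\le\delta:=r-\widetilde r\le p$. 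The plan is then to keep $\cR$ and $p-\delta$ of the singular blocks unchanged and to replace $\delta$ of them, say $\cM_{d_1},\ldots,\cM_{d_\delta}$, by \emph{regular} Hermitian pencils of the same sizes $2d_1+1,\ldots,2d_\delta+1$: this keeps the overall size equal to $n$ while increasing the rank by exactly $\delta$, producing a Hermitian pencil $\cH$ of rank $r$.

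\textbf{The block-level step.} The key point I would need is: \emph{for every Hermitian pencil $\cP$ of size $m$ there is a regular Hermitian pencil $\cK$ of size $m$ with $\cP\in\overline{\bun^H}(\cK)$} (equivalently, by the definition of closure, $\overline{\bun^H}(\cP)\subseteq\overline{\bun^H}(\cK)$). To establish this I would let $W$ be the set of Hermitian $m\times m$ pencils that are regular, have no infinite eigenvalue, and have $m$ pairwise-distinct finite eigenvalues. Writing a Hermitian pencil as $\cQ=A_\cQ+\la B_\cQ$, the complement of $W$ is the real-algebraic set on which $\det B_\cQ=0$ or $\det(\la B_\cQ+A_\cQ)$ has a repeated root; it is proper (it misses $\la I-\diag(1,\ldots,m)$), so $W$ is open and dense and $\cP\in\overline W$. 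By Theorem~\ref{HCKF_th}, each pencil of $W$ has an HKCF of the form $\bigoplus_j\sigma_j\cJ_1(a_j)\oplus\bigoplus_k\cJ_1^H(\mu_k,\overline{\mu_k})$; thus $W$ is contained in the union of the finitely many Hermitian bundles determined by the numbers of summands of the forms $+\cJ_1$, $-\cJ_1$, and $\cJ_1^H(\mu,\overline\mu)$, and since the closure of a finite union is the union of the closures, $\cP$ lies in the closure of one of them. (An alternative, more in the style of Proposition~\ref{propo}, would be to exhibit explicit regularizing Hermitian perturbations of each $\cM_d$.)

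\textbf{Reassembly.} Applying the block-level step to $\cM_{d_1},\ldots,\cM_{d_\delta}$ gives regular Hermitian pencils $\cK_i$ of size $2d_i+1$ with $\cM_{d_i}\in\overline{\bun^H}(\cK_i)$. Since a bundle contains representatives with arbitrary distinct finite eigenvalues, I would then choose $\cK_i'\in\bun^H(\cK_i)$ so that $\cR,\cK_1',\ldots,\cK_\delta'$ have pairwise-disjoint sets of finite eigenvalues (only finitely many values must be avoided), and set $\cH:=\cR\oplus\cK_1'\oplus\cdots\oplus\cK_\delta'\oplus\cM_{d_{\delta+1}}\oplus\cdots\oplus\cM_{d_p}$, which is Hermitian, of size $n$, and of rank $r$. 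The inclusion $\overline{\bun^H}(\widetilde\cH)\subseteq\overline{\bun^H}(\cH)$ is then exactly Lemma~\ref{lemm-direcsum2} for the matched direct-sum decompositions of $\widetilde\cH$ and $\cH$: the paired summands have equal sizes, $\overline{\bun^H}(\cM_{d_i})\subseteq\overline{\bun^H}(\cK_i)=\overline{\bun^H}(\cK_i')$ for $i\le\delta$ while the other summands coincide, and the target summands $\cR,\cK_1',\ldots,\cK_\delta',\cM_{d_{\delta+1}},\ldots,\cM_{d_p}$ have pairwise-disjoint finite eigenvalues (the $\cM$-blocks having none). I expect the main obstacle to be the block-level step --- raising a singular block to a full-rank regular pencil while keeping it in a prescribed bundle closure --- with the eigenvalue-separation bookkeeping needed for Lemma~\ref{lemm-direcsum2} a secondary but necessary chore.
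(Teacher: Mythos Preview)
Your argument is correct and takes a genuinely different route from the paper's. The paper proceeds by an \emph{explicit} block perturbation: it shows that $\cM_d + \tfrac{1}{m}E_d$ (with $E_d$ the $(1,1)$ elementary matrix) is regular with $\Hb$ exactly $+\cJ_{2d+1}^H(\infty)$, the sign being pinned down by a signature computation; then it perturbs $r-\widetilde r$ of the $\cM$-blocks in $\Hb(\widehat\cH)$ simultaneously, for an arbitrary $\widehat\cH\in\bun^H(\widetilde\cH)$, and observes that the resulting sequence lies in a single bundle $\bun^H(\cH)$ that is independent of $\widehat\cH$. So the paper builds the limiting sequence by hand and never invokes Lemma~\ref{lemm-direcsum2}.

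Your approach trades explicitness for modularity: the density argument (that the complement of $W$ is a proper real-algebraic subset of $\pen_{m\times m}^H$, so $W$ is open and dense and equals a finite union of bundles) gives \emph{some} regular target bundle for each $\cM_{d_i}$ without computing what it is, and then Lemma~\ref{lemm-direcsum2} does the reassembly once you have separated finite eigenvalues. This avoids the signature calculation entirely and reuses machinery already in the paper. The price is that you do not know which $\cH$ you land in (the paper's $\cH$ has concrete $\cJ_{2d_i+1}^H(\infty)$ blocks), but since the lemma is only used downstream to reduce to pencils of rank exactly $r$, the explicit form is not needed there either.
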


\begin{proof} Note that the $\Hb (\widetilde \cH)$ has at least one block $\cM_d$. Thus, in the first part of the proof, we consider the sequence of perturbed blocks $\cM_d^{(m)} := \cM_d + \frac{1}{m}  E_d$, where $m = 1,2,\ldots$, and $E_d$ is a $(2d +1)\times (2d +1)$ constant matrix that has $1$ in the $(1,1)$-entry and zeroes elsewhere. It is immediate to prove that $\det \cM_d^{(m)} = \pm \frac{1}{m}$, which implies that $\cM_d^{(m)}$ is regular with all its eigenvalues equal to $\infty$. Moreover the rank of the leading coefficient of $\cM_d^{(m)}$ is $2d$, which implies that $\Hb (\cM_d^{(m)}) = \pm \cJ_{2d+1}^H(\infty)$.
In order to determine the correct sign $\sigma$ note that the signatures of the zero degree coefficients of
$\cM_d^{(m)}$ and $\sigma \cJ_{2d+1}^H(\infty)$ must be the same. The signature of the zero degree coefficient of $\cM_d^{(m)}$ is $(d+1,d,0)$, while the one of the zero degree coefficient of $\cJ_{2d+1}^H(\infty)$ is also $(d+1,d,0)$. Thus, the correct sign is $\sigma = +1$ and we have proved that, for all $m =1, 2,\ldots$,
\begin{equation} \label{eq.mde1}
\Hb(\cM_d^{(m)}) = \Hb( \cM_d + \frac{1}{m}  E_d )= \cJ_{2d+1}^H(\infty).
\end{equation}

Next, note that it is enough to find a Hermitian pencil $\cH$ with $\rank \cH = r$ such that $\bun^H ( \widetilde \cH) \subseteq \overline{\bun^H} (\cH)$ due to the definition of closure. Let $\widehat \cH \in \bun^H ( \widetilde \cH)$. Then
$$
\Hb (\widehat \cH ) = \left(\bigoplus_{i=1}^{n-r} \cM_{d_i} \right) \oplus \left(\bigoplus_{i=n-r+1}^{n-\widetilde r} \cM_{d_i} \right) \oplus \Hb_{reg} (\widehat \cH),
$$
where $\Hb_{reg} (\widehat \cH)$ includes all the blocks of types (i), (ii), and (iii) in Theorem \ref{HCKF_th} of the $\Hb$ of $\widehat \cH$. Observe that, for all the pencils in $\bun^H ( \widetilde \cH)$, the parameters $d_1, \ldots , d_{n-\widetilde r}$ are the same and $\Hb_{reg} (\widehat \cH)\simeq \Hb_{reg} (\widetilde \cH)$. Then, let us define the sequence of pencils
$$
\cH^{(m)} = \left(\bigoplus_{i=1}^{n-r} \cM_{d_i} \right) \oplus \left(\bigoplus_{i=n-r+1}^{n-\widetilde r} \left( \cM_{d_i} + \frac{1}{m} E_{d_i} \right) \right) \oplus \Hb_{reg} (\widehat \cH),
$$
that satisfies $\lim_{m\rightarrow \infty} \cH^{(m)}
= \Hb (\widehat \cH )$. Moreover \eqref{eq.mde1} implies that, for all $m\in\mathbb N$, $\rank \cH^{(m)} = r$ and
\begin{equation} \label{eq.bigoplusrank}
\Hb(\cH^{(m)}) = \left(\bigoplus_{i=1}^{n-r} \cM_{d_i} \right) \oplus \left(\bigoplus_{i=n-r+1}^{n-\widetilde r} \cJ_{2d_i+1}^H(\infty)  \right) \oplus \Hb_{reg} (\widehat \cH),
\end{equation}
which is independent of $m$. From the right-hand side of \eqref{eq.bigoplusrank}, we define
$$
\cH := \left(\bigoplus_{i=1}^{n-r} \cM_{d_i} \right) \oplus \left(\bigoplus_{i=n-r+1}^{n-\widetilde r} \cJ_{2d_i+1}^H(\infty)  \right) \oplus \Hb_{reg} (\widetilde \cH),
$$
which is independent of $m$ and of the particular $\widehat \cH$ in
$\bun^H ( \widetilde \cH)$ we are considering. Observe that $\rank \cH = r$ and that, since  $\Hb_{reg} (\widehat \cH)\simeq \Hb_{reg} (\widetilde \cH)$, then $\cH^{(m)} \in \bun^H ( \cH)$ for all $m$. Since $\lim_{m\rightarrow\infty}\cH^{(m)}=\Hb(\widehat \cH)$, we conclude that $\Hb (\widehat \cH ) \in \overline{\bun^H} ( \cH)$, which combined with Lemma \ref{lemm.trivialbundle}-(d) implies  $\widehat \cH  \in \overline{\bun^H} ( \cH)$, and this proves the result.
\end{proof}

\section{The case of general Hermitian pencils}\label{regular_sec}
In Theorem \ref{regular_th} we present the generic eigenstructures of complex Hermitian $n\times n$ pencils, and we describe the set $\pen_{n\times n}^H$ as a finite union of closed sets, which are the closures of the bundles corresponding to these generic eigenstructures.  Theorem \ref{regular_th} is the first main result of this paper.

\begin{theorem}{\rm (Generic complete eigenstructures of Hermitian matrix pencils).} \label{regular_th}
Let $n \geq 2$, $0\leq d\leq\lfloor\frac{n}{2}\rfloor$, and $0\leq c\leq  n-2d$. Let us define the following complex Hermitian $n\times n$ regular matrix pencils:
\begin{equation}\label{max-real}
\begin{array}{ccl}
{\cal
R}_{c,d} (\lambda)&:=&\diag(\cJ_1^H(\mu_1,\overline{\mu_1}),\hdots,\cJ_1^H(\mu_d,\overline{\mu_d}),\\&& \cJ_1(a_1),\dots , \cJ_1(a_c),-\cJ_1(a_{c+1}),\hdots,-\cJ_1(a_{n-2d}) ),\,
\end{array}
\end{equation}
where $a_1,\hdots ,a_{n-2d} \in\RR$, $\mu_1,\hdots,\mu_d \in \CC \setminus \RR$ have positive imaginary part, $a_i \neq a_j$, and $\mu_i\neq\mu_j$, for $i\neq j$. Then:
\begin{enumerate}
\item[\rm (i)] For every complex Hermitian $n\times n$ matrix pencil ${\cal H}(\lambda)$ there exist integers $c$ and $d$ such that
$\overline{\bun^H} (\cH)\subseteq  \, \overline{\bun^H}({\cal R}_{c,d})$. 

\item[\rm (ii)] $\bun^H({\cal R}_{c',d'})
\cap \overline{\bun^H}({\cal R}_{c,d})=\emptyset$ whenever $d \ne
d'$ or $c\neq c'$.
\item[\rm (iii)] The set $\pen_{n\times n}^H$ is equal to $\displaystyle \bigcup_{\begin{array}{c}\mbox{\tiny $0\leq d \leq \lfloor\frac{n}{2}\rfloor$}\\\mbox{\tiny$0\leq c\leq n-2d$}\end{array}} \overline{\bun^H}({\cal R}_{c,d} )$.
\end{enumerate}
\end{theorem}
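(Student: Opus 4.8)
The plan is to prove the three statements in order, deducing (i) from the technical results of Section~\ref{technical_sec}, (ii) from standard facts on the continuous dependence of eigenvalues and sign characteristics of regular Hermitian pencils, and (iii) almost immediately from (i). For part~(i) I would first reduce to the regular case: if $\cH$ is singular, Lemma~\ref{lemm.fromlowrank} applied with $r=n$ produces a regular Hermitian pencil $\cH'$ with $\overline{\bun^H}(\cH)\subseteq\overline{\bun^H}(\cH')$, so we may assume $\cH$ regular. Write $\Hb(\cH)=\cA_1\oplus\cdots\oplus\cA_s$ as in Theorem~\ref{HCKF_th}; since $\cH$ is regular, each $\cA_i$ is of type (i), (ii) or (iii). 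Now apply Proposition~\ref{propo} blockwise: parts (a1) and (a2) dominate, in the bundle-closure order, each $\sigma\cJ_k^H(a)$ with $a\in\RR\cup\{\infty\}$ by a direct sum $\widetilde\cA_i$ of the same size built from $1\times1$ real blocks $\pm\cJ_1(\cdot)$ and $2\times2$ conjugate-pair blocks $\cJ_1^H(\cdot,\overline{\cdot})$, the signs of the real blocks being prescribed by the signatures of the leading coefficients exactly as stated there, while part (b) dominates each $\cJ_k^H(\mu,\overline\mu)$ by $k$ conjugate-pair blocks. Choosing all the free parameters pairwise distinct throughout, so that $\widetilde\cA_i$ and $\widetilde\cA_j$ share no finite eigenvalue for $i\neq j$, Lemma~\ref{lemm-direcsum2} yields
$$\overline{\bun^H}(\cH)=\overline{\bun^H}(\cA_1\oplus\cdots\oplus\cA_s)\subseteq\overline{\bun^H}(\widetilde\cA_1\oplus\cdots\oplus\widetilde\cA_s).$$
Block sizes are preserved at every step, so $\widetilde\cA_1\oplus\cdots\oplus\widetilde\cA_s$ is an $n\times n$ regular pencil with $d$ distinct conjugate-pair blocks and $n-2d$ distinct simple real blocks, $c$ of them of sign $+1$; hence it lies in $\bun^H(\cR_{c,d})$, and necessarily $0\le d\le\lfloor n/2\rfloor$, $0\le c\le n-2d$. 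Lemma~\ref{lemm.trivialbundle} then gives $\overline{\bun^H}(\cH)\subseteq\overline{\bun^H}(\cR_{c,d})$, which is (i).

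For part~(ii), suppose for contradiction that $\cH\in\bun^H(\cR_{c',d'})\cap\overline{\bun^H}(\cR_{c,d})$ with $(c',d')\neq(c,d)$, and fix a sequence $\cH^{(m)}\in\bun^H(\cR_{c,d})$ with $\cH^{(m)}\to\cH$. Because $\cR_{c,d}$ and $\cR_{c',d'}$ have no infinite eigenvalue, $\cH$ and all $\cH^{(m)}$ are regular with invertible leading coefficient, so $\det\cH^{(m)}(\la)$ and $\det\cH(\la)$ are real polynomials of exact degree $n$ whose coefficients converge. Since $\cH$ has $n$ distinct eigenvalues, for $m$ large the $n$ simple eigenvalues of $\cH^{(m)}$ lie one at a time in disjoint small neighbourhoods of those of $\cH$; one near a non-real eigenvalue of $\cH$ is non-real, and one near a real eigenvalue of $\cH$ must be real (else its conjugate would be a second eigenvalue of $\cH^{(m)}$ clustering at the same real point). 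Moreover the sign characteristic of a simple real eigenvalue $\la_0$ equals $\operatorname{sgn}\!\big(x^{*}\cH'(\la_0)x\big)$ for a nonzero $x\in\ker\cH(\la_0)$, with $\cH'$ the $\la$-derivative; this scalar is nonzero and depends continuously on the pencil, hence is locally constant. Therefore $\cH^{(m)}$ has the same complete eigenstructure type as $\cH$ for $m$ large, i.e. $\cH^{(m)}\in\bun^H(\cH)$, and Lemma~\ref{lemm.trivialbundle}(a) forces $\bun^H(\cR_{c,d})=\bun^H(\cH^{(m)})=\bun^H(\cH)=\bun^H(\cR_{c',d'})$, contradicting $(c',d')\neq(c,d)$; thus the intersection is empty.

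Part~(iii) is then short: the inclusion ``$\supseteq$'' is precisely part~(i) applied to every $\cH\in\pen_{n\times n}^H$, while ``$\subseteq$'' holds because each Hermitian bundle lies in $\pen_{n\times n}^H$, which is closed in $\CC^{2n^2}$ (being cut out by the real-linear conditions $A=A^{*}$, $B=B^{*}$), so its closure is contained in it as well. I expect the main obstacle to be part~(i): although the genuinely delicate constructions---approximating an arbitrary, possibly singular and highly non-generic Hermitian canonical block by generic simple blocks while matching the sign characteristic via the signature of the leading coefficient---have already been isolated in Lemma~\ref{lemm.fromlowrank} and Proposition~\ref{propo}, one still has to keep careful track of block sizes and of the signs, and to apply the direct-sum closure lemmas with honestly distinct finite eigenvalues; part~(ii), by contrast, rests only on standard eigenvalue and sign-characteristic perturbation facts for regular Hermitian pencils.
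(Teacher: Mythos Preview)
Your proof is correct. Parts (i) and (iii) follow the paper's approach essentially verbatim (reduce to the regular case via Lemma~\ref{lemm.fromlowrank}, apply Proposition~\ref{propo} blockwise, and glue using Lemma~\ref{lemm-direcsum2}; then read off (iii) from (i)). One small slip in your write-up of (iii): the labels ``$\subseteq$'' and ``$\supseteq$'' are interchanged---part~(i) yields $\pen_{n\times n}^H\subseteq\bigcup\overline{\bun^H}(\cR_{c,d})$, while closedness of $\pen_{n\times n}^H$ gives the reverse inclusion.

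For part (ii), however, you take a genuinely different route from the paper. The paper splits into cases: for $d\neq d'$ it uses continuity of eigenvalues (a real sequence cannot converge to a non-real limit; a non-real conjugate pair converging to a real number would create a double eigenvalue), and for $d=d'$, $c\neq c'$ it compares the global signature of the leading coefficient matrix, invoking \cite[Th.~4.3]{LaRo05} on limits of Hermitian matrices with fixed signature. You instead argue uniformly that the bundle type is locally constant: since all pencils involved have invertible leading coefficient and $n$ simple eigenvalues, the eigenvalues of $\cH^{(m)}$ cluster one-to-one near those of $\cH$, the real/non-real type is preserved by the conjugate-pairing argument, and the sign characteristic of each simple real eigenvalue is preserved because it equals $\operatorname{sgn}(x^*Bx)$ for an eigenvector $x$, a nonzero real scalar depending continuously on the data. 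This avoids both the case split and the external reference, at the cost of invoking the local eigenvector formula for the sign characteristic---a standard fact (see the references on sign characteristic cited in the paper), but one the paper does not otherwise use.
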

\begin{proof}
Claim (iii) is an immediate consequence of claim (i).

Let us prove (i). As a consequence of Lemma \ref{lemm.fromlowrank}, with $r=n$, we only need to prove it when $\cH$ is a regular pencil, i.e., when $\cH$ has rank exactly $r$. The HKCF of any Hermitian $n\times n$ regular pencil $\cH$ is a direct sum of blocks of the form $\cJ_k^H(\mu,\overline{\mu})$ and $\sigma\cJ_\ell^H(a)$, with $\mu$ having positive imaginary part and $a\in\RR\cup\{\infty\}$. By combining claims (a1)--(a2)--(b) in Proposition \ref{propo} with Lemma \ref{lemm-direcsum2}, the closure of the Hermitian bundle corresponding to $\cH$ is included in the closure of the Hermitian bundle of a direct sum of blocks $\cJ_1^H(\mu_i,\overline{\mu_i})$ and $\pm\cJ_1(a_i)$, where $\mu_i$ has positive imaginary part and $a_i \in \RR$, as long as we take all the eigenvalues in this last direct sum to be distinct. This is always allowed by Proposition \ref{propo} since the parameters appearing in the bundles in that result are arbitrary (subjected to their defining conditions).

Now, let us prove (ii). We are going to prove it first when $d\neq d'$. By contradiction, assume that $\cH(\la)$ is any pencil in $\bun^H({\cal R}_{c',d'})$ but $\cH
\in\overline{\bun^H}({\cal R}_{c,d})$, with ${\cal R}_{c',d'}(\la)$ as in the statement. Then, there is a sequence of pencils in $\bun^H({\cal R}_{c,d})$, say $\{\cH_m(\la)\}$, converging to $\cH(\la)$. Since $\cH_m\in\bun^H({\cal R}_{c,d})$, the eigenvalues of $\cH_m$ are of the form $\mu_{1,m},\overline{\mu_{1,m}},\hdots,\mu_{d,m},\overline{\mu_{d,m}}$, and $a_{1,m},\hdots,a_{n-2d,m}$, with $\mu_{1,m},\hdots,\mu_{d,m}$ having positive imaginary part, $a_{1,m},\hdots,a_{n-2d,m}\in\RR$, and $\mu_{i,m}\neq\mu_{j,m}$, $a_{i,m}\neq a_{j,m}$, for $i\neq j$. Analogously, the eigenvalues of $\cH$ are $\mu_1',\overline{\mu_1'},\hdots,\mu_{d'}',\overline{\mu_{d'}'}$, and $a'_1,\hdots,a'_{n-2d'}\in\RR$, with $\mu_1',\hdots,\mu_{d'}'$ having positive imaginary part, and all these numbers being different to each other.

Let us assume first that $d'>d$. By the continuity of the eigenvalues of regular pencils (see, for instance, Theorem 2.1 in \cite[Ch. 6]{stewart-sun}), at least one of the sequences $\{a_{i,m}\}$ converges to some $\mu_j'$. But this is not possible, since $a_{i,m}\in\RR$ and $\mu_j'\in\CC\setminus\RR$.

Now, assume that $d>d'$. By the continuity of the eigenvalues again, there is at least one sequence $\{\mu_{i,m}\}$ converging to some $a_j'$. Since $a_j'\in\RR$, this implies that ${\rm im}(\mu_{i,m})$ converges to $0$, and this in turn implies that $\overline{\mu_{i,m}}$ converges to $a_j'$ as well. Therefore, the limit of $\{\cH_m\}$ (namely, a pencil with HKCF equal to $\cR_{c',d'}$) has $a_j'$ as an eigenvalue with algebraic multiplicity at least $2$. This is in contradiction with the fact that the real eigenvalues of $\cR_{c',d'}$ are different to each other.

It remains to prove the statement when $d=d'$ but $c\neq c'$. Note that the signature of the leading coefficient of $\cJ_1(\mu,\overline\mu)$ (namely, $\left[\begin{smallmatrix}0&1\\1&0\end{smallmatrix}\right]$) is $(1,1,0)$. Therefore, the signature of the leading coefficient of $\cR_{c',d}$ is $(c'+d,n-c'-d,0)$, which is equal to the signature of the leading coefficient of $\cH (\la)$ since the signature is invariant under $*$-congruence. On the other hand,  using again that the signature is invariant under $*$-congruence, the signature of the leading coefficient of $\cH_m$ is the same as the signature of the leading coefficient of $\cR_{c,d}$, namely $(c+d,n-c-d,0)$. Then, $c'\neq c$ implies that either $c+d<c'+d$ or $n-c-d<n-c'-d$, which contradicts Theorem 4.3 in \cite{LaRo05}.
\end{proof}

The following result is an immediate consequence of Theorem \ref{regular_th}.

\begin{corollary}\label{gencount_coro}
The number of different generic eigenstructures in $\pen_{n\times n}^H$ is equal to $\displaystyle\left(\left\lfloor\frac{n}{2}\right\rfloor+1\right)\left\lfloor\frac{n+3}{2}\right\rfloor$.
\end{corollary}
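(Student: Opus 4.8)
The plan is to simply read off and count the generic eigenstructures supplied by Theorem \ref{regular_th}. By parts (i) and (iii) of that theorem, every complex Hermitian $n\times n$ pencil lies in the closure of some $\bun^H(\cR_{c,d})$ with $0\le d\le\lfloor n/2\rfloor$ and $0\le c\le n-2d$, so each of these pencils carries a generic eigenstructure and these are all of them; by part (ii), the bundles $\bun^H(\cR_{c,d})$ are pairwise distinct, hence these eigenstructures are pairwise distinct as well. Therefore the number of generic eigenstructures in $\pen_{n\times n}^H$ equals the number of admissible pairs $(c,d)$.

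Next I would carry out the count. For a fixed $d$ with $0\le d\le\lfloor n/2\rfloor$, the parameter $c$ takes the $n-2d+1$ values $0,1,\dots,n-2d$. Writing $D:=\lfloor n/2\rfloor$, the total number of pairs is
$$
\sum_{d=0}^{D}(n+1-2d)=(D+1)(n+1)-2\cdot\frac{D(D+1)}{2}=(D+1)(n+1-D).
$$

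Finally I would identify $(D+1)(n+1-D)$ with the claimed closed form by separating the parity of $n$: if $n=2k$ then $D=k$, so $(D+1)(n+1-D)=(k+1)(k+1)$, which agrees with $\bigl(\lfloor n/2\rfloor+1\bigr)\lfloor(n+3)/2\rfloor=(k+1)(k+1)$; if $n=2k+1$ then $D=k$, so $(D+1)(n+1-D)=(k+1)(k+2)$, which agrees with $\bigl(\lfloor n/2\rfloor+1\bigr)\lfloor(n+3)/2\rfloor=(k+1)(k+2)$. There is essentially no obstacle here: the only mildly delicate step is this last matching of the product form with the floor expression, and it is immediate once the two parity cases are written out; everything else follows directly from Theorem \ref{regular_th} together with an elementary arithmetic series.
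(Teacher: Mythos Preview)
Your proposal is correct and follows essentially the same approach as the paper: both count the admissible pairs $(c,d)$ from Theorem~\ref{regular_th}, evaluate the arithmetic sum $\sum_{d=0}^{\lfloor n/2\rfloor}(n-2d+1)=(\lfloor n/2\rfloor+1)(n+1-\lfloor n/2\rfloor)$, and then identify $n+1-\lfloor n/2\rfloor$ with $\lfloor(n+3)/2\rfloor$. The only cosmetic difference is that you verify this last identity by splitting into parity cases, whereas the paper asserts it directly.
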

\begin{proof}
The number of different pairs $(c,d)$ in Theorem \ref{regular_th} is
$$
\begin{array}{ccl}
\displaystyle\sum_{d=0}^{\lfloor n/2\rfloor}(n-2d+1)&=&(n+1)\left(\lfloor\frac{n}{2}\rfloor+1\right)-\lfloor\frac{n}{2}\rfloor\left(\lfloor\frac{n}{2}\rfloor+1\right)\\&=&\left(\lfloor\frac{n}{2}\rfloor+1\right)\left(n+1-\lfloor\frac{n}{2}\rfloor\right)=\left(\lfloor\frac{n}{2}\rfloor+1\right)\lfloor\frac{n+3}{2}\rfloor.\end{array}
$$
\end{proof}

Note that the number of generic eigenstructures in $\pen_{n\times n}^H$, according to Corollary \ref{gencount_coro}, is larger than $1$, for all $n\in\NN$.  This is a consequence of the different forms to distribute in regular Hermitian pencils the eigenvalues between real and non-real ones and also of the different possible sign characteristics of the real eigenvalues. The fact that there are more than one (in fact, many for large $n$) generic eigenstructures of regular Hermitian pencils is in stark contrast with the number of generic eigenstructures of regular unstructured pencils and of regular complex symmetric pencils, which have only one generic eigenstructure corresponding to all the eigenvalues being different and simple.

\section{The case of Hermitian pencils with bounded rank}\label{main_sec}

In Theorem \ref{main_th},  which is the second main result of this paper, we prove that $\pen_{n\times n}^H(r)$ is the union of a finite number of closures of Hermitian bundles, and we explicitly provide the HKCF of each of these bundles. As a consequence, these HKCFs are the generic canonical forms of complex Hermitian $n\times n$ matrix pencils with rank at most $r$. In other words, they provide the generic complete eigenstructures of these pencils.  Surprisingly, none of these eigenstructures contain non-real eigenvalues, unlike what happens with the generic regular complete eigenstructures for general Hermitian pencils provided in Theorem \ref{regular_th}.

\begin{theorem}{\rm (Generic complete eigenstructures of Hermitian matrix pencils with bounded rank).} \label{main_th}
Let $n$ and $r$ be integers such that $n \geq 2$ and $1\leq r \leq n-1$. Set $d=0,1,\ldots,\lfloor\frac{r}{2}\rfloor\, $ and let $d=(n-r)\alpha+s$ be the Euclidean division of $d$ by $n-r$. Let us define the following complex Hermitian $n\times n$ matrix pencils with rank $r$:
\begin{equation}\label{max}
\begin{array}{ccl}
{\cal
K}_{c,d} (\lambda)&:=&\diag(\overbrace{{\cal M}_{\alpha+1},\hdots,{\cal M}_{\alpha+1}}^{s},
\overbrace{{\cal M}_{\alpha},\hdots,{\cal M}_{\alpha}}^{n-r-s},\\&& \cJ_1(a_1),\dots , \cJ_1(a_c),-\cJ_1(a_{c+1}),\hdots,-\cJ_1(a_{r-2d}) ),\,
\end{array}
\end{equation}
where $a_1,\hdots ,a_{r-2d} \in\RR, a_i \neq a_j$ for $i\neq j$, and $c=0,1,\hdots,r-2d$. 
Then:
\begin{enumerate}
\item[\rm (i)] For every complex Hermitian $n\times n$ matrix pencil ${\cal H}(\lambda)$ with rank
at most $r$, there exist nonnegative integers $c$ and $d$, with $0\leq d\leq\lfloor\frac{r}{2}\rfloor$ and $0\leq c\leq r-2d$, such that
$\overline{\bun^H}({\cal H})\subseteq\overline{\bun^H}({\cal K}_{c,d})$.


\item[\rm (ii)] $\bun^H ({\cal K}_{c',d'})\cap \overline{\bun^H}({\cal K}_{c,d}) = \emptyset$ whenever $d \ne
d'$ or $c\neq c'$.
\item[\rm (iii)] The set $\pen_{n\times n}^H(r)$ is a closed subset of $\pen^{H}_{n \times n}$, and it is equal to $\displaystyle \bigcup_{\begin{array}{c}\mbox{\tiny $0\leq d \leq \lfloor\frac{r}{2}\rfloor$}\\\mbox{ \tiny$0\leq c\leq r-2d$}\end{array}} \overline{\bun^H}({\cal K}_{c,d} )$.
\end{enumerate}
\end{theorem}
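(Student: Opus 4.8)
The plan is to mirror closely the proof of Theorem \ref{regular_th}, but now the regular part of the eigenstructure must share ``room'' with the singular blocks $\cM_k$, and the key extra ingredients are parts (c) and (d) of Proposition \ref{propo}, which let us trade a complex-conjugate pair for a larger $\cM$-block and redistribute the sizes of the $\cM$-blocks, respectively. Claim (iii) will again be immediate from (i) together with the observation that $\pen_{n\times n}^H(r)$ is closed: indeed each $\overline{\bun^H}(\cK_{c,d})$ is contained in $\pen^H_{n\times n}(r)$ because rank is lower-semicontinuous and every $\cK_{c,d}$ has rank exactly $r$, so the finite union of these closures is a closed subset of $\pen^H_{n\times n}(r)$; conversely (i) shows every pencil of rank at most $r$ lies in that union, giving equality and hence closedness. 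So the work is concentrated in (i) and (ii).

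For (i), I would first invoke Lemma \ref{lemm.fromlowrank} to reduce to the case $\rank \cH = r$ exactly; then $\Hb(\cH)$ is a direct sum of blocks $\cM_{k}$, $\cJ_k^H(\mu,\overline\mu)$, and $\sigma\cJ_\ell^H(a)$ with $a\in\RR\cup\{\infty\}$, and the number of $\cM$-blocks is exactly $n-r$. Using Proposition \ref{propo}(a1)--(a2)--(b) together with Lemma \ref{lemm-direcsum2}, I replace every $\sigma\cJ_\ell^H(a)$ block by $\cJ_1^H$-pairs and $\pm\cJ_1(a_i)$ blocks, and every $\cJ_k^H(\mu,\overline\mu)$ by $\cJ_1^H(\mu_i,\overline{\mu_i})$ pairs, all with distinct parameters; at this stage the eigenstructure is $\left(\bigoplus \cM_{k_i}\right)\oplus\left(\bigoplus \cJ_1^H(\mu_i,\overline{\mu_i})\right)\oplus\left(\bigoplus \pm\cJ_1(a_i)\right)$. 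Next, I use Proposition \ref{propo}(c) repeatedly: each remaining pair $\cJ_1^H(\mu_i,\overline{\mu_i})$ can be absorbed into one of the $\cM$-blocks, increasing its size by $1$ while keeping the rank fixed at $r$ and eliminating all non-real eigenvalues; after absorbing all $p$ pairs (if the original regular part had $2d_0+p$ entries, one ends with $r-2d$ real blocks where $d$ counts the pairs that got absorbed into $\cM$'s — I need to bookkeep that the number of surviving $\pm\cJ_1$ blocks is $r-2d$ for the appropriate $d$ with $0\le d\le\lfloor r/2\rfloor$, which works because absorbing a pair costs two from the ``budget'' of $r$ but adds nothing new). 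Finally Proposition \ref{propo}(d) redistributes the now-$n-r$ blocks $\cM_{k_i}$ into the balanced shape $\cM_{\alpha+1}^{(s)}\oplus\cM_\alpha^{(n-r-s)}$ with $\sum k_i = (n-r)\alpha+s$; and since the signs $\pm$ can be reordered within a bundle (Lemma \ref{lemm.trivialbundle}), the final eigenstructure is exactly $\cK_{c,d}$ for the appropriate $c$. Throughout, Lemma \ref{lemm-direcsum2} is what licenses stringing these per-block inclusions into an inclusion of the direct sums, provided the finite eigenvalues of the target blocks are chosen distinct — which is always possible since the parameters in Proposition \ref{propo} are free.

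For (ii), the argument splits as before. When $d\ne d'$ I would argue via the sum of the minimal indices (equivalently the rank of the singular part): a pencil in $\bun^H(\cK_{c',d'})$ has $n-r$ minimal index pairs summing to $\lfloor d'/(n-r)\rfloor$-type data, but more to the point its regular part has exactly $r-2d'$ real eigenvalues (counted with the $\cM$-blocks fixed), and a limit of pencils from $\bun^H(\cK_{c,d})$ must — by lower semicontinuity of the number of $\cM$-blocks and continuity of eigenvalues of the regular part, together with the fact that all real eigenvalues of $\cK_{c,d}$ are simple and distinct — retain $r-2d$ simple real finite eigenvalues, so $d=d'$; the non-real/real obstruction used in Theorem \ref{regular_th} does not even arise here since nothing in sight has non-real eigenvalues, and one instead uses that a collision of two simple real eigenvalues or the vanishing of an $\cM$-block cannot be undone in the limit. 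When $d=d'$ but $c\ne c'$, I would use exactly the signature argument from Theorem \ref{regular_th}: the signature of the leading coefficient is a $*$-congruence invariant, it equals $(c'+?, \,\cdot\,,\,\cdot\,)$ for $\cK_{c',d}$ and $(c+?,\,\cdot\,,\,\cdot\,)$ along the approximating sequence (the contribution of the $\cM_k$ blocks to the signature being the same for both since those blocks are fixed), and Theorem 4.3 in \cite{LaRo05} forbids the signature of the limit's leading coefficient from jumping in the wrong direction.

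The main obstacle I expect is the bookkeeping in step (i): one must verify that after absorbing complex-conjugate pairs into $\cM$-blocks via Proposition \ref{propo}(c) and rebalancing via (d), the index $d$ that emerges genuinely ranges over $\{0,1,\ldots,\lfloor r/2\rfloor\}$ and that $c$ ranges over $\{0,\ldots,r-2d\}$, with no eigenstructure of rank $\le r$ escaping the list — in particular handling the pencils that are already singular before absorption, and confirming that the freedom to choose distinct parameters in Proposition \ref{propo} is exactly what Lemma \ref{lemm-direcsum2} needs. The second, more subtle point in (ii) for $d\ne d'$ is making rigorous that ``an $\cM$-block cannot disappear in a limit and a double real eigenvalue cannot split back apart'' — this is where one must be careful to combine lower semicontinuity of minimal indices with the simplicity of the real eigenvalues in the $\cK_{c,d}$, rather than relying, as in Theorem \ref{regular_th}, on the real-versus-non-real dichotomy.
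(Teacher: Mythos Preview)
Your treatment of (i) and (iii), and of the case $d=d'$, $c\ne c'$ in (ii), is essentially the paper's argument and is correct.

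The genuine gap is in (ii) for $d\ne d'$. Your appeal to ``lower semicontinuity of the number of $\cM$-blocks'' is vacuous here, since both $\cK_{c,d}$ and $\cK_{c',d'}$ have exactly $n-r$ such blocks; what differs is the \emph{sizes} of those blocks (summing to $d$ versus $d'$) and the number of eigenvalues ($r-2d$ versus $r-2d'$). More seriously, ``continuity of eigenvalues of the regular part'' is not a well-defined statement for singular pencils: the regular part of $\Hb(\cH)$ is not a continuous function of $\cH$, and eigenvalues of a singular pencil can appear or disappear under arbitrarily small perturbations. You cannot simply track the $r-2d$ real eigenvalues of the approximating sequence into the limit.

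The paper handles the two directions $d'>d$ and $d'<d$ with completely different, and non-obvious, tools. For $d'>d$ it invokes the known majorization of the Weyr sequence of right minimal indices under orbit closure (de Hoyos, \cite{Hoyo90}): the limit $\cK_{c',d'}$ cannot have minimal indices whose Weyr characteristic dominates that of the approximants, and the balanced shapes $\alpha,\alpha+1$ force a contradiction when $d'>d$. For $d'<d$ the argument is substantially harder: one writes each approximant via the block anti-triangular form of Theorem~\ref{antitriangular_th} with a \emph{unitary} conjugator, uses compactness of the unitary group to pass to a subsequence whose conjugators converge, and thereby obtains a block structure \eqref{h} for the limit with a $d\times(n-r+d)$ block $\cC(\lambda)$. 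Since the limit has $r-2d'>r-2d$ distinct real eigenvalues, one of them, say $\lambda_0$, must force $\rank\cC(\lambda_0)<d$; then a direct minor computation shows $(\lambda-\lambda_0)^2$ divides every nonzero $r\times r$ minor of the limit, contradicting simplicity. Neither of these ideas appears in your sketch, and the phrase ``an $\cM$-block cannot disappear in a limit'' does not substitute for them: in fact an $\cM_\alpha$-block \emph{can} become $\cM_{\alpha-1}\oplus(\text{regular stuff})$ in a limit, which is exactly the phenomenon that must be ruled out for $d'<d$.
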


\begin{proof} The proof has the same structure as the one for Theorem 3 in \cite{ddd20}, adapted to Hermitian pencils instead of symmetric ones. However, we emphasize that some interesting differences also appear related to the role played by the sign characteristic and by the blocks $\cJ_{k}^H (\mu,\overline{\mu})$ corresponding to pairs of non-real complex conjugate eigenvalues.

The set $\pen_{n\times n}^H(r)$ is a closed subset of $\pen^{H}_{n \times n}$ because it is the intersection of $\pen^{H}_{n \times n}$ with the set of complex $n\times n$ matrix pencils with rank at most $r$, which is a closed set. Therefore, claim (iii) is an immediate consequence of (i), so we only need to prove (i) and (ii).

Let us start proving (i). Because of Lemma \ref{lemm.fromlowrank}, we can restrict ourselves to Hermitian $n\times n$ pencils with rank exactly $r$.  So let $\cH(\la)$ be a Hermitian pencil with $\rank\cH=r$. By Theorem \ref{HCKF_th}, we may assume that
$$
{\rm HKCF}(\cH)=\diag\left(\bigoplus_t\sigma_t \cJ_{k_t}^H(b_t),\bigoplus_j\cJ_{k_j}^H(\la_j,\overline{\la_j}),\bigoplus_\ell\cM_{d_\ell}\right),
$$
with $b_t\in\RR\cup\{\infty\}$ and  $\la_j\in\CC$ with positive imaginary part (the number of blocks $\cJ_{k_t}^H(b_t)$, $\cJ_{k_j}^H(\la_j,\overline{\la_j})$ is not relevant, and the number of blocks $\cM_{d_\ell}(\la)$ is $n-r$, by the rank-nullity Theorem). Then, by (a1), (a2), and (b) in Proposition \ref{propo}, together with Lemma \ref{lemm-direcsum2} and Lemma \ref{lemm.trivialbundle}, $\overline{\bun^H}(\cH)$ is included in the closure of the Hermitian bundle of
$$
\widehat\cH(\la):=\diag\left(\bigoplus_{i=1}^c\cJ_1(a_i) ,\bigoplus_{i=c+1}^{r-2d}(-\cJ_1(a_i)),\bigoplus_p\cJ_1^H(\mu_p,\overline{\mu_p}),\bigoplus_\ell\cM_{d_\ell}(\la)\right),
$$
for some $a_i\in\RR$ and $\mu_p$ having positive imaginary part, and for some $1\leq c\leq r-2d$, with $0\leq d\leq\lfloor\frac{r}{2}\rfloor$. Moreover, since there are infinitely many possible choices for the distinct eigenvalues in the bundles of the right-hand side of each of the inclusions in parts (a1), (a2), and (b) in Proposition \ref{propo}, the values $a_i$ can be taken to be different to each other, for $i=1,\hdots,r-2d$, and the same happens for the values $\mu_p$. Since $r\leq n-1$, there is at least one block $\cM_{d_1}(\la)$ in the previous direct sum defining $\widehat\cH(\la)$. Then, by repeatedly applying  Proposition \ref{propo}--(c) and Lemma \ref{lemm-direcsum2} (as many times as the number of $\cJ^H_{1}(\mu_p,\overline{\mu_p})$ blocks in $\widehat\cH$), the closure of the Hermitian bundle of $\widehat\cH$ is included in the closure of the Hermitian  bundle of
\begin{equation} \label{eq-auxxfro}
\widetilde \cH(\la):=\diag\left(\bigoplus_{i=1}^c\cJ_1(a_i) ,\bigoplus_{i=c+1}^{r-2d}(-\cJ_1(a_i)),\cM_{d_1+q}(\la),\bigoplus_{\ell>1}\cM_{d_\ell}(\la)\right),
\end{equation}
for some $q\geq0$ (which is equal to the number of all blocks in  $\bigoplus_p\cJ_1^H(\mu_p,\overline{\mu_p})$).  To get this inclusion we first split  (modulo permutation of direct summands) $\widehat\cH$ into $\cH_1$ and $\cH_2$, where $\cH_2$ is the direct sum of $\cM_{d_1}$ with $\cJ^H_{1}(\mu_p,\overline{\mu_p})$ which, for simplicity, is the last block of this kind in $\widehat\cH$, and $\cH_1$ is the direct sum of the remaining blocks. Then, by part (c) in Proposition \ref{propo} and Lemma \ref{lemm-direcsum2}, $\overline{\bun^H}(\widehat \cH)=\overline{\bun^H}(\cH_1 \oplus \cH_2)\subseteq\overline{\bun^H}(\widetilde\cH_1 \oplus \widetilde\cH_2)$, with $\widetilde\cH_1=\cH_1$ and $\widetilde\cH_2=\cM_{d_1+1}$, since $\widetilde\cH_1$ and $\widetilde\cH_2$ do not have common eigenvalues. Now, we proceed in the same way with the new pencil $\widetilde\cH_1 \oplus \widetilde\cH_2$, and so on until we get $\widetilde \cH(\la)$ in \eqref{eq-auxxfro}. Applying claim (d) in Proposition \ref{propo},  together with Lemma \ref{lemm-direcsum2} again,  to the $\cM_d$ blocks of $\widetilde \cH(\la)$, we conclude that the closure of the Hermitian bundle of $\widetilde\cH$ is in turn included in the closure of the Hermitian bundle of
$$
\diag\left(\bigoplus_{i=1}^c\cJ_1(a_i) ,\bigoplus_{i=c+1}^{r-2d}(-\cJ_1(a_i)),\bigoplus\cM_{\alpha+1}(\la),\bigoplus\cM_{\alpha}\right),
$$
for some fixed $\alpha$, and where the total number of blocks $\cM_\alpha$ and $\cM_{\alpha+1}$ in the previous direct sum is $n-r$. Taking this into account, if $s$ is the number of blocks $\cM_{\alpha+1}$, then the number of blocks $\cM_\alpha$ is $n-r-s$. The value of $\alpha$ can be obtained by adding up the number of rows (or columns) in the previous pencil and equating to $n$, namely:
$$
n=r-2d+s(2\alpha+3)+(n-r-s)(2\alpha+1)=n+2(\alpha(n-r)+s-d),
$$
which implies $d=\alpha(n-r)+s$, as claimed.

Summarizing, we have proved that $\overline{\bun^H}(\cH)\subseteq\overline{\bun^H}(\cK_{c,d})$, for some $c,d$ as in the statement. This proves (i).

Now, let us prove (ii). First, we need to see that if $d'>d$, then for any $\cH \in \bun^H({\cal K}_{c',d'})$,  $\cH \notin \overline{\bun^H}({\cal K}_{c,d})$, for any $c$ and $c'$. By Lemma \ref{lemm.trivialbundle}-(d) and the definition of bundle, we can take $\cH = {\cal K}_{c',d'}$ for certain distinct real numbers $a_1, \ldots , a_{r - 2 d'}$. Then, the same argument as the one in the proof of Theorem 3 in \cite{ddd20} is still valid, i.e., ${\cal K}_{c',d'} \in \overline{\bun^H}({\cal K}_{c,d})$ would be against the  majorization of the Weyr sequence of right minimal indices, see, for instance, \cite[Lemma 1.2]{Hoyo90}.

It remains to prove that if $d'<d$ or $d'=d$ but $c\neq c'$, then $\bun^H({\cal K}_{c',d'}) \cap \overline{\bun^H}({\cal K}_{c,d}) = \emptyset$ too.

The first part of the rest of the proof is almost the same as the proof of Theorem 3 in \cite{ddd20}, replacing $\top$ in that proof by $^*$. However, there are several relevant differences, so we include here the proof for completeness and for the sake of reader.

By contradiction, if $\bun^H({\cal K}_{c',d'}) \cap \overline{\bun^H}({\cal K}_{c,d}) \neq \emptyset$, then at least one pencil $S(\la)$ congruent to ${\cal K}_{c',d'}(\la)$ as in \eqref{max}, with $a_i\neq a_j$, for $i\neq j$, and $a_i\in\RR$, must be the limit of a sequence of pencils in $\bun^H({\cal K}_{c,d})$. Let $\{{\cal S}_m(\la)\}_{m\in\NN}$ be a sequence of pencils with ${\cal S}_m(\la)\in\bun^H({\cal K}_{c,d})$, for all $m\in\NN$. Then, by Theorem \ref{antitriangular_th}

\begin{equation}\label{Sm}
{\cal S}_m(\la)=Q_m^*\left[\begin{array}{ccc}{\cal A}_m(\la)&{\cal B}_m(\la)&{\cal S}_{\tiny\mbox{\rm right}}^{(m)}(\la)\\{\cal B}_m(\la)^*&S_{\tiny\mbox{\rm reg}}^{(m)}(\la)&0\\{\cal S}_{\tiny\mbox{\rm right}}^{(m)}(\la)^*&0&0\end{array}\right]Q_m,
\end{equation}
with $Q_m\in\CC^{n\times n}$ being a unitary matrix, for all $m\in\NN$, and
\begin{itemize}
\item ${\cal S}_{\tiny\mbox{\rm right}}^{(m)}(\la)$ has size $d\times (n-r+d)$, and complete eigenstructure consisting of the right minimal indices of ${\cal K}_{c,d}(\la)$,
\item ${\cal S}_{\tiny\mbox{\rm right}}^{(m)}(\la)^*$ has size $(n-r+d)\times d$, and complete eigenstructure consisting of the left minimal indices of ${\cal K}_{c,d}(\la)$,
\item ${\cal S}^{(m)}_{\tiny\mbox{\rm reg}}(\la)$ is a regular (Hermitian) pencil of size $(r-2d)\times(r-2d)$, with $r-2d$ distinct real eigenvalues and $c$ of them having positive sign characteristic.
\end{itemize}

Now, let us assume that ${\cal S}_m(\la)$ converges to some pencil ${\cal S}(\la) \in \bun^H({\cal K}_{c',d'})$. Since the set of unitary $n\times n$ matrices is a compact subset of the metric space $(\CC^{n\times n},\|\cdot\|_2)$, the sequence $\{Q_m\}_{m\in\NN}$ has a convergent subsequence, say $\{Q_{m_j}\}_{j\in\NN}$, whose limit is a unitary matrix (see, for instance, \cite[Lemma 2.1.8]{HoJo13}). Set
$$
{\cal H}_m(\la):=\left[\begin{array}{ccc}{\cal A}_m(\la)&{\cal B}_m(\la)&{\cal S}_{\tiny\mbox{\rm right}}^{(m)}(\la)\\{\cal B}_m(\la)^*&{\cal S}_{\tiny\mbox{\rm reg}}^{(m)}(\la)&0\\{\cal S}_{\tiny\mbox{\rm right}}^{(m)}(\la)^*&0&0\end{array}\right]
$$
for the matrix in the middle of the right-hand side in \eqref{Sm}. Then the sequence $\{{\cal H}_{m_j}\}_{j\in\NN}$ is convergent as well, since ${\cal H}_{m_j}(\la)= Q_{m_j}{\cal S}_{m_j}(\la)Q_{m_j}^*$, and both $\{Q_{m_j}\}_{j\in\NN}$ (and, as a consequence, $\{Q_{m_j}\}_{j\in\NN}$ and $\{Q_{m_j}^*\}_{j\in\NN}$) and $\{{\cal S}_{m_j}\}_{j\in\NN}$ are convergent, because any subsequence of ${\cal S}_m$ converges to its limit. Moreover, by continuity of the zero blocks in the block-structure, $\{{\cal H}_{m_j}\}_{j\in\NN}$ converges to a matrix pencil of the form
\begin{equation}\label{h}
{\cal H}(\la)=\left[\begin{array}{ccc}{\cal A}(\la)&{\cal B}(\la)&{\cal C}(\la)\\{\cal B}(\la)^*&{\cal R}(\la)&0\\{\cal C}(\la)^*&0&0\end{array}\right],
\end{equation}
with
\begin{itemize}
\item ${\cal C}(\la)$ being of size $d\times (n-r+d)$,

\item ${\cal C}(\la)^*$ being of size $(n-r+d)\times d$, and

\item ${\cal R}(\la)$ being of size $(r-2d)\times(r-2d)$.
\end{itemize}

Therefore, the sequence $\{{\cal S}_{m_j}\}_{j\in\NN}$ converges to $Q^*{\cal H}(\la)Q$, where $Q:=\displaystyle\lim_{j\rightarrow\infty}Q_{m_j}$ is unitary. Since $\{{\cal S}_m\}_{m\in\NN}$ is convergent, any subsequence must converge to its limit, so $\displaystyle\lim_{m\rightarrow\infty}{\cal S}_m={\cal S}=Q^*{\cal H}(\la)Q$.  In the rest of the proof, it is important to bear in mind that ${\cal S}(\la)$ (resp. ${\cal S}(\la_0)$ for any particular $\lambda_0 \in \CC$) has rank at most $r$, and has rank exactly $r$ if and only if $\rank {\cal C}(\la) = \rank {\cal C}(\la)^* = d$ (resp. $\rank {\cal C}(\la_0) = \rank {\cal C}(\la_0)^* = d$) and $\rank {\cal R}(\la) = r-2d$ (resp. $\rank {\cal R}(\la_0) = r-2d$). This follows easily from the block structure of $\cH (\la)$ and the fact that $\rank {\cal C}(\la) \leq d$ and $\rank {\cal C}(\la)^* \leq d$.

 Let us first assume that $d'<d$ and $\lim_{m\rightarrow\infty}\cS_m=\cS\in\bun^H(\cK_{c',d'})$. Then,
\begin{eqnarray}
{\cal S}\in\bun^H({\cal K}_{c',d'}),\qquad \mbox{\rm with $d'<d$}\label{buncond}, \qquad\mbox{\rm and}\\
\mbox{\rm ${\cal S}$ has exactly $r-2d'$ (real) simple eigenvalues}\label{different}.
\end{eqnarray}
 Note that conditions \eqref{buncond}--\eqref{different} mean that ${\cal S}$ is congruent to ${\cal K}_{c',d'}$, for some real eigenvalues $a_1,\hdots,a_{r-2d'}$ in \eqref{max}, different from each other. Moreover, \eqref{buncond} implies that
 \begin{equation}\label{rankcond}
 \rank {\cal S}=r,
 \end{equation}
   which is equivalent to $\rank {\cal C}(\la) = \rank {\cal C}(\la)^* = d$ and  $\rank {\cal R}(\la) = r-2d$. Then, ${\cal R}$ in \eqref{h} is a regular pencil with $r-2d$ eigenvalues (counting multiplicities). Let us denote these eigenvalues by $\widetilde a_1,\hdots,\widetilde a_{r-2d}$ (in principle some of them might be infinite). By \eqref{buncond}, the pencil ${\cal S}$ has more than $r-2d$ eigenvalues, which are all real and distinct.
  If  $\rank {\cal C}(\la_0)=\rank {\cal C}(\la_0)^*=d$ for all $\la_0\in\RR$, then $\rank {\cal S}(\mu)=\rank {\cal S}=r$ for all $\mu\in\RR$ such that $\mu\neq \widetilde a_i$ ($i=1,\hdots,r-2d$), which means that ${\cal S}$ has  at most $r-2d$ real eigenvalues, which is a contradiction. Therefore, there must be some $\la_0\in\RR$ such that $\rank {\cal C}(\la_0) = \rank {\cal C}(\la_0)^* <d$. In particular, such $\la_0$ is an eigenvalue of ${\cal S}$, since the number of linearly independent rows of ${\cal S}(\la_0)$ is less than $r$.
  Now, we are going to see that, in this case, $\la_0$ is an eigenvalue of ${\cal S}$ with algebraic multiplicity at least 2, which is in contradiction with \eqref{different} as well. For this, we will prove that all $r\times r$ non-identically zero minors of ${\cal H}$ have $(\la-\la_0)^2$ as a factor. In order for an $r\times r$ submatrix of ${\cal H}$ to have non-identically zero determinant, it must contain fewer than $d+1$ columns among the last $n-r+d$ columns of ${\cal H}$ (namely, those corresponding to ${\cal C}$), and fewer than $d+1$ rows among the last $n-r+d$ rows of ${\cal H}$ (namely, those corresponding to ${\cal C}^*$). This is because any set of $d+1$ columns among the last $n-r+d$ columns of ${\cal H}$ is linearly dependent, and the same for the last $n-r+d$ rows. As a consequence, any $r\times r$ non-identically zero minor, $M(\la)$, of ${\cal H}$ is of the form:
 $$
 M(\la):=\det\left[\begin{array}{ccc}{\cal A}(\la)&{\cal B}(\la)&{\cal C}_1(\la)\\{\cal B}(\la)^*&{\cal R}(\la)&0\\{\cal C}_2(\la)^*&0&0\end{array}\right],
 $$
 where ${\cal C}_1$ (respectively, ${\cal C}_2^*$) is a submatrix of ${\cal C}$ (resp., ${\cal C}^*$) with size $d\times d$. Therefore, $M(\la)=\pm\det {\cal R}\cdot\det{\cal C}_1\cdot\det{\cal C}_2^*$. Since $\rank{\cal C}_1(\la_0)<d$ and $\rank{\cal C}_2(\la_0)^*<d$, the binomial $(\la-\la_0)=(\la-\overline{\la_0})$ is a factor of both $\det {\cal C}_1$ and $\det{\cal C}_2^*$, so $(\la-\la_0)^2$ is a factor of $M(\la)$.  Note that $\lambda_0\in\RR$ is key in this conclusion, in order to guarantee $(\la-\la_0)=(\la-\overline{\la_0})$.

 Therefore, the gcd of all $r\times r$ non-identically zero minors of ${\cal H}$ is a multiple of $(\la-\la_0)^2$. This implies (see, for instance, \cite[p. 141]{Gant59}) that the algebraic multiplicity of $\la_0$ as an eigenvalue of ${\cal H}$, and so of ${\cal S}$, is at least $2$.

Now, assume that $d'=d$ but $c'\neq c$.  Then, $\lim_{m\rightarrow\infty}\cS_m=\cS\in\bun^H(\cK_{c',d})$, which implies that $\cS(\la)$ is congruent to $\cK_{c',d} (\la)$ as in \eqref{max}, for some real distinct eigenvalues $a_1 , \ldots , a_{r-2d}$. Thus, if we express $\cS(\la) = \la X + Y$ and  $\cK_{c',d} (\la) = \la
X_{c',d} + Y_{c',d}$, with $X,Y, X_{c',d}$ and $Y_ {c',d}$ being constant Hermitian matrices, then $X$ and $X_ {c',d}$ are $*$-congruent and both have the same signature. This signature is
\begin{equation} \label{eq.auxxfrosig1}
 \mathrm{signature} (X) = (c'+m_+, r-2d-c'+ m_-,m_0),
\end{equation}
where $(m_+, m_-,m_0)$ is the signature of the blocks $\diag(\overbrace{{\cal M}_{\alpha+1},\hdots,{\cal M}_{\alpha+1}}^{s},
\overbrace{{\cal M}_{\alpha},\hdots,{\cal M}_{\alpha}}^{n-r-s})$ in $\cK_{c',d} (\la)$. On the other hand, if we express $\cS_m(\la) = \la X_m + Y_m \in \bun^H(\cK_{c,d})$ and  $\cK_{c,d} (\la) = \la X_{c,d} + Y_{c,d}$, with $X_m,Y_m, X_{c,d}$ and $Y_ {c,d}$ being constant Hermitian matrices, then $X_m$ and $X_ {c,d}$ are $*$-congruent and both have the same signature. This signature is
\begin{equation} \label{eq.auxxfrosig2}
 \mathrm{signature} (X_m) = (c+m_+, r-2d-c+ m_-,m_0).
\end{equation}
Thus,  $c'\ne c$ implies that $c+m_+ < c'+m_+$ or $ r-2d-c+ m_- <  r-2d-c'+ m_-$, which together with $\lim_{m\rightarrow\infty} X_m=X$, \eqref{eq.auxxfrosig1}, and \eqref{eq.auxxfrosig2} contradict Theorem 4.3 in \cite{LaRo05}.

\end{proof}

It is interesting to compare Theorem \ref{main_th} with Theorem 3 in \cite{ddd20}, which gives the generic complete eigenstructures of symmetric $n\times n$ pencils with bounded rank. In particular, the generic singular symmetric pencils contain complex eigenvalues, that may be non-real. However, this is not the case of generic Hermitian pencils, that can only contain real eigenvalues. Moreover, the number of generic eigenstructures for Hermitian pencils, provided in Theorem \ref{main_th}, is larger than the one for symmetric pencils, and this is due to the presence of the sign characteristic. The following result, which is an immediate consequence of Theorem \ref{main_th}, provides the number of different generic complete eigenstructures in the Hermitian case.

\begin{corollary}
The number of different generic bundles provided in Theorem {\rm\ref{main_th}} is equal to $\displaystyle\left(\left\lfloor\frac{r}{2}\right\rfloor+1\right)\left\lfloor\frac{r+3}{2}\right\rfloor$.
\end{corollary}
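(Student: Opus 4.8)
The plan is to reduce the statement to a purely combinatorial count, mirroring the proof of Corollary \ref{gencount_coro}. The bundles referred to in the statement are the $\bun^H(\cK_{c,d})$ of Theorem \ref{main_th}, indexed by the pairs $(c,d)$ with $0\leq d\leq\lfloor\frac r2\rfloor$ and $0\leq c\leq r-2d$. First I would observe that, by Theorem \ref{main_th}-(ii), the assignment $(c,d)\mapsto\bun^H(\cK_{c,d})$ is injective on this index set: if $(c,d)\neq(c',d')$, then $\bun^H(\cK_{c',d'})\cap\overline{\bun^H}(\cK_{c,d})=\emptyset$, and since a bundle is always contained in its own closure this forces $\bun^H(\cK_{c',d'})\neq\bun^H(\cK_{c,d})$. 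Combined with Theorem \ref{main_th}-(i) and (iii), which guarantee that these are exactly the generic bundles of $\pen_{n\times n}^H(r)$ (their closures cover the whole set, and none is contained in the closure of another), the number of generic bundles equals the cardinality of the index set.

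Second, I would evaluate that cardinality. For each fixed $d\in\{0,1,\dots,\lfloor\frac r2\rfloor\}$ there are exactly $r-2d+1$ admissible values of $c$, so the total is $\sum_{d=0}^{\lfloor r/2\rfloor}(r-2d+1)$. Computing this sum exactly as in Corollary \ref{gencount_coro}, but with $n$ replaced by $r$,
\[
\sum_{d=0}^{\lfloor r/2\rfloor}(r-2d+1)=(r+1)\Bigl(\Bigl\lfloor\tfrac r2\Bigr\rfloor+1\Bigr)-\Bigl\lfloor\tfrac r2\Bigr\rfloor\Bigl(\Bigl\lfloor\tfrac r2\Bigr\rfloor+1\Bigr)=\Bigl(\Bigl\lfloor\tfrac r2\Bigr\rfloor+1\Bigr)\Bigl(r+1-\Bigl\lfloor\tfrac r2\Bigr\rfloor\Bigr),
\]
and since $r+1-\lfloor\frac r2\rfloor=\lfloor\frac{r+3}{2}\rfloor$ for every integer $r$, this equals $\bigl(\lfloor\frac r2\rfloor+1\bigr)\lfloor\frac{r+3}{2}\rfloor$, as claimed.

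The argument is essentially routine; the only mild subtlety is making sure both parities of $r$ come out right, which is handled automatically by working with the closed form $(r+1)(\lfloor r/2\rfloor+1)-\lfloor r/2\rfloor(\lfloor r/2\rfloor+1)$ rather than splitting into cases, together with the elementary identity $r+1-\lfloor r/2\rfloor=\lfloor(r+3)/2\rfloor$. No genuine obstacle is expected, since all the substantive content—that the $\bun^H(\cK_{c,d})$ are pairwise distinct and exhaust the generic eigenstructures—has already been established in Theorem \ref{main_th}.
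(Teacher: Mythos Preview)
Your proposal is correct and follows essentially the same approach as the paper: both reduce the count to the sum $\sum_{d=0}^{\lfloor r/2\rfloor}(r-2d+1)$ and simplify it identically. You add an explicit injectivity argument from Theorem~\ref{main_th}-(ii) that the paper leaves implicit, but otherwise the arguments coincide.
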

\begin{proof}
The result is a consequence of the following identities:
$$
\begin{array}{ccl}
\displaystyle\sum_{d=0}^{\lfloor r/2\rfloor}(r-2d+1)&=&\displaystyle(r+1)\sum_{d=0}^{\lfloor r/2\rfloor}1-2\sum_{d=0}^{\lfloor r/2\rfloor}d=(r+1)\left(\left\lfloor\frac{r}{2}\right\rfloor+1\right)-\left\lfloor\frac{r}{2}\right\rfloor\left(\left\lfloor\frac{r}{2}\right\rfloor+1\right)\\&=&\displaystyle\left\lfloor\frac{r+3}{2}\right\rfloor\left(\left\lfloor\frac{r}{2}\right\rfloor+1\right).
\end{array}
$$
\end{proof}

In particular, when $r=n-1$, there are $(\lfloor n/2\rfloor+1)(\lfloor(n-1)/2\rfloor+1)$ different generic bundles. This implies that the set of complex singular Hermitian $n\times n$ pencils is the union of $(\lfloor n/2\rfloor+1)(\lfloor(n-1)/2\rfloor+1)$ different bundle closures. This number is greater than $1$, provided that $n\geq2$. This is in contrast with Theorem 4 in \cite{Wate84}, where it is claimed that the set of singular complex Hermitian $n\times n$ pencils is an irreducible closed set. Then, according to this result, the union of all closures of the generic Hermitian bundles of rank $n-1$ would be an irreducible closed (in the Zariski topology) set. Related to this, it is interesting to compare with the case of symmetric pencils, considered in \cite{ddd20}. It is proved in that reference (Theorem 3) that the set of symmetric singular pencils is the union of a finite number of bundle closures, a result which is the counterpart of Theorem \ref{main_th} for symmetric pencils. Moreover, in Theorem 4 of \cite{ddd20} it is also proved that each of these bundle closures is a closed (in the Zariski topology) irreducible set. However, the arguments in the proof of that result are not valid anymore for Hermitian pencils, since the map $\Phi$ in that proof, which is essentially a congruence of matrix pencils, in the case of Hermitian pencils should be a $*$-congruence instead, which is not a polynomial map.

\begin{remark}[Skew-Hermitian matrix pencils]
A matrix pencil ${\cal N}(\la)=A+\la B$, with $A,B\in\CC^{n\times n}$, is called {\em skew Hermitian} when $A^*=-A$ and $B^*=-B$. Notably, ${\cal N}$ is skew Hermitian if and only if ${\rm i}{\cal N}$ is Hermitian.  Therefore the generic complete eigenstructures of skew-Hermitian matrix pencils can be obtained from Theorems {\rm\ref{regular_th}} and {\rm\ref{main_th}}, multiplying \eqref{max-real} and \eqref{max} by ${\rm i}$.
\end{remark}

\section{Codimension computations}\label{codim_sec}
The Hermitian orbit of a Hermitian pencil $\cH(\la)$ is a differentiable manifold over $\RR$, whose tangent space at the point $\cH$ is the following vector subspace of $\pen_{n\times n}^H$ (over $\RR$)
\begin{equation}\label{tansp}
T^H(\cH):=\{P^*\cH(\la) +\cH(\la)P :\,\ P\in \CC^{n \times n}\}.
\end{equation}
A way to see that this is the tangent space follows similar arguments to the ones used in \cite[p. 74]{DeEd95} for orbits under strict equivalence. More precisely, consider a small perturbation of $\cH$ in $\orb^H(\cH)$, namely $(I+\delta P)^*\cH(\la)(I+\delta P)=\cH(\la)+\delta\cdot \left(P^*\cH(\la)+\cH(\la)P\right) +O(\delta ^2)$, for some ``small" real quantity $\delta$ and $P\in\CC^{n\times n}$, and then take the first-order term of this perturbation (in $\delta$), namely $P^*\cH(\la)+\cH(\la)P$ (see also \cite[p. 1432]{DmKa14} for the congruence orbit). Note also that, since $\cH$ is Hermitian, then all points in $T^H(\cH)$ belong to $\pen_{n\times n}^H$, so $T^H(\cH)$ is a (real) vector subspace of $\pen_{n\times n}^H$.

The dimension of $T^H(\cH)$ is the
\emph{dimension of the Hermitian orbit} of
$\cH$ and the dimension of the normal space to the orbit at the point
$\cH$ is the \emph{codimension of the Hermitian orbit} of
$\cH$ (denoted by $\codim_{\RR}  \orb^H (\cH)$),
where the orthogonality is defined with respect to the Frobenius inner product
$\langle A + \lambda B, C + \lambda D \rangle=\tr (AC^*+BD^*),$
and $\tr (X) $ denotes the trace of the matrix $X$. We also emphasize that the normal space to $T^H(\cH)$ is considered in the vector space $\pen_{n\times n}^H$ (and not in $\pen_{n\times n}$).

In order to get the codimension of $\orb^H(\cH)$ we need the following result.

\begin{theorem}\label{codimtodim}
The codimension of the Hermitian orbit
of an $n \times n$ Hermitian matrix pencil $A + \lambda B$ is
equal to the dimension of the solution space of
\begin{equation}
\label{syst}
\begin{split}
X^*A+AX=0, \\
X^*B+BX=0.
\end{split}
\end{equation}
\end{theorem}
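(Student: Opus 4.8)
The plan is to reduce the statement to an elementary dimension count inside the finite-dimensional real vector space $\pen_{n\times n}^H$. By definition, $\codim_{\RR}\orb^H(\cH)$ is the dimension of the normal space to the orbit at $\cH$, i.e.\ the dimension of the orthogonal complement of the tangent space $T^H(\cH)$ of \eqref{tansp} inside $\pen_{n\times n}^H$, with respect to the Frobenius inner product. Since $\pen_{n\times n}^H$ is a finite-dimensional real inner product space, this orthogonal complement has dimension $\dim_{\RR}\pen_{n\times n}^H-\dim_{\RR}T^H(\cH)$, so it will be enough to evaluate these two quantities.

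First I would record that $\dim_{\RR}\pen_{n\times n}^H=2n^2$. Indeed, an $n\times n$ Hermitian matrix is determined by its $n$ (necessarily real) diagonal entries together with its $\binom{n}{2}$ complex strictly-upper-triangular entries, so the space of $n\times n$ Hermitian matrices has real dimension $n+2\binom{n}{2}=n^2$; a Hermitian pencil is a pair of such matrices, hence $\dim_{\RR}\pen_{n\times n}^H=2n^2$. Note this is exactly $\dim_{\RR}\CC^{n\times n}$, and this numerical coincidence is the reason the theorem holds.

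Next I would analyze the map $\mu:\CC^{n\times n}\to\pen_{n\times n}^H$ given by $\mu(X):=X^*\cH(\la)+\cH(\la)X=(X^*A+AX)+\la(X^*B+BX)$. One checks quickly that $\mu(X)$ is Hermitian (using $A^*=A$ and $B^*=B$), so $\mu$ is well defined into $\pen_{n\times n}^H$; that $\mu$ is $\RR$-linear but \emph{not} $\CC$-linear, because $X\mapsto X^*$ is conjugate-linear; that $\operatorname{im}\mu=T^H(\cH)$ directly from \eqref{tansp}; and that $\ker\mu$ is exactly the real solution space of the system \eqref{syst} (with $X$ in place of the unknown there). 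Applying the rank--nullity theorem \emph{over $\RR$} to $\mu$ then yields $2n^2=\dim_{\RR}\CC^{n\times n}=\dim_{\RR}\ker\mu+\dim_{\RR}T^H(\cH)$.

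Combining the two computations gives $\codim_{\RR}\orb^H(\cH)=\dim_{\RR}\pen_{n\times n}^H-\dim_{\RR}T^H(\cH)=2n^2-\bigl(2n^2-\dim_{\RR}\ker\mu\bigr)=\dim_{\RR}\ker\mu$, which is precisely the dimension of the solution space of \eqref{syst}, as claimed. The argument is short; the only points requiring care are (a) that every vector space and every rank/nullity statement here must be taken over $\RR$, since $\mu$ is merely $\RR$-linear, and (b) the coincidence $\dim_{\RR}\pen_{n\times n}^H=\dim_{\RR}\CC^{n\times n}$, without which the two dimensions would not match. I expect (a) to be the easiest place to make a slip, so I would state explicitly at the outset that all dimensions are real dimensions.
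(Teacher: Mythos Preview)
Your proposal is correct and follows essentially the same approach as the paper: define the $\RR$-linear map $X\mapsto X^*\cH(\la)+\cH(\la)X$ from $\CC^{n\times n}$ onto $T^H(\cH)$, apply rank--nullity over $\RR$, and use the coincidence $\dim_{\RR}\pen_{n\times n}^H=\dim_{\RR}\CC^{n\times n}=2n^2$. Your write-up is slightly more explicit than the paper's in justifying that dimension coincidence and in flagging the $\RR$-linearity issue, but the argument is the same.
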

\begin{proof}
Define the mapping
$$f: \C^{n \times n} \rightarrow T^H(A + \lambda B), \quad X \mapsto X^* (A + \lambda B) + (A + \lambda B) X,$$
where $T^H(A + \lambda B)$ is the tangent space of $\orb^H(A+\la B)$ at
the point $A + \lambda B$.
The mapping $f$ is a surjective homomorphism of vector spaces over $\RR$. Therefore $\dim_{\RR} \C^{n \times n} = \dim_{\RR} T^H(A + \lambda B) + \dim_{\RR} V(A + \lambda B),$ where $V(A + \lambda B):=\{ X \in \C^{n \times n}:\ X^* (A + \lambda B) + (A + \lambda B) X=0 \}=\{ X \in \C^{n \times n}:\ X^* A  + AX=0=X^*B+ B X \}$.
At every point $A + \lambda B$ there is an isomorphism
$$ \pen_{n\times n}^H\ \simeq T^H(A + \lambda B) \oplus N(A + \lambda B), $$
in which $N(A + \lambda B)$ is the normal space to $T^H(A+\la B)$ at the point $A + \lambda B$ with respect to the inner product. Therefore,
\begin{align*}
\codim_{\RR} \orb^H (A + \lambda B) &= \dim_{\RR} N(A + \lambda B)= \dim_{\RR} (\pen_{n\times n}^H) - \dim_{\RR} T^H(A + \lambda B) \\
&= \dim_{\RR} (\pen_{n\times n}^H) - \dim_{\RR} \C^{n \times n} + \dim_{\RR} V(A,B) \\
&= 2n^2 - 2n^2 + \dim_{\RR} V(A,B) =\dim_{\RR} V(A,B).
\end{align*}
\end{proof}

By Theorem \ref{codimtodim}, for obtaining the codimension of $\orb^H(A+\la B)$, when $A+\la B$ is Hermitian, it is enough to obtain the dimension over $\mathbb{R}$ of the solution space of the system of matrix equations \eqref{syst}. Now we focus on computing the latter.

Let $A + \lambda B=(A_1 + \lambda B_1)\oplus (A_2 + \lambda B_2)$  be a Hermitian matrix pencil. Consider the system of matrix equations \eqref{syst} associated with $A + \lambda B$. Partitioning the unknown matrix $X$ we rewrite the system \eqref{syst} as follows
\begin{equation*}
\begin{aligned}
\begin{bmatrix}
X_{11}^*&X_{21}^*
 \\ X_{12}^*&X_{22}^*
\end{bmatrix}\begin{bmatrix}
A_1&0 \\
0 &A_2
\end{bmatrix}+\begin{bmatrix}
A_1&0 \\
0 &A_2
\end{bmatrix}\begin{bmatrix}
X_{11}&X_{12}
 \\ X_{21}&X_{22}
\end{bmatrix}=
     \begin{bmatrix}
0& 0 \\ 0& 0
\end{bmatrix}, \\
\begin{bmatrix}
X_{11}^*&X_{21}^*
 \\ X_{12}^*&X_{22}^*
\end{bmatrix}\begin{bmatrix}
B_1&0 \\
0 &B_2
\end{bmatrix}+\begin{bmatrix}
B_1&0 \\
0 &B_2
\end{bmatrix}\begin{bmatrix}
X_{11}&X_{12}
 \\ X_{21}&X_{22}
\end{bmatrix}=
     \begin{bmatrix}
0& 0 \\ 0& 0
\end{bmatrix}.
\end{aligned}
\end{equation*}
Operating in the left-hand side of the previous identities we obtain
\begin{equation}\label{decoupled}
\begin{aligned}
\begin{bmatrix}
X_{11}^*A_1 + A_1X_{11}&X_{21}^*A_2 + A_1X_{12}
 \\ X_{12}^*A_1 + A_2X_{21}&X_{22}^*A_2 + A_2X_{22}
\end{bmatrix}=
     \begin{bmatrix}
0& 0 \\ 0& 0
\end{bmatrix}, \\
\begin{bmatrix}
X_{11}^*B_1 + B_1X_{11}&X_{21}^*B_2 + B_1X_{12}
 \\ X_{12}^*B_1 + B_2X_{21}&X_{22}^*B_2 + B_2X_{22}
\end{bmatrix}=
     \begin{bmatrix}
0& 0 \\ 0& 0
\end{bmatrix},
\end{aligned}
\end{equation}
where in each equation the off-diagonal blocks are the conjugate transposed of each other.
 In the previous system of equations there are two types of equations, namely: (a) equations of the form $X^*A+AX=0$ and (b) equations of the form $YA+BZ=0$. The coefficients $A$ and $B$ are the coefficients of the pencils in the diagonal blocks of the direct sum. Then we define, for the Hermitian matrix pencils $A_i + \lambda B_i$ and $A_j + \lambda B_j$, the following systems of matrix equations:
\begin{align*}
\syst (A_i + \lambda B_i):&\qquad \qquad X^*A_i+A_iX=0,&&X^*B_i+B_iX=0;\\
\syst (A_i + \lambda B_i,\, A_j + \lambda B_j):&\qquad \qquad ZA_j+A_iY=0,&&ZB_j+B_iY=0.
\end{align*}

In the system \eqref{syst}, one can assume via a $*$-congruence and a change of variable that $A+\la B$ is given in HKCF. Then,
 we can decouple the system into a set of systems like \eqref{decoupled} (partitioned according to the number of blocks in the HKCF, which obviously may be larger than $2$) where in each system $A_1+\la B_1$ and $A_2+\la B_2$ are canonical blocks. To obtain the dimension of the solution space of \eqref{syst} it is thus enough to sum up the dimensions of the solution spaces of all systems \eqref{decoupled}.

 Following Arnold \cite[\S 5.5]{Arno71},  and taking into account that the non-real eigenvalues of Hermitian pencils appear in conjugate pairs, given a Hermitian pencil $\cH(\la)$, the {\em codimension of $\bun^H(\cH)$}  over $\mathbb{R}$, denoted by $\codim_\RR\bun^H(\cH)$, is equal to the codimension of $\orb^H(\cH)$ minus the number of different eigenvalues of $\cH$ (see also \cite[p. 1441]{DmKa14} for congruence bundles of general pencils). From this definition and the codimension of $\orb^H({\cal T}_{c,d})$ we can obtain the codimension of $\bun^H({\cal T}_{c,d})$, with ${\cal T}_{c,d}(\la)$ being the generic Hermitian pencils in either Theorem \ref{regular_th} or \ref{main_th}.

 \subsection{Codimension of generic regular bundles}
 \label{codimreg}

 In this section, we compute the codimension of the generic bundles in Theorem \ref{regular_th}.

 We start by computing the dimension of the solution space of $\syst (A_i + \lambda B_i)$ for $A_i + \lambda B_i \in \{ \sigma  \cJ_1(a), \cJ_1^H(\mu,\overline{\mu})  \}$, with $a \in \RR$ and $\mu \in \CC$ having positive imaginary part. First consider $\sigma =  1$ and $a \in \RR$, resulting in

$$
\syst (\cJ_1(a)) : \qquad  \overline{x}a+a x=0, \quad \overline{x}+ x=0,
$$
whose solution is $x = {\rm i} b$, with $b  \in \RR$. The dimension of  this solution space is equal to $1$.
The solution of $\syst (-\cJ_1(a))$ (i.e., $\sigma =  -1$) remains the same.

Consider the system $\syst(\cJ_1^H(\mu,\overline{\mu}))$:
\begin{equation} \label{JHsyst}
\begin{aligned}
\begin{bmatrix}
\overline{x_{11}}&\overline{x_{21}}
 \\ \overline{x_{12}}&\overline{x_{22}}
\end{bmatrix}
\begin{bmatrix}
0&1 \\
1&0
\end{bmatrix}+\begin{bmatrix}
0&1 \\
1&0
\end{bmatrix}\begin{bmatrix}
x_{11}&x_{12}
 \\ x_{21}&x_{22}
\end{bmatrix}=
     \begin{bmatrix}
0& 0 \\ 0& 0
\end{bmatrix}, \\
\begin{bmatrix}
\overline{x_{11}}&\overline{x_{21}}
 \\ \overline{x_{12}}&\overline{x_{22}}
\end{bmatrix}
\begin{bmatrix}
0&\overline{\mu} \\
\mu&0
\end{bmatrix}+\begin{bmatrix}
0& \overline{\mu} \\
\mu&0
\end{bmatrix}\begin{bmatrix}
x_{11}&x_{12}
 \\ x_{21}&x_{22}
\end{bmatrix}=
     \begin{bmatrix}
0& 0 \\ 0& 0
\end{bmatrix}.
\end{aligned}
\end{equation}
Multiplying the matrices in the first equation in \eqref{JHsyst} we have
\begin{equation*}
\begin{aligned}
\begin{bmatrix}
\overline{x_{21}}+x_{21}&
\overline{x_{11}}+x_{22}\\
\overline{x_{22}}+x_{11}&
\overline{x_{12}}+x_{12}
\end{bmatrix} =
\begin{bmatrix}
0& 0 \\ 0& 0
\end{bmatrix},
\end{aligned}
\end{equation*}
whose solution is then replaced into the second equation:
\begin{equation*}
\begin{aligned}
\begin{bmatrix}
\overline{x_{11}}& -{\rm i} b_{21}
 \\ -{\rm i} b_{12}&-{x}_{11}
\end{bmatrix}
\begin{bmatrix}
0&\overline{\mu} \\
\mu&0
\end{bmatrix}+\begin{bmatrix}
0& \overline{\mu} \\
\mu&0
\end{bmatrix}\begin{bmatrix}
x_{11}& {\rm i} b_{12}\\
{\rm i} b_{21}&-\overline{x_{11}}
\end{bmatrix}=
     \begin{bmatrix}
0& 0 \\ 0& 0
\end{bmatrix},
\end{aligned}
\end{equation*}
where $b_{12}, b_{21} \in \mathbb{R}$, or, equivalently,
\begin{equation*}
\begin{aligned}
\begin{bmatrix}
{\rm i} b_{21} (\overline{\mu} - \mu) & \overline{x_{11}} \overline{\mu} - \overline{x_{11}} \overline{\mu}\\
 {x}_{11}\mu -{x}_{11}\mu & {\rm i} b_{12} (\mu -\overline{\mu})
\end{bmatrix}=
\begin{bmatrix}
0& 0 \\ 0& 0
\end{bmatrix}.
\end{aligned}
\end{equation*}
Therefore $b_{12} = b_{21}=0 , x_{11} \in \CC$, and the dimension over $\RR$ of the solution space of $\syst(\cJ_1^H(\mu,\overline{\mu}))$ is $2$.

Now we compute the dimension of the solution space of $\syst (A_i + \lambda B_i,\, A_j + \lambda B_j)$ for $A_i + \lambda B_i, A_j + \lambda B_j \in \{ \sigma  \cJ_1(a), \cJ_1^H(\mu,\overline{\mu}) \}$, with $a \in  \RR, \mu \in\CC$ and $\im \mu >0$.

The system $\syst (\sigma_i  \cJ_1(a_i),\, \sigma_j  \cJ_1(a_j))$ reads $z+y=0=a_jz+a_iy$ when $\sigma_i=\sigma_j$ and $z-y=0=a_jz-a_iy$ when $\sigma_i=-\sigma_j$. Since $a_i\neq a_j$, in both cases the only solution is $y=z=0$, so the dimension of the solution space of $\syst (\sigma_i  \cJ_1(a_i),\, \sigma_j  \cJ_1(a_j))$ is $0$.

Next type of systems we need to consider is $\syst(\cJ_1^H(\mu,\overline{\mu}),\cJ_1^H(\eta,\overline{\eta}))$, where $\mu,  \eta$ have positive imaginary parts and $\mu \neq \eta$:
\begin{equation*}
\begin{aligned}
\begin{bmatrix}
z_{11}&z_{12}
 \\ z_{21}&z_{22}
\end{bmatrix}
\begin{bmatrix}
0&1 \\
1&0
\end{bmatrix}+\begin{bmatrix}
0&1 \\
1&0
\end{bmatrix}\begin{bmatrix}
y_{11}&y_{12}
 \\ y_{21}&y_{22}
\end{bmatrix}=
     \begin{bmatrix}
0& 0 \\ 0& 0
\end{bmatrix}, \\
\begin{bmatrix}
z_{11}&z_{12}
 \\ z_{21}&z_{22}
\end{bmatrix}
\begin{bmatrix}
0&\overline{\mu} \\
\mu&0
\end{bmatrix}+\begin{bmatrix}
0& \overline{\eta} \\
\eta&0
\end{bmatrix}\begin{bmatrix}
y_{11}&y_{12}
 \\ y_{21}&y_{22}
\end{bmatrix}=
     \begin{bmatrix}
0& 0 \\ 0& 0
\end{bmatrix},
\end{aligned}
\end{equation*}
or, equivalently,
\begin{equation*}
\begin{aligned}
\begin{bmatrix}
z_{11}&z_{12}
 \\ z_{21}&z_{22}
\end{bmatrix}
\begin{bmatrix}
0&\overline{\mu} \\
\mu&0
\end{bmatrix}-\begin{bmatrix}
0& \overline{\eta} \\
\eta&0
\end{bmatrix}\begin{bmatrix}
z_{22}&z_{21}
 \\ z_{12}&z_{11}
\end{bmatrix}=
     \begin{bmatrix}
0& 0 \\ 0& 0
\end{bmatrix}.
\end{aligned}
\end{equation*}
After performing the matrix multiplications we obtain:
\begin{equation*}
\begin{aligned}
\begin{bmatrix}
z_{12} (\mu - \overline{\eta}) & z_{11} (\overline{\mu} - \overline{\eta}) \\
z_{22} (\mu -\eta) & z_{21} (\overline{\mu} - \eta)
\end{bmatrix}=
     \begin{bmatrix}
0& 0 \\ 0& 0
\end{bmatrix}.
\end{aligned}
\end{equation*}
If ${\rm re}(\mu) \neq  {\rm re}(\eta)$ then $z_{ij} = 0$. If ${\rm re}(\mu)  =  {\rm re}(\eta)$, taking into   account that ${\rm im}(\mu), {\rm im}(\eta) > 0$, and $\mu \neq \eta$, we have ${\rm im}(\mu) \neq  \pm {\rm im}(\eta)$, which in turn result in $z_{ij} = 0$.  Thus, the dimension of the solution space of $\syst(\cJ_1^H(\mu,\overline{\mu}),\cJ_1^H(\eta,\overline{\eta}))$ is zero.

Finally, we consider the systems $\syst(\cJ_1^H(\mu,\overline{\mu}), \sigma \cJ_1(a))$, where $\mu$ has a positive imaginary part, $\sigma = \pm 1$, and $a \in  \RR$. For $\sigma = 1$, the system is
\begin{equation*}
\begin{aligned}
\begin{bmatrix}
z_{11}&z_{12}
\end{bmatrix}
\begin{bmatrix}
0&1 \\
1&0
\end{bmatrix}+\begin{bmatrix}
1
\end{bmatrix}\begin{bmatrix}
y_{11}&y_{12}\\
\end{bmatrix}=
     \begin{bmatrix}
0& 0
\end{bmatrix}, \\
\begin{bmatrix}
z_{11}&z_{12}
\end{bmatrix}
\begin{bmatrix}
0&\overline{\mu} \\
\mu&0
\end{bmatrix}+
\begin{bmatrix}
a
\end{bmatrix}\begin{bmatrix}
y_{11}&y_{12}
\end{bmatrix}=
     \begin{bmatrix}
0& 0
\end{bmatrix},
\end{aligned}
\end{equation*}
which is equivalent to a single equation
\begin{equation*}
\begin{aligned}
\begin{bmatrix}
z_{11}&z_{12}
\end{bmatrix}
\begin{bmatrix}
0&\overline{\mu} \\
\mu&0
\end{bmatrix}+
\begin{bmatrix}
a
\end{bmatrix}\begin{bmatrix}
-z_{12}&-z_{11}
\end{bmatrix}=
     \begin{bmatrix}
0& 0
\end{bmatrix}.
\end{aligned}
\end{equation*}
Multiplying the matrices we get
\begin{equation*}
\begin{aligned}
\begin{bmatrix}
z_{12} \mu - a z_{12} & z_{11} \overline{\mu} - a z_{11}
\end{bmatrix}=
     \begin{bmatrix}
0& 0
\end{bmatrix}.
\end{aligned}
\end{equation*}
Since ${\rm im}(\mu) > 0$, we must have $z_{ij} = 0$ and thus $y_{ij} = 0$. For $\sigma = -1$, the solution is exactly the same. Thus, the dimension of the solution space of  $\syst(\cJ_1^H(\mu,\overline{\mu}), \sigma \cJ_1(a))$ is always $0$.

We note that only the dimensions of the solution spaces for $\syst(\cJ_1^H(\mu,\overline{\mu}))$ and $\syst(\sigma \cJ_1(a))$  (2 and 1, respectively) contribute to the dimension of the solution spaces for $A + \lambda  B$ being equal to the generic Hermitian canonical form ${\cal R}_{c,d}$. Summing up all these dimensions of the solution spaces of matrix equations we obtain $\codim_\RR \orb^H({\cal R}_{c,d}) = n$, which implies the following theorem:
\begin{theorem} The codimension in $\pen_{n\times n}^H$ of the Hermitian bundle of ${\cal R}_{c,d} (\la)$ of generic Hermitian pencils in Theorem {\rm\ref{regular_th}} is
\begin{equation*}
\codim_\RR \bun^H({\cal R}_{c,d}) = 0.
\end{equation*}
\end{theorem}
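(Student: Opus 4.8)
The plan is to deduce the bundle codimension directly from the orbit codimension already computed in this subsection, namely $\codim_\RR \orb^H({\cal R}_{c,d}) = n$, together with the definition (recalled at the end of the preamble to Section \ref{codim_sec}, following Arnold) that the codimension over $\RR$ of a Hermitian bundle equals the codimension over $\RR$ of the corresponding Hermitian orbit minus the number of real parameters along which the distinct finite eigenvalues are free to move inside the bundle. So the whole argument reduces to making that eigenvalue count precise for ${\cal R}_{c,d}$.

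First I would enumerate the distinct finite eigenvalues of ${\cal R}_{c,d}(\la)$: these are the $n-2d$ real numbers $a_1,\dots,a_{n-2d}$ and the $d$ non-real numbers $\mu_1,\dots,\mu_d$ (each appearing together with its conjugate $\overline{\mu_j}$). Varying a real eigenvalue $a_i$ within $\RR$ is one real degree of freedom, whereas varying a conjugate pair $(\mu_j,\overline{\mu_j})$ — equivalently, varying $\mu_j$ in the open upper half-plane — is two real degrees of freedom. Hence the quantity to be subtracted is $(n-2d)\cdot 1 + d\cdot 2 = n$, and therefore
$$
\codim_\RR\bun^H({\cal R}_{c,d}) = \codim_\RR\orb^H({\cal R}_{c,d}) - n = n - n = 0,
$$
independently of the values of $c$ and $d$.

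The only point demanding a little care — and it is bookkeeping rather than a real obstacle — is to use consistently the same convention for conjugate pairs in both halves of the argument: in the orbit-codimension computation each block $\cJ_1^H(\mu_j,\overline{\mu_j})$ already contributed $2$ (not $1$) to $\dim_\RR V(A,B)$ through $\syst(\cJ_1^H(\mu_j,\overline{\mu_j}))$, precisely because of the two real parameters $\re\mu_j,\im\mu_j$, and the very same pair must then count as $2$ in the eigenvalue count. Since these two appearances of "$2$" cancel block by block, and the real eigenvalues contribute "$+1$" to $\codim_\RR\orb^H$ and "$-1$" to the eigenvalue count, the net codimension collapses to $0$ for every admissible pair $(c,d)$, which is exactly the assertion of the theorem.
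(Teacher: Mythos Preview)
Your argument is correct and is essentially the paper's own proof: the paper computes $\codim_\RR \orb^H({\cal R}_{c,d}) = n$ in this subsection and then invokes the Arnold-type definition (orbit codimension minus the number of distinct eigenvalues) to obtain the bundle codimension $0$. Your only addition is spelling out the count $(n-2d)\cdot 1 + d\cdot 2 = n$ explicitly, which is exactly the ``number of different eigenvalues of $\cH$'' the paper's definition refers to (counting $\mu_j$ and $\overline{\mu_j}$ separately); note, however, that your heuristic linking the contribution $2$ from $\syst(\cJ_1^H(\mu_j,\overline{\mu_j}))$ to the two real parameters $\re\mu_j,\im\mu_j$ is a mnemonic rather than a derivation---the $2$ actually arises because the solution $x_{11}\in\CC$ is one free complex parameter.
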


In contrast with the singular case, considered in Section \ref{sec62} below, $\codim_\RR \orb^H({\cal R}_{c,d})$ and $\codim_\RR \bun^H({\cal R}_{c,d})$ do not depend on the values of $c$ or $d$.

 \subsection{Codimension of generic singular bundles with bounded rank}
 \label{sec62}
 In this subsection  we obtain the codimensions of the generic bundles $\bun^H(\cK_{c,d})$ in Theorem \ref{main_th}.
 We want to emphasize that, as a consequence of Theorem \ref{codim_th}, the generic bundles in Theorem \ref{main_th} have different codimension whenever $d\neq d'$ (but those for which $d=d'$ and $c\neq c'$ have the same codimension). Thus, the codimension of the generic bundles does not depend on the sign characteristic.

\begin{theorem}\label{codim_th}
The codimension in $\pen_{n\times n}^H$ of the Hermitian $n\times n$ bundle of generic Hermitian matrix pencils in Theorem~{\rm\ref{main_th}} is
\begin{equation*}
\begin{aligned}
\codim_\RR \bun^H({\cal K}_{c,d}) &= 2(n-d)(n-r).
\end{aligned}
\end{equation*}
\end{theorem}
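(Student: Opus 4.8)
The plan is to compute $\codim_\RR \orb^H(\cK_{c,d})$ and then invoke the identity $\codim_\RR \bun^H(\cK_{c,d}) = \codim_\RR \orb^H(\cK_{c,d}) - (\text{number of distinct eigenvalues of }\cK_{c,d})$. Since $\cK_{c,d}$ in \eqref{max} has no blocks $\cJ_k^H(\infty)$ and no blocks $\cJ_k^H(\mu,\overline\mu)$, its only eigenvalues are the $r-2d$ distinct real numbers $a_1,\dots,a_{r-2d}$, so it is enough to prove $\codim_\RR \orb^H(\cK_{c,d}) = 2(n-d)(n-r) + (r-2d)$. By Theorem~\ref{codimtodim} and the decoupling of system \eqref{syst} into the elementary systems $\syst(\cdot)$ and $\syst(\cdot,\cdot)$ described there, $\codim_\RR \orb^H(\cK_{c,d})$ is the sum of the real dimensions of the solution spaces of $\syst(\cC)$ over all canonical blocks $\cC$ of $\cK_{c,d}$, plus the real dimensions of the solution spaces of $\syst(\cC,\cC')$ over all unordered pairs of distinct blocks.

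The blocks of $\cK_{c,d}$ are the $n-r$ blocks $\cM_e$ (of which $s$ have index $\alpha+1$ and $n-r-s$ have index $\alpha$, where $d=(n-r)\alpha+s$) together with the $r-2d$ blocks $\pm\cJ_1(a_i)$. From Section~\ref{codimreg}, the solution space of $\syst(\sigma\cJ_1(a))$ has real dimension $1$ and that of $\syst(\sigma_i\cJ_1(a_i),\sigma_j\cJ_1(a_j))$ has real dimension $0$ for $a_i\neq a_j$, so the $\cJ_1$-blocks contribute $r-2d$ and pairs of such blocks contribute nothing. For a mixed pair $(\cM_e,\sigma\cJ_1(a))$ I would solve $\syst(\cM_e,\sigma\cJ_1(a))$ directly: writing $\cM_e=A+\lambda B$ and eliminating $Z$ from the two equations yields $(A+aB)Y=\cM_e(a)\,Y=0$, and since $\cM_e(a)$ is a real $(2e+1)\times(2e+1)$ matrix of rank $2e$ for every $a\in\RR$, its kernel has real dimension $2$; hence $\syst(\cM_e,\sigma\cJ_1(a))$ has solution space of real dimension $2$, independently of $\sigma$ and $a$, and the $(n-r)(r-2d)$ pairs of this kind contribute $2(n-r)(r-2d)$.

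What remains --- and here the Hermitian structure genuinely enters --- is to evaluate $\dim_\RR$ of the solution spaces of $\syst(\cM_e)$ and of $\syst(\cM_e,\cM_f)$. For the cross-term, the system $\syst(\cM_e,\cM_f)$ has real coefficients and coincides with the one occurring in the symmetric case, so its complex solution space has real dimension equal to twice the complex dimension computed (for symmetric pencils) in \cite{ddd20}. For the diagonal term, I would write the unknown as $X=X_1+{\rm i}X_2$ with $X_1,X_2$ real: the vanishing of $X^*\cM_e+\cM_e X$ then splits into the real system $X_1^\top A+AX_1=0$, $X_1^\top B+BX_1=0$ (whose real dimension equals the complex dimension of the corresponding symmetric diagonal system in \cite{ddd20}) and the real ``symmetrizer'' system $AX_2=X_2^\top A$, $BX_2=X_2^\top B$. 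The symmetrizer space is exactly the $(-1)$-eigenspace of the involution $(Y,Z)\mapsto(Z^\top,Y^\top)$ on the solution space of $\syst(\cM_e,\cM_e)$ (whose $(+1)$-eigenspace is the symmetric diagonal space), so $\dim_\RR$ of the solution space of $\syst(\cM_e)$ equals the complex dimension of the solution space of $\syst(\cM_e,\cM_e)$; this dimension --- or, equivalently, the symmetrizer dimension computed directly from the explicit shapes of $A_{\cM_e}$ and $B_{\cM_e}$ --- is a small polynomial in $e$ available from \cite{ddd20}.

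Finally I would assemble everything. In the $\cM$-block part the diagonal terms are counted once (as $\syst(\cM_{e_i})$) and the off-diagonal ones twice (the factor $2$ from complexification), so this part equals the full double sum over ordered pairs of $\cM$-blocks of the complex dimensions of the solution spaces of $\syst(\cM_{e_i},\cM_{e_j})$; inserting the block sizes, using $d=(n-r)\alpha+s$, and carrying out the summation (slightly delicate because of the two block sizes $\alpha$ and $\alpha+1$), this collapses to $2(n-r)(n-r+d)$. Adding $r-2d$ from the $\cJ_1$-blocks and $2(n-r)(r-2d)$ from the $\cM$--$\cJ_1$ pairs gives $\codim_\RR\orb^H(\cK_{c,d}) = 2(n-r)(n-r+d) + 2(n-r)(r-2d) + (r-2d) = 2(n-r)(n-d) + (r-2d)$, and subtracting $r-2d$ yields $\codim_\RR\bun^H(\cK_{c,d}) = 2(n-d)(n-r)$. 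The main obstacle is the explicit determination of the solution-space dimensions of $\syst(\cM_e)$ and $\syst(\cM_e,\cM_f)$ for arbitrary $e,f$ (the real/imaginary splitting reduces this to the minimal-index computations of \cite{ddd20}, but one must still carry them out and track the real-versus-complex factors), together with the combinatorial bookkeeping in the final sum.
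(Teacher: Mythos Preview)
Your proposal is correct and shares the paper's overall architecture: decouple \eqref{syst} into the elementary systems $\syst(\cdot)$ and $\syst(\cdot,\cdot)$ for the canonical blocks of $\cK_{c,d}$, evaluate each, sum, and subtract the $r-2d$ distinct eigenvalues. The execution differs in two places, both to your advantage. For $\syst(\cM_e)$ the paper performs an explicit entry-by-entry computation of the solution matrix $X$ (arriving at $\dim_\RR = 2e+2$), whereas your real/imaginary splitting $X = X_1 + \mathrm{i}X_2$ together with the involution $(Y,Z)\mapsto(Z^\top,Y^\top)$ on the real solution space of $\syst(\cM_e,\cM_e)$ identifies the Hermitian diagonal dimension directly with the complex dimension of that self-system, bypassing all matrix bookkeeping; this is a clean conceptual reduction that works precisely because the coefficients of $\cM_e$ are real symmetric. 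Second, your observation that the total $\cM$-block contribution therefore becomes the full ordered double sum $\sum_{i,j}\dim_\CC \syst(\cM_{e_i},\cM_{e_j})$ makes the summation collapse almost immediately to $2(n-r)(n-r+d)$, while the paper routes the same count through Tables~\ref{syst_table}--\ref{systpair_table} and a longer chain of algebraic simplifications. The paper's approach buys, as a by-product, the explicit shape of the solutions of $\syst(\cM_e)$; and for the off-diagonal dimensions it cites \cite{DmKS14} (where $\syst(\cM_{m_i},\cM_{m_j})$ is actually solved) rather than \cite{ddd20}, which you may want to adjust.
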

\begin{proof}
 We are first going to compute the codimension of $\orb^H({\cal K}_{c,d})$. By Theorem~\ref{codimtodim}, this codimension is equal to the dimension of the solution space of \eqref{syst}, with ${\cal K}_{c,d}(\la)=A+\la B$.
By the arguments after the proof of Theorem~\ref{codimtodim}, we need to obtain the dimension of the solution spaces of $\syst(A_i+\la B_i)$ and $\syst(A_i+\la B_i,A_j+\la B_j)$, where $A_i+\la B_i$ and $A_j+\la B_j$ are the canonical blocks appearing in ${\cal K}_{c,d}(\la)$.

We recall that the dimension of  the solution space of $\syst (\sigma \cJ_1(a))$ for $a\in \RR$ is computed in Section \ref{codimreg} and is equal to $1$.

Now we consider the system $\syst(\cM_k)$:
\begin{equation*}
\begin{aligned}
\begin{bmatrix}
X_{11}^*&X_{21}^*
 \\ X_{12}^*&X_{22}^*
\end{bmatrix}\begin{bmatrix}
0&F_k^\top \\
F_k&0
\end{bmatrix}+\begin{bmatrix}
0&F_k^\top \\
F_k&0
\end{bmatrix}\begin{bmatrix}
X_{11}&X_{12}
 \\ X_{21}&X_{22}
\end{bmatrix}=
     \begin{bmatrix}
0& 0 \\ 0& 0
\end{bmatrix}, \\
\begin{bmatrix}
X_{11}^*&X_{21}^*
 \\ X_{12}^*&X_{22}^*
\end{bmatrix}\begin{bmatrix}
0&G_k^\top \\
G_k&0
\end{bmatrix}+\begin{bmatrix}
0&G_k^\top \\
G_k&0
\end{bmatrix}\begin{bmatrix}
X_{11}&X_{12}
 \\ X_{21}&X_{22}
\end{bmatrix}=
     \begin{bmatrix}
0& 0 \\ 0& 0
\end{bmatrix},
\end{aligned}
\end{equation*}
where $X$ is partitioned conformally with the $2 \times 2$ block structure of $\cM_k.$ Note that the conjugation of $X$ is the only difference compared to the case described in \cite[Section 3.2]{DmKS14}. Multiplying the matrices we have
\begin{equation} \label{lnsyst}
\begin{aligned}
\begin{bmatrix}
X_{21}^*F_k+F_k^\top X_{21}&
X_{11}^*F_k^\top+F_k^\top X_{22}\\
X_{22}^*F_k+F_kX_{11}&
X_{12}^*F_k^\top+F_kX_{12}
\end{bmatrix} =
\begin{bmatrix}
0& 0 \\ 0& 0
\end{bmatrix}, \\
\begin{bmatrix}
X_{21}^*G_k+G_k^\top X_{21}&
X_{11}^*G_k^\top+G_k^\top X_{22}\\
X_{22}^*G_k+G_kX_{11}&
X_{12}^*G_k^\top+G_kX_{12}
\end{bmatrix} =
\begin{bmatrix}
0& 0 \\ 0& 0
\end{bmatrix}.
\end{aligned}
\end{equation}
Since the pairs of blocks at positions $(1,2)$ and $(2,1)$ are equal to each other up to transposition and conjugation, equation \eqref{lnsyst} decomposes into three independent subsystems.
Let us first consider the subsystem corresponding to the $(1,1)$-blocks:
\begin{equation} \label{systl}
\begin{aligned}
X_{21}^*F_k+F_k^\top X_{21}=0, \\
X_{21}^*G_k+G_k^\top X_{21}=0.
\end{aligned}
\end{equation}
In order to satisfy the second equation of \eqref{systl}, $X_{21}$ must have the form
$$X_{21}=\begin{bmatrix}
{\rm i} b_{11}&x_{12}&x_{13}&\ldots & x_{1k}&0 \\
-\overline{x_{12}}&{\rm i} b_{22}&x_{23}&\ldots & x_{2k}&0 \\
-\overline{x_{13}}&-\overline{x_{23}}&{\rm i} b_{33}&\ldots & x_{3k}&0 \\
\vdots&\vdots&\vdots&\ddots & \vdots & \vdots \\
-\overline{x_{1k}}&-\overline{x_{2k}}&-\overline{x_{3k}}&\ldots & {\rm i} b_{kk}&0 \\
\end{bmatrix},$$
 with $b_{11},\hdots,b_{kk}\in\RR$.
Substituting $X_{21}$ in the first equation of \eqref{systl},
we obtain
\begin{equation}\label{systll}
 \begin{bmatrix}
0&  - {\rm i} b_{11}& -x_{12}& \ldots &  -x_{1,k-1}& -x_{1k} \\
{\rm i} b_{11}& \overline{x_{12}} + x_{12} &-{\rm i} b_{22}- x_{13}&\ldots & x_{1k}-x_{2,k-1}& -x_{2k}\\
-\overline{x_{12}}& {\rm i} b_{22}+\overline{x_{13}}& \overline{x_{23}} + x_{23} &\ldots& x_{2k}-x_{3,k-1}& -x_{3k}\\
\vdots&\vdots&\vdots&\ddots&\vdots&\vdots\\
-\overline{x_{1,k-1}}& \overline{x_{1k}}-\overline{x_{2,k-1}}& \overline{x_{2k}}-\overline{x_{3,k-1}}&\ldots & \overline{x_{k-1,k}} + x_{k-1,k} & -{\rm i} b_{kk}\\
-\overline{x_{1k}} & -\overline{x_{2k}}&-\overline{x_{3k}} &\ldots & {\rm i} b_{kk}& 0
\end{bmatrix}=0.
\end{equation}
 The first row of \eqref{systll} gives $b_{11}=0=x_{12}=\cdots=x_{1k}$. Replacing this into the equations obtained from the upper diagonal entries in the second row of \eqref{systll} gives $b_{22}=0=x_{23}=\cdots=x_{2k}$. Proceeding recursively in this way, we conclude that $X_{21}=0$.

Now consider the subsystem corresponding to the $(2,2)$-blocks:
\begin{equation} \label{systl1}
\begin{aligned}
X_{12}^*F_k^\top+F_kX_{12}=0, \\
X_{12}^*G_k^\top+G_kX_{12}=0.
\end{aligned}
\end{equation}
In order to satisfy the second equation of \eqref{systl1}, $X_{12}^*$ must have the form
$$X_{12}^*=\begin{bmatrix}
 {\rm i} b_{11}&x_{12}&x_{13}&\ldots & x_{1k}& x_{1,k+1}\\
-\overline{x_{12}}& {\rm i} b_{22}&x_{23}&\ldots & x_{2k}& x_{2,k+1}\\
-\overline{x_{13}}&-\overline{x_{23}}& {\rm i} b_{33}&\ldots & x_{3k}& x_{3,k+1}\\
\vdots&\vdots&\vdots&\ddots & \vdots & \vdots\\
-\overline{x_{1k}}&-\overline{x_{2k}}&-\overline{x_{3k}}&\ldots &  {\rm i} b_{kk}& x_{k,k+1}\\
\end{bmatrix},$$
with $b_{11},\hdots,b_{kk}\in\RR$.
Replacing it in the first equation of \eqref{systl1}, we obtain
\begin{equation*}
 \begin{bmatrix}
\overline{x_{12}} + x_{12} & x_{13} - {\rm i} b_{22}  &x_{14}-x_{23}&\ldots & x_{1,k+1}-x_{2k}\\
\overline{x_{13}} + {\rm i} b_{22} & x_{23}  + \overline{x_{23}} &x_{24}-{\rm i}b_{33} &\ldots & x_{2,k+1}-x_{3k}\\
\vdots&\ddots&\ddots & \ddots & \vdots\\
 \overline{x_{1k}}-\overline{x_{2,k-1}}& \overline{x_{2k}}-\overline{x_{3,k-1}}&\ldots& x_{k-1,k} + \overline{x_{k-1,k}}& x_{k-1,k+1} - {\rm i} b_{kk} \\
 \overline{x_{1,k+1}}-\overline{x_{2k}}&\overline{x_{2,k+1}}-\overline{x_{3k}} &\ldots & \overline{x_{k-1,k+1}}+ {\rm i} b_{kk} & x_{k,k+1}  + \overline{x_{k,k+1}}
\end{bmatrix}=0.
\end{equation*}
Equating the diagonal entries of this identity gives $x_{i,i+1}+\overline{x_{i,i+1}}=0$, for $i=1,\hdots,k$, which implies that $x_{i,i+1}={\rm i}b_{i,i+1}$, with $b_{i,i+1}\in\RR$. Equating the upper diagonal entries in turn gives $x_{i,j}=x_{i+1,j-1}$, for $j\geq i+2$.
Therefore, $X_{12}^* =  {\rm i} [b_{ij}]$ is a Hankel matrix (namely, each skew diagonal is constant) and $b_{ij} \in \RR$. 

 Finally, using similar arguments to the ones in \cite[Section 3.2]{DmKS13} (replacing $\top$ by $*$ in that reference leads to the same solution), the solution of the off-diagonal subsystem
\begin{equation*}
\begin{aligned}
X_{22}^*F_k+F_kX_{11}=0, \\
X_{22}^*G_k+G_kX_{11}=0,
\end{aligned}
\end{equation*}
is  $X_{11}= \alpha I_{k+1}$ and $X_{22}= - \overline{\alpha} I_{k}$,
with $\alpha \in \CC$.

Summing up, the solution of system \eqref{lnsyst} is
$$X=\begin{bmatrix}
 \alpha I_{k+1}&X_{12}\\
0_{k,k+1}& - \overline{\alpha} I_{k}
\end{bmatrix},$$
 where, as we have seen, $X_{12}^* =  {\rm i} [b_{ij}]$ is a Hankel matrix and $b_{ij} \in \RR$. The number of independent real parameters of the matrix $X$ above is $2k+2$, namely $2k$ coming from $X_{12}$ and $2$ coming from $\alpha\in\CC$. Hence the dimension over $\RR$ of the solution space of $\syst(\cM_k)$ is $2k+2$.

Now we compute the dimension of the solution space of $\syst (A_i + \lambda B_i,\, A_j + \lambda B_j)$ for $A_i + \lambda B_i, A_j + \lambda B_j \in \{ \sigma  \cJ_1(a), \cM_k  \}$, with $a \in \RR$.
The system $\syst (\sigma_i  \cJ_1(a_i ),\, \ \sigma_j  \cJ_1(a_j))$ was handled in Section \ref{codimreg} and the dimension of its solution space is equal to $0$. The dimension of the solution of $\syst (\sigma\cJ_1(a),\, \cM_k)$, and $\syst (\cM_{m_i},\, \cM_{m_j})$ follows directly from the dimension of the corresponding systems in \cite[Corollary 2.2]{DmKS14}, see also \cite[Corollary 2.1]{Dmyt19} and \cite[Theorem 2.7]{DmJK13}. Namely, the dimension of
$\syst (\cJ_1(a),\, \cM_k)$ is equal to $2$ (from here, we can conclude that the dimension of the solution space of $\syst (- \cJ_1(a),\, \cM_k)$ is also equal to $2$, using the change of variables $Y'=-Y$); and as for $\syst (\cM_{m_i},\, \cM_{m_j})$ the dimension is equal to $2\cdot(2 \max \{ m_i,m_j \} + \varepsilon_{ij})$, where $\varepsilon_{ij} = 2$ if $m_i=m_j$ and $\varepsilon_{ij} = 1$ otherwise. Note that, for the generic Hermitian pencils in Theorem \ref{main_th}, $m_i,m_j\in\{\alpha,\alpha+1\}$.

 We summarize in Table \ref{syst_table} the dimension of the solution space of $\syst(\cH)$ and in Table \ref{systpair_table} the dimension of the solution space of the systems $\syst(\cH_1,\cH_2)$, with $\cH$, $\cH_1$, and $\cH_2$ being all possible pairs of blocks in ${\cal K}_{c,d}(\la)$. Each entry of the table contains the dimension of the solution spaces of all systems obtained from the corresponding blocks, so it is the product of the dimension obtained with the precedent arguments for each system multiplied by the number of blocks of each kind in ${\cal K}_{c,d}(\la)$. The lower diagonal entries in Table \ref{systpair_table} are not considered to avoid repetitions.

\begin{table}[]
    \centering
    \begin{tabular}{c|c}
         $\cH$&$ \sum \dim(\syst(\cH))$  \\\hline\hline
        $\sigma\cJ_1(a)$ &$r-2d$\\
        $\cM_\alpha$&$(2\alpha+2)(n-r-s)$\\
        $\cM_{\alpha+1}$&$(2\alpha+4)s$
    \end{tabular}
    \caption{Sum of dimensions of the solution spaces of $\syst(\cH)$, for all blocks of each of the forms $\cH$ in ${\cal K}_{c,d}(\la)$}
    \label{syst_table}
\end{table}
\begin{table}[]
    \centering
    \begin{tabular}{c|ccc}
         \diagbox[]{$\cH_1$}{$\cH_2$}& $\sigma\cJ_1(a_i)$&$\cM_\alpha$&$\cM_{\alpha+1}$ \\\hline
        $\sigma\cJ_1(a_i)$ &$0$&$2(r-2d)(n-r-s)$&$2(r-2d)s$\\$\cM_\alpha$&--&$2(2\alpha+2)\binom{n-r-s}{2}$&$2(2\alpha +3)(n-r-s)s$\\
        $\cM_{\alpha+1}$&--&--&$2(2\alpha+4)\binom{s}{2}$
    \end{tabular}
    \caption{Sum of dimensions of the solution spaces of $\syst(\cH_1,\cH_2)$, for all blocks of each of the forms $\cH_1,\cH_2$ in ${\cal K}_{c,d}(\la)$}
    \label{systpair_table}
\end{table}


Summing up the dimensions of the solution spaces for all the subsystems we obtain the following codimension of $\orb^H({\cal K}_{c,d})$:

\begin{equation*} \label{codimcomporb}
\begin{aligned}
\codim_\RR \orb^H({\cal K}_{c,d}) &\phantom{a} = r-2d+2(\alpha +2)s + 2(\alpha +1)(n-r-s) + 2(n-r)(r-2d)  \\ 
&\phantom{a}  + 2(n-r-s)(n-r-s-1)(2\alpha +2)/2 + 2s(s-1)(2\alpha+4)/2 \\
&\phantom{a}  + 2s(n-r-s)(2(\alpha+1) +1)\\
&\phantom{a} = r-2d+2(\alpha(n-r) +n -r+s) + 2(n-r)(r-2d)\\
&\phantom{a} + 2(n-r-s)(n-r-s-1)(\alpha +1) + 2s(s-1)(\alpha+1)  + 2s(s-1) \\
&\phantom{a} + 4s(n-r-s)(\alpha+1) + 2s(n-r-s) \\
&\phantom{a} = r-2d + 2(d + n - r) + 2(n-r)(r-2d)\\
&\phantom{a} + 2(\alpha +1) ((n-r-s)(n-r-1) + s(n-r-1)) + 2s(n-r-1) \\
&\phantom{a} = r-2d + 2(d + n - r) + 2(n-r)(r-2d) \\
&\phantom{a} + 2(\alpha +1) (n-r)(n-r-1) + 2s(n-r-1) \\
&\phantom{a} = r-2d + 2(d + n - r) + 2(n-r)(r-2d) + 2(d+n-r)(n-r-1)  \\
&\phantom{a} = r - 2d + 2(n-d)(n-r),
\end{aligned}
\end{equation*}
 where in the third and fifth identities we have used that $\alpha(n-r)+s=d$.
As a consequence, the codimensions of the generic bundles are:
\begin{equation*}
\begin{aligned}
\codim_\RR \bun^H({\cal K}_{c,d}) &= r - 2d + 2(n-d)(n-r) - r + 2d = 2(n-d)(n-r).
\end{aligned}
\end{equation*}
\end{proof}

We want to emphasize that the generic bundle with smallest codimension (namely, with largest dimension) is the one with largest $d$, namely the one having the smallest number of eigenvalues (this number is equal to $0$ if $r$ is even and to $1$ if $r$ is odd).

\section{Numerical illustration of the theoretical results}\label{experiments_sec}

Despite this paper is of a theoretical nature, we provide a couple of numerical experiments to illustrate and support the main results (namely, Theorems \ref{regular_th} and \ref{main_th}). The {\sc Matlab} code for these experiments is available on GitHub.\footnote{\href{https://github.com/dmand/generic_herm_experiments.git}{https://github.com/dmand/generic\_herm\_experiments.git}}

\begin{example}\label{ex:reggen}
The purpose of this experiment is to show that all the generic complete eigenstructures of
regular Hermitian matrix pencils in Theorem {\rm\ref{regular_th}} arise numerically when computing the eigenvalues of a simple family of randomly generated regular Hermitian matrix pencils.

Using {\rm\cite{Marc22}} we generate the matrix coefficients, i.e., $A$ and $B$, of a Hermitian matrix pencil $A + \lambda B$, and shift these matrix coefficients, by adding to each of the matrices a diagonal matrix with  the same value on the diagonal: $A + w_j I$ and $B + w_j I$. Then we compute the eigenvalues of $(A + w_jI) + \lambda (B+ w_jI)$ with the {\sc Matlab} function {\rm eig($A$,$B$)} and we count the number of real eigenvalues in the output.  
In Figure {\rm\ref{fig1}} we show the outcome after repeating this computation $350$ times for $20 \times 20$ Hermitian matrix pencils with the shifts $w_j = (j \log j)/100$. We see that the number of the real eigenvalues varies from $0$ to $20$ (the size of the pencils), and that all possible numbers (namely, all even numbers between $0$ and $20$) are attained. However, as $j$ increases, there is a larger number of real eigenvalues. This is the expected output, since the diagonal entries of the coefficients matrices are increasing, while the size of all non-diagonal entries remains the same.

\begin{figure}[h!]
\label{fig1}
  \centering
      \includegraphics[width=0.7\textwidth]{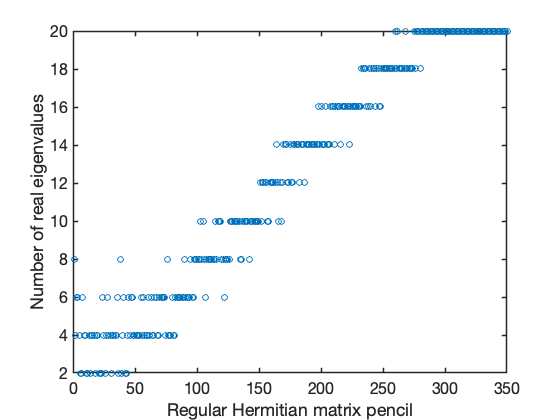}
  \caption{Number of real eigenvalues for $20 \times 20$ regular random Hermitian matrix pencils $(A + w_jI) + \lambda (B+ w_jI)$, where $w_j = (j \log j)/100, \  j =1, \dots,  350$.}
\end{figure}

\end{example}

\begin{example}\label{ex:reggen2}
The purpose of this experiment is to show that singular Hermitian matrix pencils generically do not have pairs of complex conjugate eigenvalues. We generate Hermitian matrix pencils of a given rank $r$ using the result of {\rm\cite[Theorem~2]{dmm22}} and compute their eigenvalues using the {\rm MultiParEig} Toolbox for {\sc Matlab} {\rm\cite{Ples22}}, see also {\rm\cite{HoMP19}}. In extensive set of experiments we have never seen a pencil with a pair of complex conjugate eigenvalues. For example, after running $50 000$ experiments with $17 \times17$ Hermitian matrix pencils of rank $9$, we get $9,7,5,3$, or $1$ eigenvalues (all real) and no non-real eigenvalues.
\end{example}

\section{Conclusions}\label{conclusion_sec}
 We have proved that the set of complex Hermitian $n\times n$ matrix pencils with rank at most $r$ (with $r\leq n$) is the union of a finite number of closed sets, which are the closures of the bundles of certain pencils. These pencils are given explicitly in Hermitian Kronecker canonical form, namely explicitly displaying their complete eigenstructure. Hence, these are the generic complete eigenstrustures of Hermitian $n\times n$ matrix pencils with rank at most $r$, and the corresponding bundles are the generic bundles. The case when $r=n$ is addressed separately, because it provides the generic eigenstructures of general $n\times n$ Hermitian pencils (without any rank constraint). In this case, all except one of the generic eigenstructures contain non-real eigenvalues. However, this is not the case when $r<n$, where the generic eigenstructures can only contain real eigenvalues (if any).

  We have provided the number of generic bundles, which is larger than $1$. Finally, we have obtained the (co)dimension of the generic bundles.

\bigskip

\noindent{\bf Acknowledgements.} The work of F. De Ter\'an and F. M. Dopico has been partially supported by the Agencia Estatal de Investigaci\'on of Spain through grant PID2019-106362GB-I00 MCIN/AEI/ 10.13039/501100011033/ and by the Madrid Government (Comunidad de Madrid-Spain) under the Multiannual Agreement with UC3M in the line of Excellence of University Professors (EPUC3M23), and in the context of the V PRICIT (Regional Programme of Research and Technological Innovation). The work of A. Dmytryshyn has been supported by the Swedish Research Council (VR) under grant 2021-05393.

\bibliographystyle{siamplain}

\end{document}